\def\SMALLO{\mathcal{o}}
\def\DELTAT{{\delta t}}
\def\NUMSP{{K}}                       
\def\DERIV{\delta}
\def\DX{\DERIV_x}
\def\DXOMEGA{\DERIV_{x,\omega}}
\def\DY{\DERIV_y}
\def\DZ{\DERIV_z}
\def\DXY{\DERIV_{xy}}
\def\DYX{\DERIV_{yx}}
\def\DXZ{\DERIV_{xz}}
\def\DYZ{\DERIV_{yz}}
\def\DXYZ{\DERIV_{xyz}}
\def\DXK#1{\DERIV_{x_{#1}}}
\def\DIST{\mathrm{dist}\,}
\def\INDXE{\mathcal{I}^1}
\def\INDXO{\mathcal{I}^2}
\def\BNDRY{\partial}
\def\CLSR{\bar}
\def\DIAM{\mathrm{diam}\,}
\def\RANG_S{S}                            
\def\LATT{{\Lambda}_N}                    
\def\SIGMA{{\mathcal{S}}}               
\def\LATTA{\LATT^1}
\def\LATTB{\LATT^2}
\def\CSET{C}
\def\CB{C_b}                     
\def\CBS{\CB({\SIGMA})}
\def\CS#1{C^{#1}}                
\def\CSS#1{\CS{#1}({\SIGMA})}
\def\NORMB#1{\Vert\,#1\,\Vert_\infty}
\def\NORM#1{\NORMB{#1}}
\def\NORMINF#1{\|{#1}\|_\infty}
\def\NORMA#1#2{\|{#2}\|_{#1}}
\def\ISINGOP#1#2{\sum_{x\in #1}c(x,\sigma)[#2(\sigma^x)-#2(\sigma)]}
\def\RATEC{c(x,\omega;\sigma)}    
\def\SEMIGRPt{\EXP{t\LOPER}} 
\def\SEMIGRP{P}                             
\def\SEMIGRPL{\SEMIGRP_{L}}
\def\SEMIGRPS{\SEMIGRP_{S}}
\def\SEMIGRPB{\tilde\SEMIGRP}
\def\SEMIGRPLT{\SEMIGRP_{L}(t)}
\def\SEMIGRPST{\SEMIGRP_{S}(t)}
\def\omegax{{x, \omega}}
\def\SPINSP{\Sigma}                     
\def\CONFNEW{\sigma^{x,\omega}}
\def\CONFNEWX{\sigma^{x,\omega}}
\def\PROCAPPRGAM{\{\gamma_{kh}\}_{k=0}^{n}}
\def\PROCMIC{\{\sigma_t\}_{t\geq 0}}
\def\PROC#1{\{{#1}\}_{t\geq 0}}
\def\CUBE{C}
\def\LOPER{\mathcal{L}}
\def\LOPA{\LOPER_1}
\def\LOPB{\LOPER_2}
\def\LOPAm{\LOPER_{1,m}}
\def\LOPBm{\LOPER_{2,m}}
\def\DOPER{\mathcal{D}}
\def\Oo{\mathcal{O}}
\def\N{\mathbb{N}}
\def\R{\mathbb{R}}
\def\Z{\mathbb{Z}}
\def\DT{h}
\def\EXP#1{e^{#1}}
\def\EXPECT{{\mathbb{E}}}
\def\PROB#1{{\mathbb{P}\left({#1}\right)}}
\def\SPACE{\;\;\;}          
\def\COMMA{\,,}             
\def\PERIOD{\,.}            
\def\SEP{{\,|\,}}           
\def\VIZ#1{(\ref{#1})}      
\def\BIGO{\Oo}
\def\SMALLO{\mathrm{o}}
\newtheorem{remark}{Remark}[section]
\newtheorem{example}{Example}[section]
\title{%
  Parallelization, processor communication and error analysis in lattice kinetic Monte Carlo\thanks{%
   The research of G.A. was partially supported by the National Science Foundation under the grant
   NSF-DMS-071512 and NSF-CMMI-0835582.
   The research of M.A.K. was partially supported by  the Office of Advanced Scientific Computing Research,
   U.S. Department of Energy under DE-SC0002339 and the European Commission FP7-REGPOT-2009-1 Award No 245749.
   The research of P.P. was partially supported by the National Science Foundation under the grant
   NSF-DMS-0813893 and by the Office of Advanced Scientific Computing Research,
   U.S. Department of Energy under DE-SC0001340.
                              }
}
\author{%
Giorgos Arampatzis\thanks{%
Department of Applied Mathematics,
University of Crete and  Foundation of Research and Technology-Hellas, Greece,
{\tt garab@math.uoc.gr}}
\and
Markos A. Katsoulakis\thanks{%
Department of Mathematics and Statistics, University of Massachusetts,
Amherst, MA 01003--9305, USA,
{\tt markos@math.umass.edu}}
\and
Petr Plech\'a\v{c}\thanks{%
Department of Mathematical Sciences, University of Delaware, Newark, DE 19716, USA,
{\tt plechac@math.udel.edu}}
}
\begin{document}

\maketitle

\begin{abstract}
In this paper we study from a numerical analysis perspective the Fractional Step Kinetic Monte Carlo (FS-KMC) 
algorithms proposed in \cite{AKPTX} for the parallel simulation
of spatially distributed particle systems on a lattice. 
FS-KMC  are fractional step algorithms  with a time-stepping  window $\Delta t$, and as such they
are inherently {\em partially asynchronous} since there is no processor communication during the
period $\Delta t$.
In this contribution we  primarily focus on the error analysis of FS-KMC algorithms as approximations of
conventional, serial kinetic Monte Carlo (KMC).
A key aspect of our analysis relies on emphasizing a goal-oriented approach for suitably defined macroscopic observables (e.g.,
density, energy, correlations, surface roughness), rather than 
focusing on strong topology estimates for individual trajectories.

One of the key  implications of  our error analysis is that it allows us to address systematically the processor communication of
different parallelization strategies for KMC by  comparing their (partial) asynchrony, which in turn is measured by their respective  
fractional time step $\Delta t$ for a prescribed  error tolerance.
\end{abstract}

\begin{keywords}
Kinetic Monte Carlo method,  parallel algorithms, Markov semigroups, operator splitting, partially asynchronous algorithms,
Graphical Processing Unit (GPU)
\end{keywords}

\begin{AMS}
65C05, 65C20, 82C20, 82C26
\end{AMS}

\section{Introduction}

The simulation of stochastic lattice systems using kinetic Monte Carlo (KMC) methods relies on the direct numerical simulation of the underlying 
Continuous Time Markov Chain (CTMC).
In \cite{AKPTX} we proposed  a new mathematical and computational framework for constructing parallel algorithms for KMC simulations.%
The  parallel algorithms in \cite{AKPTX} are controlled approximations of Kinetic Monte Carlo
algorithms, and  rely on first  developing 
a spatio-temporal decomposition of  the Markov operator for the underlying CTMC   into  a hierarchy of operators 
corresponding  to the particular   parallel architecture.  Based on this operator decomposition, we  formulated  
{\it Fractional Step Approximation schemes}
by employing the Trotter product formula, which in turn determines the processor {\em communication schedule}. 
The fractional step framework  allows for a hierarchical structure to be easily formulated and implemented, offering  
a key  advantage for simulating on modern parallel architectures with elaborate memory and processor hierarchies. 
The resulting parallel algorithms are inherently {\em partially asynchronous}
as processors do not communicate during the fractional time step window $\Delta t$.

Earlier,  in \cite{ShimAmar05b} the authors also proposed an {\em approximate} algorithm,   
in order to create a parallelization scheme for KMC. It was  demonstrated in \cite{ShimAmar09, SPPARKS}, that %
boundary inconsistencies  are resolved in a straightforward fashion, while there is an absence of global communications. Finally, among the parallel algorithms  
tested in \cite{ShimAmar09}, the  one in 
 \cite{ShimAmar05b} had  
the highest parallel efficiency. In \cite{AKPTX}, we demonstrated that the approximate algorithm in
\cite{ShimAmar05b} is a special case of the  Fractional Step Approximation 
schemes introduced in \cite{AKPTX}.  There we also demonstrated,  using the Random Trotter Theorem  \cite{Kurtz},  that the algorithm in  \cite{ShimAmar05b} is {\em numerically consistent} 
in the approximation limit, i.e.,  as the time step in the fractional step scheme converges to zero, it   converges to a Markov Chain  that has the same 
master equation and generator as the original serial KMC.
The open source SPPARKS parallel Kinetic Monte Carlo simulator, \cite{SPPARKS}, can also be formulated as a Fractional Step approximation.
In this article, the convergence, reliability and efficiency of  all such Fractional Step KMC parallelization methods is systematically explored  by rigorous  numerical analysis 
which relies on controlled-error approximations  in transient regimes relevant to the  simulation of extended systems. 

A key aspect of 
the presented analysis relies on emphasizing a {\em goal-oriented} error approach  for suitably defined macroscopic observables, e.g.,
density, energy, correlations, surface roughness , 
giving rise to estimates which are independent of the (very large) system size of the particle system.
Besides the obtained numerical consistency and reliability of the approximating CTMC obtained from FS-KMC
there is  an additional key practical point: the bigger is the allowable $\Delta t$, within  a desired  error tolerance,   the less processor communication is required.
From a broader mathematical perspective, and driven from parallel computing challenges,  the  developed mathematical and  numerical analysis
attempts to balance between controlled error approximations and  processor communication.
The same methods could also prove useful for developing and evaluating  parallel numerical schemes  for other molecular  and extended systems.

\section{Background}

Kinetic Monte Carlo (KMC) algorithms have proved to be an important tool for the simulation of 
{\em non-equilibrium, spatially distributed} chemical processes arising in applications 
ranging from materials science, catalysis and reaction engineering, to complex biological processes. 
Typically the simulated models involve chemistry and/or transport 
micro-mechanisms for atoms and molecules, e.g.,
reactions, adsorption, desorption processes and diffusion on surfaces and through  porous media,  
\cite{binder, Auerbach, ACDV}. Furthermore, mathematically similar mechanisms and 
corresponding KMC simulations arise in {\em agent-based} models in epidemiology, ecology and traffic networks, \cite{Szabo}.

We consider an interacting particle system defined on a $d$-dimensional lattice $\LATT$.
Naturally, the simulations are performed on a finite lattice of the size $N$, however, given the size of real
molecular systems it is either necessary to treat the case $N\to\infty$, e.g., $\Lambda = \Z^d$,
or alternatively any numerical estimates we obtain need to be {\em independent of the system size} $N$.
We restrict our discussion to lattice gas models where the order parameter or the spin variable
takes values in a compact set, in most cases the set is finite  $\SPINSP=\{0,1,\dots, \NUMSP \}$.
At each lattice site $x\in \LATT$ an order parameter (a spin variable) $\sigma(x)\in \SPINSP$ 
is defined. The states in $\SPINSP$ correspond to occupation of the site $x\in\LATT$ by different
species. For example, if $\SPINSP=\{0,1\}$ the order parameter models the classical lattice gas with
a single species occupying the site $x$ when $\sigma(x)=1$ and with the site being vacant if $\sigma(x)=0$.
We denote $\PROCMIC$ the stochastic process with values in the countable configuration space  $\SIGMA=\SPINSP^{\LATT}$.
Microscopic dynamics is described by transitions (changes) of spin variables at different sites. 
We study systems in which the transitions are localized and involve only finite number of sites at each
transition step. First, the {\em local} dynamics is described by an updating mechanism and corresponding  transition rates
$c(x,\omega;\sigma)$ in \VIZ{rates0}, such that 
the configuration at time $t$,  $\sigma_t=\sigma$ changes into a new configuration $\CONFNEW$ by an
update in a neighborhood of  the site $x\in\LATT$. Here $\omega\in\SIGMA_x$, where $\SIGMA_x$ is the set of all possible
configurations that correspond to an update at a neighborhood $\Omega_x$ of the site $x$. For example, 
if the modeled process is 
a diffusion of the classical lattice gas a particle at $x$, i.e., $\sigma(x)$ can move to any unoccupied 
nearest neighbor $y$ of $x$, i.e., $\Omega_x = \{y\in\LATT\SEP |x-y|=1\}$ and $\SIGMA_x$ is the set of all possible
configurations $\SIGMA_x = \SPINSP^{\Omega_x}$. Computationally the sample paths $\PROCMIC$
are constructed 
via KMC, that is  through the procedure described in \VIZ{totalrate} and \VIZ{skeleton} below.

The studied stochastic processes are set on a lattice (square, hexagonal, etc.) $\LATT$ with $N$ sites, they have a 
discrete, albeit high-dimensional, configuration space  $\SIGMA$ and necessarily have to be of jump type describing transitions between different 
configurations $\sigma \in \SIGMA$. 
Mathematically, a CTMC  is a stochastic process $\{\sigma_t\}$ defined completely in terms of  the
local  transition rates $c(\sigma, \sigma')$ which determine  the  updates (jumps)  from any current
state $\sigma_t=\sigma$ to a (random) new state $\sigma'$.
In the context of the spatially distributed applications in which we are interested  here, the  local transition rates will be denoted as
\begin{equation}\label{rates0}
c(\sigma, \sigma')=c(x,\omega;\sigma)\COMMA
\end{equation}
which correspond to an updating micro-mechanism 
from  a current configuration $\sigma_t=\sigma$ of the system to a new configuration $\CONFNEW$ by performing an update
in a neighborhood of  each site $x\in\LATT$. Here $\omega$ %
is an index  for all possible
configurations $\SIGMA_x$ that correspond to an update at a neighborhood $\Omega_x$ of the site $x$; we refer 
to the end of the section for specific examples.

The probability of a transition over an infinitesimal time interval $\DELTAT$ is
$$
\PROB{\sigma_{t+\DELTAT} = \CONFNEW\SEP \sigma_t = \sigma} = c(x,\omega;\sigma)\DELTAT +
\SMALLO(\DELTAT) \PERIOD
$$
Realizations of the process are constructed from the embedded discrete time  Markov chain $S_n =
\sigma_{t_n}$ (see \cite{KL}), corresponding to jump times $t_n$. 
The local transition rates \VIZ{rates0} define the total rate
\begin{equation}\label{totalrate}
  \lambda(\sigma)=\sum_{x \in \LATT}\sum_{\omega \in \SIGMA_x } c(x, \omega; \sigma)\COMMA
\end{equation}
which is the intensity of the exponential waiting time for a jump from the state $\sigma$. 
The transition probabilities for the embedded Markov chain $\{S_n\}_{n\geq 0}$ are 
\begin{equation}\label{skeleton}
   p(\sigma, \CONFNEW)=\frac{c(x, \omega; \sigma)}{\lambda(\sigma)}\PERIOD
\end{equation}
In other words once the exponential ``clock'' signals a jump, the system transitions from the state $\sigma$ 
to a new configuration $\CONFNEW$ with the probability $p(\sigma, \CONFNEW)$.
On the other hand, the evolution of the entire system at any time $t$ is described by the  transition probabilities
$P(\sigma, t ; \zeta):=\PROB{\sigma_{t} = \sigma\SEP \sigma_0 = \zeta}$
where $\zeta \in \SIGMA$ is an  initial configuration. The transition probabilities corresponding to the local rates 
\VIZ{rates0} satisfy the Forward Kolmogorov Equation ({Master Equation}), \cite{Liggett,Gardiner04},
\begin{equation}  \label{master}
  \partial_t P(\sigma, t ; \zeta):=\sum_{\sigma',
  \sigma'\ne \sigma} c(\sigma', \sigma)P(\sigma', t; \zeta)-\lambda(\sigma)P(\sigma, t ; \zeta)\COMMA
\end{equation}
where $P(\sigma, 0 ; \zeta)=\delta (\sigma-\zeta)$ and $\delta (\sigma-\zeta)=1$ if $\sigma=\zeta$
and zero otherwise.
In \cite{AKPTX}  we  developed a general mathematical framework for {\em parallelizable  approximations} of the KMC algorithm.
Rather than focusing on exactly constructing stochastic trajectories in \VIZ{totalrate} and \VIZ{skeleton}, 
we proposed to approximate the evolution of {\em observables} $f=f(\sigma) \in \CBS$, i.e., of bounded continous functions on
the configuration space $\SIGMA$.
The space of bounded continuous functions, $\CBS$, is regarded
as a Banach space with the norm
$$
  \NORMB{f} = \sup_{\sigma\in\SIGMA} |f(\sigma)|\PERIOD
$$
Here we consider observables/functions $f(\sigma)$ depending on large  number of variables $\sigma(x)$, $x\in\LATT$, 
such as 
coverage, surface roughness, correlations, etc., see for instance  the examples in Section~\ref{macroscopic}.
Alternatively, we may consider observables depending on infinitely many variables $\sigma(x)$, $x\in\Lambda=\Z^d$, 
to stress the necessity of working with the infinite volume limit.

Typically in KMC we need to compute expected values of such observables, that is quantities such as
\begin{equation}\label{propagator1}
   u(\zeta, t):=\EXPECT^\zeta[f(\sigma_t)]=\sum_\sigma f(\sigma)P(\sigma, t; \zeta)\COMMA
\end{equation}
conditioned on the initial data $\sigma_0 = \zeta$.
By a straightforward calculation using \VIZ{master} we obtain that  the observable \VIZ{propagator1}
satisfies the initial value problem
\begin{equation}\label{ODE}
    \partial_t u(\zeta, t)=\LOPER u(\zeta, t)\, , \quad\quad u(\zeta, 0)=f(\zeta)\COMMA
\end{equation}
where the operator $\LOPER:\CB(\SIGMA) \to \CB(\SIGMA)$ is known as the {\em generator} of the continous time Markov chain, \cite{Liggett},
and in the case of \VIZ{rates0} it is
\begin{equation}\label{generator}
    \LOPER f(\sigma) = \sum_{\sigma'}c(\sigma, \sigma')[f(\sigma')-f(\sigma)]=\sum_{x\in\LATT}\sum_{\omega\in\SIGMA_x} c(x,\omega;\sigma) [f(\CONFNEW) - f(\sigma)]\PERIOD
\end{equation}
We then write \VIZ{propagator1}, as the the action of the Markov semi-group  $\EXP{t\LOPER}$
associated with the generator $\LOPER$ and the process $\PROC{\sigma_t}$, \cite{Liggett},
on the observable $f$
\begin{equation}\label{semigroup}
    u(\zeta, t)=\EXPECT^\zeta[f(\sigma_t)]=\EXP{t\LOPER}f(\zeta)\COMMA
\end{equation}
where $\EXPECT^{\zeta}$ denotes the expected value with respect to the law %
of the process $\{\sigma_t\}_{t\geq 0}$ conditioned
on the initial configuration $\zeta$.

We define a difference operator $\DX f$ as an analogue of a derivative.  
Higher-order derivative analogues are defined in Section~\ref{macroscopic} when needed in the error analysis.
We define  a corresponding function space,
which is necessary in order to set up the semigroup $\SEMIGRP=\EXP{t\LOPER}$ when we consider the infinite lattice 
$\Lambda=\Z^d$ or to obtain estimates which are independent of the system size $N$ when considering the lattice 
$\LATT$ in Section~\ref{macroscopic}.
\begin{definition}\label{definition:derivative}
 Let $f\in\CBS$ then for any $x\in\LATT$ we define
 $$
    \DXOMEGA f(\sigma) = f(\CONFNEW) - f(\sigma)\PERIOD
 $$
We define the norm 
$$ \NORMA{1}{f} \equiv \sum_{x,\omega} \NORM{\DXOMEGA f} $$
and the space of functions on $\SIGMA=\SPINSP^{\LATT}$
$$
  \CSS{1} = \{ f\in\CB(\SIGMA)\SEP \NORMA{1}{f} \leq C_f \;\;\mathrm{where}\;\; C_f 
               \;\;\mathrm{is\; independent \; of}\;\; N \ \}\PERIOD
$$
Similarly we define the space of functions on $\SIGMA=\SPINSP^{\Lambda}$ associated with the infinite lattice $\Lambda= \Z^d$
$$
  \CSS{1} = \{ f\in\CB(\SIGMA)\SEP \NORMA{1}{f} < \infty \}\PERIOD
$$
\end{definition}
Because of the estimates in Section~\ref{macroscopic}, see \VIZ{phi_norm} and \VIZ{theta_norm} in
Theorem~\ref{bound:L2_on_u}, we will employ spaces with higher  discrete derivatives that will be defined
in Section~\ref{macroscopic}.
On the infinite lattice $\Lambda$  macroscopic observables are all $f \in \CSS{1}$. In  the case of
$\LATT$, macroscopic observables are 
all $f=f(\sigma)$ such that $\NORMA{1}{f}$ is independent of the system size $N$; such typical
examples are discussed in Section~\ref{macroscopic}.

Typically, the evolution of the particle system on the infinite lattice $\Lambda=\Z^d$ 
is  well-defined, as demonstrated in the
next propositions.
\begin{proposition} 
For any $f \in \CSS{1}$ we have that the series 
$$
  \LOPER f(\sigma) = \sum_{x\in\Lambda}\sum_{\omega\in\SIGMA_x} \RATEC [f(\CONFNEWX) - f(\sigma)]\COMMA
$$
converges uniformly and defines a function in $\CB(\SIGMA)$, provided $ \sup_{\omegax, \sigma}  c(x, \omega; \sigma)<\infty$. 
Furthermore,
$$
\NORMB{\LOPER f} \leq \sup_{\omegax, \sigma}  c( x, \omega; \sigma)
\NORMA{1}{f}\PERIOD
$$
\end{proposition}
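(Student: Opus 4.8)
The plan is to establish absolute and uniform convergence of the series by the Weierstrass $M$-test, with the summable majorant supplied precisely by the norm $\NORMA{1}{f}$, and then to identify the limit as a bounded continuous function; the norm estimate will drop out of the same bounds.

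First I would bound a single term of the series pointwise. For fixed $\sigma\in\SIGMA$, $x\in\Lambda$ and $\omega\in\SIGMA_x$, by Definition~\ref{definition:derivative},
$$
  \big|\RATEC\,[f(\CONFNEWX) - f(\sigma)]\big| \;\leq\; \Big(\sup_{\omegax,\sigma} c(x,\omega;\sigma)\Big)\,|\DXOMEGA f(\sigma)| \;\leq\; \Big(\sup_{\omegax,\sigma} c(x,\omega;\sigma)\Big)\,\NORM{\DXOMEGA f}\COMMA
$$
so the constants $M_{x,\omega} := \big(\sup_{\omegax,\sigma}c(x,\omega;\sigma)\big)\,\NORM{\DXOMEGA f}$ majorize the terms uniformly in $\sigma$. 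Since $f\in\CSS{1}$ we have $\sum_{x\in\Lambda}\sum_{\omega\in\SIGMA_x} M_{x,\omega} = \big(\sup_{\omegax,\sigma}c(x,\omega;\sigma)\big)\,\NORMA{1}{f} < \infty$, so by the Weierstrass $M$-test the series defining $\LOPER f(\sigma)$ converges absolutely, and uniformly in $\sigma$, on all of $\SIGMA$; in particular its value does not depend on the order of summation.

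Next I would check continuity of the limit. Each map $\sigma\mapsto\CONFNEWX$ alters only finitely many coordinates of $\sigma$, hence is continuous on $\SIGMA$, so $\sigma\mapsto f(\CONFNEWX)-f(\sigma)$ is continuous; combined with the continuity of $\sigma\mapsto\RATEC$, which is part of the standing assumptions on the rates in Section~2, every finite partial sum $\sum_{(x,\omega)\in F}\RATEC[f(\CONFNEWX)-f(\sigma)]$ lies in $\CB(\SIGMA)$. A uniform limit of bounded continuous functions is bounded continuous, so $\LOPER f\in\CB(\SIGMA)$. Finally, taking the supremum over $\sigma$ and applying the triangle inequality together with the termwise bound above,
$$
  \NORMB{\LOPER f} \;\leq\; \sum_{x\in\Lambda}\sum_{\omega\in\SIGMA_x}\NORM{c(x,\omega;\cdot)\,\DXOMEGA f} \;\leq\; \Big(\sup_{\omegax,\sigma} c(x,\omega;\sigma)\Big)\sum_{x\in\Lambda}\sum_{\omega\in\SIGMA_x}\NORM{\DXOMEGA f} \;=\; \Big(\sup_{\omegax,\sigma} c(x,\omega;\sigma)\Big)\NORMA{1}{f}\PERIOD
$$

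There is no genuine obstacle here; the argument is essentially the $M$-test plus the elementary fact that a uniform limit of $\CB$ functions is $\CB$. The only points deserving a word of care are that one must invoke continuity of the rates $\RATEC$ in $\sigma$ in order to place the finite partial sums in $\CB(\SIGMA)$, and that the finite-range (localized update) structure of the mechanism is what makes $\sigma\mapsto\CONFNEWX$ continuous on the product space $\SIGMA=\SPINSP^{\Lambda}$ — both are built into the modeling assumptions of Section~2, so they are available without further ado.
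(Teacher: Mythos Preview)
Your argument is correct and is exactly the elaboration the paper intends: the paper's own proof is the single sentence ``Follows directly from \VIZ{generator} and the definition of $\CSS{1}$,'' and your Weierstrass $M$-test argument with majorant $M_{x,\omega}=(\sup c)\NORM{\DXOMEGA f}$ is precisely how one unpacks that sentence. The additional care you take with continuity of the partial sums is appropriate and consistent with the standing assumptions.
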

\begin{proof} Follows directly from \VIZ{generator} and the definition of $\CSS{1}$.
\end{proof}

\begin{proposition}\label{SemigroupDef}
Under the boundedness assumptions on the rates, the closure of the operator $\LOPER$ defines a Markov generator for a Markov
semigroup $\SEMIGRP\equiv \EXP{t\LOPER}$, such that for $f\in\CSS{1}$,  $\SEMIGRP f \in\CSS{1}$ and
$$
\NORMA{1}{\SEMIGRPt f} \leq \EXP{\Gamma  t} \NORMA{1}{f}\COMMA
$$
where $\Gamma$ is a constant depending on the rates $c(x, \omega; \sigma)$.
\end{proposition}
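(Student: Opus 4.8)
The plan is to split the statement into its two parts: (i) that the closure $\overline{\LOPER}$ generates a Markov semigroup, which I would take from the standard theory of interacting particle systems, and (ii) the invariance of $\CSS{1}$ together with the exponential bound, which is the part that needs a short self-contained argument. For (i): the operator $\LOPER$ of \VIZ{generator} is defined on the dense subspace $\CSS{1}$, annihilates constants, and satisfies the positive maximum principle (if $f$ attains its maximum at $\sigma$, then every summand $c(x,\omega;\sigma)[f(\sigma^{x,\omega})-f(\sigma)]\le 0$, so $\LOPER f(\sigma)\le 0$); hence it is a Markov pregenerator. Under the boundedness of the rates together with the locality assumed throughout (each $\Omega_x$ finite with uniformly bounded cardinality, and each rate $c(y,\omega';\cdot)$ depending on only finitely many coordinates) one verifies the summability condition of Liggett's construction theorem, so that $\overline{\LOPER}$ generates a strongly continuous Markov (Feller) semigroup $\SEMIGRP=\EXP{t\overline{\LOPER}}$ on $\CB(\SIGMA)$ with $\NORMB{\SEMIGRP(t)\,g}\le\NORMB{g}$; I would invoke \cite{Liggett} (Theorem~I.3.9 and the attendant Hille--Yosida argument) rather than reproduce this.

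For (ii) I would first prove the bound on a finite lattice $\LATT$, where $\LOPER$ is a bounded operator and $\SEMIGRP(t)=\EXP{t\LOPER}$ is a genuine matrix exponential, taking care that the resulting constant $\Gamma$ does not depend on $N$, and then pass to the limit. Fix $f$ and set $u(t)=\SEMIGRP(t)f$, so that $\partial_t u=\LOPER u$. Applying the difference operator $\DXOMEGA$ and commuting it past $\partial_t$ gives $\partial_t\DXOMEGA u=\LOPER\,\DXOMEGA u+R_{x,\omega}$, where $R_{x,\omega}:=\DXOMEGA\LOPER u-\LOPER\DXOMEGA u$ is the commutator. By Duhamel's formula, $\DXOMEGA u(t)=\SEMIGRP(t)\DXOMEGA f+\int_0^t\SEMIGRP(t-s)R_{x,\omega}(s)\,ds$, and using $\NORMB{\SEMIGRP(t-s)\,g}\le\NORMB{g}$ we obtain $\NORMB{\DXOMEGA u(t)}\le\NORMB{\DXOMEGA f}+\int_0^t\NORMB{R_{x,\omega}(s)}\,ds$; summing over all $x\in\LATT$ and $\omega\in\SIGMA_x$ yields $\NORMA{1}{u(t)}\le\NORMA{1}{f}+\int_0^t\big(\sum_{x,\omega}\NORMB{R_{x,\omega}(s)}\big)\,ds$. (Equivalently, one can differentiate $\NORMB{\DXOMEGA u(t)}$ at a maximizing configuration and use the positive maximum principle to discard the $\LOPER\DXOMEGA u$ term.)

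The core of the argument is then the estimate $\sum_{x,\omega}\NORMB{R_{x,\omega}(s)}\le\Gamma\,\NORMA{1}{u(s)}$. A short computation from \VIZ{generator} shows that the contribution of a pair $(y,\omega')$ to $R_{x,\omega}(\sigma)$ equals $[c(y,\omega';\sigma^{x,\omega})-c(y,\omega';\sigma)]\,\DERIV_{y,\omega'}u(\sigma^{x,\omega})+c(y,\omega';\sigma)\big[u((\sigma^{x,\omega})^{y,\omega'})-u((\sigma^{y,\omega'})^{x,\omega})\big]$: the first bracket vanishes unless $\Omega_x$ meets the (finite) set of sites the rate $c(y,\omega';\cdot)$ depends on, and the second vanishes unless $\Omega_x\cap\Omega_y\ne\emptyset$, since two updates acting on disjoint coordinate blocks commute. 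By uniform locality, for each $(y,\omega')$ only a bounded number of pairs $(x,\omega)$ contribute, each contribution being at most $2\sup c$ times a block-update difference of $u$ supported near $\Omega_x\cup\Omega_y$; telescoping those differences through the $\DERIV_{z,\vartheta}u$ and reorganizing the double sum gives $\sum_{x,\omega}\NORMB{R_{x,\omega}(s)}\le\Gamma\,\NORMA{1}{u(s)}$ with $\Gamma$ depending only on $\sup c$, the interaction range, and $\sup_x|\SIGMA_x|$. Gronwall's inequality then gives $\NORMA{1}{u(t)}\le\EXP{\Gamma t}\NORMA{1}{f}$ on $\LATT$, uniformly in $N$; since $\SEMIGRP(t)f$ is the pointwise limit of the finite-volume semigroups applied to $f$ and $\Gamma$ is $N$-independent, Fatou's lemma applied to the sum over $(x,\omega)$ yields $\NORMA{1}{\SEMIGRP(t)f}\le\EXP{\Gamma t}\NORMA{1}{f}<\infty$, hence $\SEMIGRP(t)f\in\CSS{1}$.

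I expect the commutator estimate to be the only real obstacle: one must see that both the perturbation of a transition rate at $y$ caused by an update at $x$ and the failure of two overlapping updates to commute are strictly finite-range effects, so that after summing over $(x,\omega)$ one recovers a fixed multiple of $\NORMA{1}{u}$ rather than a quantity that grows with the system size $N$. The remaining steps — the pregenerator/Hille--Yosida input, the Duhamel bound, Gronwall, and the finite-volume limit — are routine.
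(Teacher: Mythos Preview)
Your proposal is correct. The paper's own proof of this proposition is simply a citation to \cite[Theorem~3.9, p.~27]{Liggett}, which contains both the existence of the Markov semigroup and the exponential propagation bound on $\NORMA{1}{\cdot}$; your argument spells out that proof in the present notation. In fact, the Duhamel-plus-commutator-plus-Gronwall estimate you give for $\NORMA{1}{u(t)}$ is exactly what the paper itself carries out later (see \VIZ{aux} and the function $\varphi(t)$ in Proposition~\ref{bound:L2_on_u}) when it needs the first-derivative bound as a building block for the higher-order estimates.
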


\begin{proof}
See \cite[Theorem 3.9, pp 27]{Liggett}.
\end{proof}

Clearly the same results hold for the finite lattice $\LATT$ and the corresponding high-dimensional configuration space $\SIGMA$, 
where all constants are independent of the size $N$.

\medskip

\noindent{\sc Examples.}\\
\noindent{\it Adsorption/Desorption for single species particles.} In this case spins take values in
       $\sigma(x)\in\SPINSP=\{0,1\}$, $\Omega_x = \{x\}$, $\SIGMA_x = \{0,1\}$ and the update represents a spin flip
       at the site $x$, i.e., for $z\in\LATT$
       $$
	  \CONFNEW(z) \equiv \sigma^x(z) =  \begin{cases} 
	                                       \sigma(z) & \mbox{if $z\neq x$,} \\
                                               1 - \sigma(x) & \mbox{if $z = x$.}
	                                    \end{cases}
       $$
\smallskip

\noindent{\it Diffusion for single species particles.} The state space for spins is $\sigma(x)\in\SPINSP=\{0,1\}$, 
       $\Omega_x = \{y\in\LATT\SEP |x-y|=1\}$ includes all nearest neighbors of the site $x$ to which a particle
       can move. Thus the new configuration $\CONFNEW = \sigma^{(x,y)}$ is obtained by updating the
       configuration $\sigma_t=\sigma$ 
       from the set of possible local configuration changes $\{0,1\}^{\Omega_x}$ using the specific rule, also known
       as spin exchange, which involves changes at two sites $x$ and $y\in\Omega_x$
       $$
	  \CONFNEW(z) \equiv \sigma^{(x,y)}(z) =  \begin{cases} 
	                                              \sigma(z) & \mbox{if $z\neq x,y$,} \\
                                                      \sigma(x) & \mbox{if $z = y$,} \\
                                                      \sigma(y) & \mbox{if $z = x$.}
	                                         \end{cases}
       $$
       The transition rate is then written as $c(x,\omega;\sigma) = c(x,y;\sigma)$.
       The resulting process $\PROCMIC$ defines  dynamics with the total number of particles ($\sum_{x\in\LATT}\sigma(x)$)
       conserved, sometimes referred to as Kawasaki dynamics, \cite{ACDV}.
\smallskip

\noindent{\it Multicomponent reactions.} Reactions that involve $\NUMSP$ species of particles are easily described 
      by enlarging the spin space to $\SPINSP=\{0,1,\dots,\NUMSP\}$. If the reactions occur only at a single site
      $x$, the local configuration space $\SIGMA_x = \SPINSP$ and the update is indexed by $k\in\SPINSP$ with the
      rule
      $$
	  \CONFNEW(z) \equiv \sigma^{(x,k)}(z) =  \begin{cases} 
	                                              \sigma(z) & \mbox{if $z\neq x,y$,} \\
                                                      k  & \mbox{if $z = x$.}
	                                         \end{cases}
      $$
      The rates $c(x,\omega;\sigma) \equiv c(x,k;\sigma)$ define probability of a transition $\sigma(x)$ to species
      $k=1,\dots,\NUMSP$ or vacating a site, i.e., $k=0$, over $\DELTAT$.

      \medskip

\smallskip

\noindent{\it Reactions involving  particles with internal degrees of freedom.}
      Typically a reaction involves particles with internal degrees of freedom, and in this case  several neighboring lattice sites 
      may be updated at the same time, corresponding to the degrees of freedom of the  particles involved in the reaction. 
      For example, in a case such as  CO oxidation on a catalytic surface, \cite{evans09}, 
      when only particles at a nearest-neighbor distance can react we set $\sigma(x)\in\SPINSP=\{0,1,\dots,\NUMSP\}$,
      $\Omega_x = \{y\in\LATT\SEP |x-y|=1\}$ and the set of local updates $\SIGMA_x = \SPINSP^{\Omega_x}$. Such $\SIGMA_x$
      contains all possible reactions in a neighborhood of $x$. When reactions involve only pairs of species, the rates can be
      indexed by $k$, $l\in \SPINSP$, or equivalently $\SIGMA_x = \SPINSP\times \SPINSP$. 
      Then the reaction rate $c(x,\omega;\sigma) = c(x,y,k,l;\sigma)$ describes the  probability per unit
      time of  $\sigma(x)\to k$ at the site $x$ and $\sigma(y)\to l$ at $y$, i.e., the updating mechanism
      $$
	  \CONFNEW(z) \equiv \sigma^{(x,y,k,l)}(z) =  \begin{cases} 
	                                              \sigma(z) & \mbox{if $z\neq x,y$,} \\
                                                      k         & \mbox{if $z = x$,} \\
                                                      l         & \mbox{if $z = y$,}
	                                         \end{cases}
      $$
      where $|x-y|=1$.
\section{Towards parallel kinetic Monte Carlo algorithms}
In practice, the sample paths $\PROCMIC$
are constructed by the kinetic Monte Carlo algorithm, that is  by simulating the embedded Markov chain defined by 
\VIZ{totalrate} and \VIZ{skeleton}
and advancing the tine by random time-steps from the exponential distribution. 
Implementations are based on the efficient calculation of transition probabilities, e.g.,  \cite{BKL75} for 
Ising models, known as a BKL Algorithm,  and in \cite{Gillespie76} known as Stochastic Simulation Algorithm (SSA) for reaction systems.

It is evident from formulas \VIZ{totalrate} and \VIZ{skeleton}, that KMC algorithms are {\em inherently serial} as updates are done 
at one site $x \in \LATT$ at a time, while on the  other hand  \VIZ{totalrate}  depends on information from the entire spatial 
domain $\LATT$. For these reasons it appears that KMC  cannot be parallelized easily.
However, Lubachevsky, in \cite{Lubachevsky88}, proposed  an asynchronous  approach for parallel KMC simulation  in the context of Ising systems, 
in the sense that different processors simulate independently parts of the physical domain, while  inconsistencies 
at the boundaries are corrected with a series of suitable rollbacks. This method relies on the uniformization of \VIZ{totalrate}; 
thus the approach yields a  null-event algorithm, \cite{binder},  which includes 
rejected moves over the entire spatial sub-domain that corresponds to  each processor, see also \cite{Nicol}.
A modification in order to incorporate the BKL Algorithm  was proposed in \cite{Lubachevsky88}, which was implemented 
and tested in \cite{Korniss99}.  This is a still asynchronous algorithm, where  BKL-type  rejection-free simulations are carried out 
in the interior of each sub-domain (processor), while uniform rates are used at the boundaries, reducing rejections to just the boundary set. 
However, these asynchronous algorithms 
may still have a high number of rejections for boundary events and rollbacks, which considerably  
reduce the parallel efficiency, \cite{ShimAmar05}. 
Advancing processors in time in a synchronous manner over a fixed time-window can provide a way to mitigate the excessive number of 
boundary inconsistencies 
between processors and ensuing rejections and rollbacks in earlier methods. Such {\em synchronous} parallel KMC algorithms were proposed  
in \cite{Lubachevsky93}, \cite{ShimAmar05}, 
\cite{MerickFichthorn07}, \cite{ShimAmar09}. However, several costly {\em global} communications are required at each cycle 
between all processors whenever a boundary event occurs in any one of them, in order to avoid errors in processor communication, \cite{ShimAmar09}.
As we will discuss  in the sequel, many of these issues with parallel KMC can be addressed by abandoning the 
earlier perspective on creating a parallel KMC algorithm with exactly the same 
rates $c(x, \omega; \sigma)$ in  \VIZ{generator} as the serial algorithm.

Indeed, in  \cite{AKPTX},  we  adopted  the approach of creating a parallel KMC algorithm which {\em approximates}  the underlying  
continuous time Markov chain
of the serial algorithm instead of reproducing its master equation exactly. 
We proposed  a spatio-temporal decomposition for  the Markov operator underlying  the KMC  algorithm  into  a hierarchy of operators  
corresponding  to the processor architecture. Based on this operator 
decomposition we  can formulate {\it Fractional Step  KMC Approximation} schemes
by employing the Trotter product formula. In turn these approximating schemes  determine the {\em Communication Schedule} between processors through 
the sequential application of the operators in the decomposition, as well as  the time step employed in the particular fractional step scheme.
Earlier,  in \cite{ShimAmar05b} the authors also proposed an {\em approximate} algorithm,   
in order to create a parallelization scheme for KMC. It was  demonstrated in \cite{ShimAmar09, SPPARKS}, that %
boundary inconsistencies  are resolved in a straightforward fashion, while there is an absence of global communications. Finally, among the parallel algorithms  
tested in \cite{ShimAmar09}, the  one in 
 \cite{ShimAmar05b} had  
the highest parallel efficiency. In \cite{AKPTX}, we demonstrated that the approximate algorithm in
\cite{ShimAmar05b} is a special case of the {\it Fractional Step Approximation} 
schemes introduced
in \cite{AKPTX}. We also demonstrated,  using the Random Trotter Theorem,  \cite{Kurtz},  that the algorithm in  \cite{ShimAmar05b} 
is {\em numerically consistent} in the approximation limit, i.e.,  as the time step in the fractional step scheme converges to zero, 
it   converges to a Markov Chain  that has the same master equation and generator as the original serial KMC.
Finally, the open source SPPARKS parallel Kinetic Monte Carlo simulator, \cite{SPPARKS}, also relies on such Fractional Step approximations.

In this article, the convergence, reliability and efficiency of parallel algorithms, that fit the Fractional Step KMC approximation 
framework, are systematically explored  by rigorous  numerical analysis which relies on controlled-error approximations 
in transient regimes relevant to the  simulation of extended systems.

\subsection{Fractional time step kinetic Monte Carlo algorithms}\label{schemes}

In \cite{AKPTX} we proposed a class of parallel KMC algorithms that are based on operator splitting 
of the Markov generator $\LOPER$ which is based
on a geometric decomposition of the lattice $\LATT$. 
\begin{definition}\label{decomposition}
The lattice $\LATT$ is decomposed into non-overlapping coarse cells $\CUBE_m$, $m=1,\dots,M$ such that, $|C_m|=Q=q^d$, where $d$ is the dimension, 
\begin{equation}\label{decomp}
\LATT = \bigcup_{m=1}^M \CUBE_m\COMMA\;\;\;\; \CUBE_m \cap \CUBE_n = \emptyset\COMMA\; m\neq n\COMMA\; N=MQ \PERIOD
\end{equation}
The range of interactions is defined as $L = \max_{x\in\CUBE_m}\{\DIAM \Omega_x\}$. 
For a coarse cell $\CUBE_m$ the closure of this set is
$$
\CLSR\CUBE_m = \{ z\in\LATT\SEP |z - x|\leq L \COMMA x\in\CUBE_m\}\PERIOD
$$
The boundary of $\CUBE_m$ is then defined as $\BNDRY\CUBE_m = \CLSR\CUBE_m \;\setminus\;\CUBE_m$. 
\end{definition}

The closure $\CLSR\CUBE_m$ thus includes all sites of $\CUBE_m$ and  all ``boundary'' 
lattice sites $\BNDRY\CUBE_m$  which are connected with sites in $\CUBE_m$ through  particle interactions in the  updating mechanism, 
see Figure~\ref{fig:lattice_partition}. 
In many models the value of the interaction range $L$ is independent of $x$ due to the translational invariance of the model. 
This geometric partitioning induces a decomposition of \VIZ{generator}
\begin{equation}\label{gendecomp}
\LOPER f(\sigma) = \sum_{m=1}^M \LOPER_m f(\sigma)\,, \quad \quad\LOPER_m f(\sigma)=
                  \sum_{x\in\CUBE_m} \sum_{x,\omega\in\SIGMA_x} c(\omega;\sigma) [f(\CONFNEW) - f(\sigma)]\PERIOD          
\end{equation}
The generators $\LOPER_m$ 
define a new Markov process $\{\sigma^m_t\}_{t\geq 0}$ on the {\it entire} lattice $\LATT$.
\begin{remark}
{\rm
In many models  such as  in catalysis the interactions between particles are  short-range, \cite{Reuter1, evans09},
and therefore the transition rates $c(x,\omega;\sigma)$ depend on the configuration $\sigma$
only through
$\sigma(x)$ and $\sigma(y)$ with $|x-y| \le L$, where $L$ is small (typically one). Similarly the new configuration $\CONFNEW$ 
involves changes only at the sites in this neighborhood. 
Thus the generator $\LOPER_m$ updates the lattice sites at most in the set 
$\CLSR \CUBE_m = \{ z\SEP |x-z|\leq L\COMMA x\in\CUBE_m\}$. 
Consequently the processes $\PROC{\sigma^m_t}$ and $\PROC{\sigma^{m'}_t}$
corresponding to $\LOPER_m$ and $\LOPER_{m'}$ are {\em independent provided} $\CLSR\CUBE_m \cap \CLSR\CUBE_{m'} = \emptyset$.
}
\end{remark}
\begin{figure}[ht]
  \centerline{%
        \includegraphics[scale=0.5]{./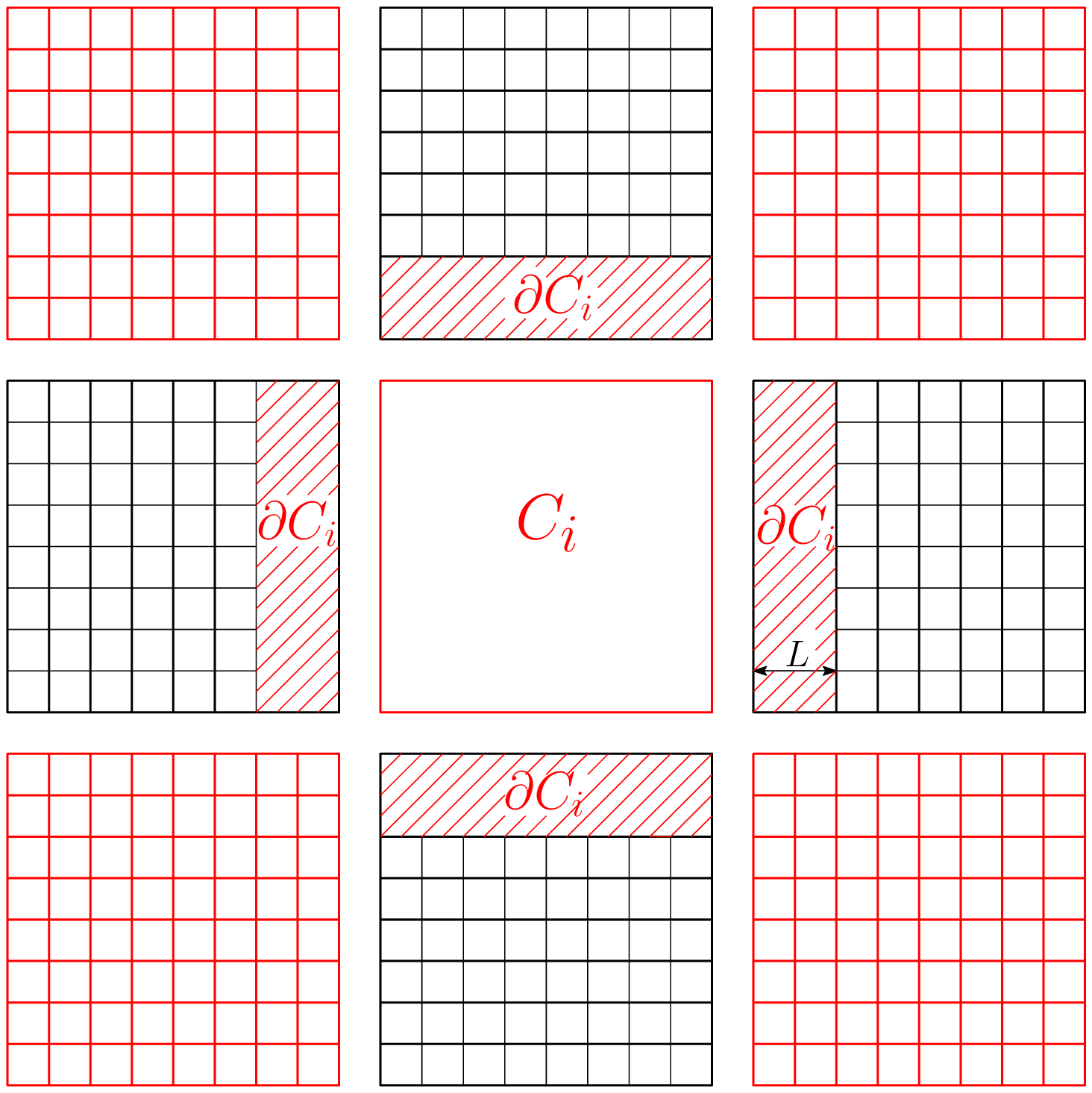}
  }%
  \caption{\label{fig:lattice_partition} Lattice partitioning in \VIZ{opdecomp}. Note that later we
           use the notation $C_i^\partial$ to denote the interior boundary of $C_i$, see
           Figure~\ref{fig:sublattice}}
\end{figure}
The operator decomposition   yields an algorithm suitable
for parallel implementation, in particular, in the case of short-range interactions when the communication
overhead can be handled efficiently: if the lattice $\LATT$ is partitioned into subsets $\CUBE_m$ such that
$\DIAM\CUBE_m > L$,  we can group the sets
$\{\CUBE_m\}_{m=1}^M$ so that there is no interaction between sites in  $\CUBE_m$ that
belong to the same group.
For the sake of simplicity we assume that the lattice is divided into two {\em sub-lattices}   described
by the index sets $\INDXE$ and $\INDXO$ (black/red in each block in Fig.~\ref{fig:lattice_partition}), 
which in turn {\em induce}  a corresponding splitting of the generator:
\begin{eqnarray}
  && \LATT = \LATTA \cup\LATTB:=\bigcup_{m\in\INDXE} \CUBE_m \cup \bigcup_{m\in\INDXO}
     \CUBE_m \;\;\; \textrm{and}\nonumber\\
  && \LOPER =\LOPA+\LOPB:= \sum_{m\in\INDXE} \LOPAm + \sum_{m\in\INDXO} \LOPBm
     \label{opdecomp}\PERIOD
\end{eqnarray}

The decomposition \VIZ{opdecomp}  has key consequences for simulating the process $\PROC{\sigma_t}$ in parallel, 
as well as formulating different  related algorithms.
The processes $\PROC{\sigma^m_t}$ corresponding to the  generators $\LOPAm$
are mutually independent  for different $m\in\INDXE$, and thus can be simulated in parallel.
Similarly  we can handle the processes belonging to the
group indexed by $\INDXO$. However, there is still {\em local communication/synchronization} between these two groups
as there is non-empty overlap between the groups due to interactions and updates in the sets
$\CLSR\CUBE_m\cap\CLSR\CUBE_{m'}$ when $m\in\INDXE$ and $m'\in\INDXO$ and the cells are within the interaction range $L$. 
Mathematically, we can describe all that through a  fractional step approximation of 
the Markov semigroup $\SEMIGRP\equiv\EXP{t\LOPER}$ of the process $\PROC{\sigma_t}$. The operator
splitting or equivalently the fractional step
approximation can be also viewed as an alternating dimension approximation since we solve the evolution of
$u(\sigma,t)$ given as solution of \VIZ{ODE} by alternating between evolution of $\sigma$'s in the dimensions corresponding 
to $\INDXE$ and $\INDXO$. 

Indeed, the key tool for our analysis are 
different   versions of the Trotter formula, \cite{Trotter, Kurtz},
\begin{equation}\label{trotter}
 \EXP{T\LOPER} = \lim_{n\to\infty}\left[ \EXP{\frac{T}{n}\LOPA}\EXP{\frac{T}{n}\LOPB}\right]^n
 \end{equation}
when applied to the operator $\LOPER = \LOPA + \LOPB$ in \VIZ{opdecomp}.
Thus to reach a time $T$ we define
a time step $\Delta t = \DT = \tfrac{T}{n}$ for a fixed value of $n$ and alternate the evolution by
$\LOPA$ and $\LOPB$,  giving rise to 
the {\em Lie } splitting approximation for $n\gg 1$:
\begin{equation}\label{lie}
   \EXP{T\LOPER} \approx  \SEMIGRPL:=\left[ \EXP{\Delta t\LOPA}\EXP{\Delta t\LOPB}\right]^n, \SPACE
   \mathrm{where} \SPACE \Delta t = \tfrac{T}{n} \PERIOD
\end{equation} 
To develop a   parallelizable  scheme we use  the fact that the action of the operator $\LOPA$ (and
similarly of $\LOPB$) can be distributed onto independent processing units, indexed by $m$ in \VIZ{opdecomp}, 
$$
     \EXP{\Delta t\LOPA}=\prod_{{m\in\INDXE}}\EXP{\Delta t\LOPAm}\COMMA 
    \quad\quad \EXP{\Delta t\LOPB}=\prod_{{m\in\INDXO}}\EXP{\Delta t\LOPBm}\PERIOD
$$
Analogously we have the {\em Strang} splitting scheme
\begin{equation}\label{strang}
  \EXP{T\LOPER} \approx \SEMIGRPS:=\left[\EXP{\frac{\Delta t}{2}\LOPA}\EXP{{\Delta t}\LOPB}\EXP{\frac{\Delta t}{2}\LOPA}\right]^n\COMMA
   \SPACE \mathrm{where} \SPACE \Delta t = \tfrac{T}{n} \PERIOD
\end{equation}      
From now on, for the notational convenience, we shall also use $\DT$ to symbolize $\Delta t$.

While operator splitting has been exploited in many classical numerical methods, e.g.,\cite{Hairer}, in our context it offers a rigorous 
framework for extending simple (deterministic) alternating strategies associated with, for example, traditional {\it Lie or Strang} splittings 
to more elaborate and randomized {\it Processor Communication Schedules}, we refer to Section~\ref{PCS} for a complete discussion.

We characterize the FS-KMC (Fractional Step KMC) algorithm \VIZ{lie} as {\em partially asynchronous}
since there is no processor communication during 
the period $\DT$. Furthermore, at every $\DT$
we have only local synchronization between processors, i.e., between the sets
$\CLSR\CUBE_m\cap\CLSR\CUBE_{m'}$ when $m\in\INDXE$ and $m'\in\INDXO$.  Hence, 
the bigger the allowable $\DT$ in \VIZ{lie} or in \VIZ{strang} the less processor communication we have, in which case 
the error in the approximation \VIZ{lie} or \VIZ{strang} worsens.
This {\em balance between accuracy  and processor communication} in algorithms is one of the themes of this article.

\section{Local and global error analysis}
The FS-KMC algorithm approximates the evolution of observables $u(\sigma,t)$ given by the original semigroup $\SEMIGRP$. 
We present an error analysis which focuses on  classes of observables such as \VIZ{propagator1} instead on estimating an approximation
of the  probability distribution of the process solving \VIZ{master}. This perspective is also relevant to  practical simulations, 
where the estimated quantity is linked to specific observables, and is 
simulated  by the FS-KMC algorithm.

To understand the error of this approximation we first analyze
the error for the two cases of deterministic PCS: the Lie splitting defines a new semigroup \VIZ{lie} that we denote 
$\SEMIGRPL$ and similarly $\SEMIGRPS$ denotes
the semigroup \VIZ{strang} obtained by the Strang splitting. The local error analysis can be treated in a similar way 
as it is done for the  finite
dimensional case when working on the lattice $\LATT$ by using the property proved in \cite{jahnke}. The estimates for local and 
global error follow 
standard steps and are presented next for completeness.  However, for {\em macroscopic observables} that typically arise 
in the simulation of extended KMC systems,
we prove estimates which are system-size independent in Section~\ref{macroscopic}. Finally, in
Section~\ref{infinite_volume}  we present, as a complementing  
theoretical perspective, the same estimates on the infinite lattice
$\Lambda=\Z^d$, in which case the involved generators are necessarily unbounded.

\begin{lemma}\label{RemainderBound}
Let $\LOPER$ be the generator of a strongly continuous contraction semigroup
$\{e^{t\LOPER}\}_{t\geq 0}$ on the Banach space $\CB$. Then the operators 
\begin{equation}
 \DOPER_m(t\LOPER) = \EXP{t\LOPER} - \sum_{k=0}^{m-1}\frac{t^k}{k!}\LOPER^k\COMMA 
     \;\;\;\;\; m\in\N^+
\end{equation}
satisfy the bound
\begin{equation}
 \NORM{\DOPER_m(t\LOPER)v} \leq \frac{t^k}{k!}\NORM{\LOPER^k v}, \SPACE \forall v \in\CB
\end{equation}
\end{lemma}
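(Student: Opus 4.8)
The natural route is to establish the exact integral (Taylor) remainder formula for the semigroup and then invoke the contraction property; note that the right-hand side of the asserted bound should read $\tfrac{t^m}{m!}\NORM{\LOPER^m v}$ (the exponent $k$ is a typo for $m$), and the estimate is meaningful for $v$ in the domain of $\LOPER^m$, which on the finite lattice $\LATT$, where $\LOPER$ is bounded, is all of $\CB$. Concretely, I would first prove by induction on $m\in\N^+$ that, for every $v$ in the domain of $\LOPER^m$,
\begin{equation}\label{eq:taylor-remainder}
  \EXP{t\LOPER}v - \sum_{k=0}^{m-1}\frac{t^k}{k!}\LOPER^k v = \frac{1}{(m-1)!}\int_0^t (t-s)^{m-1}\,\EXP{s\LOPER}\LOPER^m v\,ds\COMMA
\end{equation}
that is, $\DOPER_m(t\LOPER)v$ equals the integral on the right.

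For the base case $m=1$ this is the fundamental theorem of calculus for strongly continuous semigroups: for $v$ in the domain of $\LOPER$ the map $s\mapsto \EXP{s\LOPER}v$ is continuously differentiable with derivative $\EXP{s\LOPER}\LOPER v$, and integrating from $0$ to $t$ gives $\EXP{t\LOPER}v - v = \int_0^t \EXP{s\LOPER}\LOPER v\,ds$. For the inductive step, assuming \VIZ{eq:taylor-remainder} at level $m$, take $v$ in the domain of $\LOPER^{m+1}$, apply the $m=1$ identity to the vector $\LOPER^m v$ inside the integrand, $\EXP{s\LOPER}\LOPER^m v = \LOPER^m v + \int_0^s \EXP{r\LOPER}\LOPER^{m+1}v\,dr$, substitute, split the integral, and exchange the order of integration (Fubini, legitimate by strong continuity on the compact triangle $0\leq r\leq s\leq t$). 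The first piece contributes $\tfrac{t^m}{m!}\LOPER^m v$, completing the truncated Taylor sum up to order $m$, while the inner integral $\int_r^t (t-s)^{m-1}\,ds = \tfrac{(t-r)^m}{m}$ turns the remaining double integral into $\tfrac{1}{m!}\int_0^t (t-r)^m\,\EXP{r\LOPER}\LOPER^{m+1}v\,dr$, which is exactly the level-$(m+1)$ remainder. (Equivalently, a single integration by parts in $s$ performs the same bookkeeping.)

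Once \VIZ{eq:taylor-remainder} is in hand, the estimate is immediate: since $\{\EXP{t\LOPER}\}_{t\geq 0}$ is a contraction semigroup, $\NORM{\EXP{s\LOPER}\LOPER^m v}\leq \NORM{\LOPER^m v}$ for all $s\in[0,t]$, hence
\begin{equation*}
  \NORM{\DOPER_m(t\LOPER)v} \leq \frac{1}{(m-1)!}\int_0^t (t-s)^{m-1}\,ds\;\NORM{\LOPER^m v} = \frac{t^m}{m!}\,\NORM{\LOPER^m v}\PERIOD
\end{equation*}

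The only genuinely delicate point is the domain and regularity bookkeeping needed to make the calculus rigorous for a possibly unbounded generator: one must check that $v$ in the domain of $\LOPER^m$ makes each integrand $\EXP{s\LOPER}\LOPER^m v$ well-defined and strongly continuous, and that differentiation under the integral sign and Fubini apply. These are standard facts of $C_0$-semigroup theory (Hille--Yosida; see, e.g., \cite{Liggett}). In the setting actually used for the local error analysis on the finite lattice $\LATT$, $\LOPER$ is bounded, $\EXP{t\LOPER}=\sum_{j\geq 0}\tfrac{t^j}{j!}\LOPER^j$ converges in operator norm, the domain of $\LOPER^m$ is all of $\CB$, and \VIZ{eq:taylor-remainder} reduces to rearranging an absolutely convergent series, so there the argument is entirely elementary.
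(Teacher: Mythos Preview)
Your argument is correct: the integral Taylor remainder formula \VIZ{eq:taylor-remainder} established by induction, followed by the trivial estimate using $\NORM{\EXP{s\LOPER}}\leq 1$, is the standard proof and yields exactly $\tfrac{t^m}{m!}\NORM{\LOPER^m v}$ (your observation about the $k\mapsto m$ typo is right). The paper itself does not give a proof at all but simply refers to Jahnke \cite{jahnke}; your write-up is precisely the argument that reference contains, so there is nothing to contrast.
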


\begin{proof}
 {\rm
     see Jahnke, \cite{jahnke}.
 }
\end{proof}
\begin{lemma}[Local Error]\label{lem:local_error}
Let $\SEMIGRPLT$ and $\SEMIGRPST$ be the schemes \VIZ{lie} and \VIZ{strang}
associated with the Lie and Strang splittings respectively, and let $u(\DT) = \SEMIGRP(\DT) f$ be the solution
of \VIZ{ODE}. Then the local error for the Lie splitting is
\begin{equation}
  \NORM{  \SEMIGRPL(\DT)f - u(\DT)} \leq c_1\NORM{[\LOPA,\LOPB ]f}\DT^2 + c_2 \sum_{|m|=3} \NORM{\LOPA^{m_1}\LOPB^{m_2}f} \DT^3\COMMA
\end{equation}
and for the Strang splitting scheme
\begin{equation}
\begin{split}
   \NORM{ \SEMIGRPS(\DT)f - u(\DT) }  & \leq c_3\NORM{[\LOPA,[\LOPA,\LOPB]]f - 2[\LOPB,[\LOPB,\LOPA]]f}\DT^3 \\
                               & \quad + c_4 \sum_{|m|=4}    \NORM{\LOPA^{m_1}\LOPB^{m_2}\LOPA^{m_3}f}\DT^4
\end{split}
\end{equation}
where $[\LOPA,\LOPB]=\LOPA\LOPB - \LOPB\LOPA$ denotes the commutator of $\LOPA$ and $\LOPB$ and $c_i,\;i=1,...,4$ are
positive constants with $c_i < 1$.
\end{lemma}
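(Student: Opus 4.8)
\emph{Proof proposal.} The plan is the standard Taylor-expansion argument in the spirit of Jahnke--Lubich \cite{jahnke}: expand both the exact propagator $\EXP{\DT\LOPER}f$ and the split propagators about $\DT=0$, use Lemma~\ref{RemainderBound} (the remainder operators $\DOPER_m$) to control the tails, and then cancel the matching low-order terms so that only the advertised commutator and a bounded higher-order remainder survive. One works on the finite lattice $\LATT$, where $\LOPER$, $\LOPA$, $\LOPB$ are bounded so every operator product below makes sense; the analogue on $\Lambda=\Z^d$, where these generators are unbounded and $f$ must lie in suitable domains, is the content of Section~\ref{infinite_volume}.

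\textbf{Lie splitting.} First I would expand the exact solution via Lemma~\ref{RemainderBound} applied to $\LOPER=\LOPA+\LOPB$ with $m=3$,
\begin{equation*}
 u(\DT)=\EXP{\DT\LOPER}f = f + \DT\LOPER f + \tfrac{\DT^2}{2}\LOPER^2 f + \DOPER_3(\DT\LOPER)f,\qquad \NORM{\DOPER_3(\DT\LOPER)f}\leq \tfrac{\DT^3}{6}\NORM{\LOPER^3 f},
\end{equation*}
and independently write each factor of $\SEMIGRPL(\DT)=\EXP{\DT\LOPA}\EXP{\DT\LOPB}$ as its second-order Taylor polynomial plus a $\DOPER_3$ remainder, multiply the two expansions out, and sort every term by its power of $\DT$. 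The one point needing care is that a mixed term ``Taylor polynomial of one factor times remainder of the other'' is still $O(\DT^3)$ with a bound of the stated type, because the polynomial part is a bounded polynomial in $\LOPA$ or $\LOPB$ and $\{\EXP{t\LOPA}\}$, $\{\EXP{t\LOPB}\}$ are contractions. The $\DT^0$ and $\DT^1$ coefficients of $\SEMIGRPL(\DT)f$ match those of $u(\DT)$ (consistency), while the $\DT^2$ coefficients differ by
\begin{equation*}
 \Bigl(\tfrac12\LOPA^2+\LOPA\LOPB+\tfrac12\LOPB^2\Bigr)-\tfrac12\bigl(\LOPA+\LOPB\bigr)^2 = \tfrac12\bigl(\LOPA\LOPB-\LOPB\LOPA\bigr)=\tfrac12[\LOPA,\LOPB],
\end{equation*}
giving the $c_1\NORM{[\LOPA,\LOPB]f}\DT^2$ term with $c_1=\tfrac12$. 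Every remaining contribution is $O(\DT^3)$ and, after expanding $\LOPER^3=(\LOPA+\LOPB)^3$ and the leftover products and applying the triangle inequality, is dominated by a fixed finite sum of sup-norms of third-order operator products in $\LOPA,\LOPB$ evaluated at $f$ (abbreviated by $\sum_{|m|=3}\NORM{\LOPA^{m_1}\LOPB^{m_2}f}$), the combinatorial factors and the $1/3!$ of Lemma~\ref{RemainderBound} being absorbed into $c_2<1$.

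\textbf{Strang splitting.} The same scheme applies with one more order of expansion: expand the three factors $\EXP{\frac{\DT}{2}\LOPA}$, $\EXP{\DT\LOPB}$, $\EXP{\frac{\DT}{2}\LOPA}$ to third order (remainder $\DOPER_4$) and $u(\DT)$ via Lemma~\ref{RemainderBound} with $m=4$. Since the scheme is palindromic, the $\DT^2$ coefficients cancel automatically against those of $u(\DT)$, so the leading discrepancy sits at order $\DT^3$; collecting that coefficient, comparing with $\tfrac16\LOPER^3$, and regrouping the iterated products via the Jacobi identity yields $c_3\bigl([\LOPA,[\LOPA,\LOPB]]-2[\LOPB,[\LOPB,\LOPA]]\bigr)f$ with $c_3=\tfrac1{24}$, while the remaining $\DT^4$ terms are bounded, exactly as before, by $\sum_{|m|=4}\NORM{\LOPA^{m_1}\LOPB^{m_2}\LOPA^{m_3}f}$ with $c_4<1$.

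\textbf{Main obstacle.} The genuine work is purely combinatorial: writing each exponential factor as (finite Taylor polynomial) $+$ ($\DOPER_m$ remainder), expanding the product of two (Lie) or three (Strang) such factors, and assigning each of the many resulting terms to its $\DT$-order, while checking in each case that ``polynomial $\times$ remainder'' terms are controlled through the contraction property of $\EXP{t\LOPA}$, $\EXP{t\LOPB}$. There is no analytic difficulty beyond Lemma~\ref{RemainderBound}, and the explicit constants $c_i<1$ emerge from the factorials in that lemma together with the binomial coefficients produced by the expansions.
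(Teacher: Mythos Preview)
Your proposal is correct and follows essentially the same route as the paper: both expand each exponential factor as a Taylor polynomial plus a $\DOPER_m$ remainder controlled by Lemma~\ref{RemainderBound}, multiply the factors out, cancel the matching low-order terms against the expansion of $\EXP{\DT\LOPER}f$, and identify the commutator as the leading difference with the higher-order leftover bounded by sums of operator products. The paper organizes the bookkeeping via the auxiliary symbols $a_{k,N}(\DT)$, $b_{k,N}(\DT)$ in \VIZ{Def:a_k}, which is exactly the ``polynomial $\times$ remainder'' decomposition you describe in words; no additional idea is involved.
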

\begin{proof}
Using Lemma~\ref{RemainderBound} the proof follows the standard finite dimensional approach based on the expansion
of the operator exponential. We present the calculations here for the sake of completeness.
In order to simplify the notation, we introduce
\begin{align}\label{Def:a_k}
a_{k,N}(\DT) = \begin{cases}
           \frac{\DT^k}{k!}\LOPA^k & \mathrm{if}\;\;k<N\COMMA \\
		  \DOPER_k(\DT \LOPA) & \mathrm{if}\;\;k=N>0\COMMA \\
		      \EXP{\DT \LOPA}  & \mathrm{if}\;\;k=N=0\COMMA
          \end{cases}
\; \; \;
b_{k,N}(\DT) = \begin{cases}
           \frac{\DT^k}{k!}\LOPB^k & \mathrm{if}\;\;k<N\COMMA \\
		  \DOPER_k(\DT \LOPB) & \mathrm{if}\;\;k=N>0\COMMA \\
		   \EXP{\DT \LOPB}  & \mathrm{if}\;\;k=N=0\PERIOD
          \end{cases}
\end{align}
Now the semigroup for the Lie splitting, at $t=\DT$, can be written as
\begin{eqnarray*}
 \EXP{\DT\LOPA}e^{\DT\LOPB}f &=& \sum_{i+j\leq 3}a_{i,3-j}(\DT)b_{j,3}(\DT)f \\
                             &=& \Bigl( I + \DT(\LOPA + \LOPB) + \frac{\DT^2}{2}(\LOPA + \LOPB)^2 \Bigr)f\\
                             &+& \DT^2 [\LOPA,\LOPB]f + \sum_{i+j = 3}a_{i,3-j}(\DT)b_{j,3}(\DT)f\PERIOD
\end{eqnarray*}
Comparing with
\begin{equation*}
 \EXP{\DT(\LOPA+\LOPB)}f = \Bigl( I + \DT(\LOPA + \LOPB) + \frac{\DT^2}{2}(\LOPA +\LOPB)^2 \Bigr)f + \DOPER_3(\DT(\LOPA+\LOPB))f\COMMA
\end{equation*}
we get the estimate for the local error
\begin{align*}
 &\NORM{\EXP{\DT\LOPA}\EXP{\DT\LOPB}f - \EXP{\DT(\LOPA+\LOPB)}f}  \leq \DT^2\NORM{ [\LOPA,\LOPB]f} \\ 
     & \qquad + \NORM{\DOPER_3(\DT(\LOPA+\LOPB))f} +\NORM{\sum_{i+j = 3}a_{i,3-j}(\DT)b_{j,3}(\DT)f}\PERIOD
\end{align*}
The second term in the above inequality is bounded by Lemma~\ref{RemainderBound} and the third term is bounded by
\begin{equation*}
  \NORM{\sum_{i+j = 3}a_{i,3-j}(\DT)b_{j,3}(\DT)f} \leq 
                       c\DT^3 \Bigl( \NORM{\LOPA^3f} +\NORM{\LOPA^2\LOPB f} + \NORM{\LOPA\LOPB^2f} + \NORM{\LOPB^3f} \Bigr)\COMMA
\end{equation*}
which follows from the definitions of $a_k$ and $b_k$. The last step completes the proof for the local error in the Lie case.
For the Strang scheme the proof follows the same idea, we only have to take one more term in the expansion,
\begin{align*}
  \EXP{\frac{\DT}{2}\LOPA}e^{\DT\LOPB}  \EXP{\frac{\DT}{2}\LOPA} f 
                                   & = \sum_{i+j+k\leq 4} a_{k,4-i-j}(\frac{\DT}{2})  b_{j,4-i}(\DT) a_{i,4}(\frac{\DT}{2})  f \\
                                   &= \Bigl( I + \DT(\LOPA + \LOPB) + \frac{\DT^2}{2}(\LOPA + \LOPB)^2 + 
                                      \frac{\DT^3}{6}(\LOPA + \LOPB)^3 \Bigr)f \\
                                   & \qquad + \frac{\DT^3}{24}\Bigl( [\LOPA,[\LOPA,\LOPB]] -
                                       2[\LOPB,[\LOPB,\LOPA]] \Bigr)f \\
                                   & \qquad  + \sum_{i+j+k = 4}
                                       a_{k,4-i-j}(\frac{\DT}{2}) b_{j,4-i}(\DT) a_{i,4}(\frac{\DT}{2}) f\PERIOD
\end{align*}
Comparing with
\begin{eqnarray*}
   \EXP{\DT(\LOPA+\LOPB)}f &=& \Bigl( I + \DT(\LOPA + \LOPB) + \frac{\DT^2}{2}(\LOPA +\LOPB)^2 \\ 
                           &+& \frac{\DT^3}{6}(\LOPA + \LOPB)^3 \Bigr)f + \DOPER_4(\DT(\LOPA+\LOPB))f\COMMA
\end{eqnarray*}
the estimate for the local error follows
\begin{align*}
 &\NORM{\EXP{\frac{\DT}{2}\LOPA}e^{\DT\LOPB}  \EXP{\frac{\DT}{2}\LOPA} f -
       \EXP{\DT(\LOPA+\LOPB)}f}  \leq c\DT^3 \NORM{[\LOPA,[\LOPA,\LOPB]]f -2[\LOPB,[\LOPB,\LOPA]]f} \\
           & \qquad + \NORM{\sum_{i+j+k = 4} a_{k,4-i-j}(\frac{\DT}{2}) b_{j,4-i}(\DT) a_{i,4}(\frac{\DT}{2}) f}
             + \NORM{\DOPER_4(\DT(\LOPA+\LOPB))f}\PERIOD
\end{align*}
The second term is bounded by Lemma~\ref{RemainderBound} and the third term is
bounded by
\begin{equation*}
 \NORM{\sum_{i+j+k = 4}
   a_{k,4-i-j}(\frac{\DT}{2}) b_{j,4-i}(\DT) a_{i,4}(\frac{\DT}{2}) f} \leq c_4\DT^4 \sum_{|m|=4}
  \NORM{\LOPA^{m_1}\LOPB^{m_2}\LOPA^{m_3}f}\COMMA
\end{equation*}
which again follows from \VIZ{Def:a_k}.
\end{proof}

After establishing the local truncation error it is straightforward to obtain the global error 
estimate.
\begin{theorem}[Global error]\label{error:global}
Let $\SEMIGRPLT$ and $\SEMIGRPST$ be the 
 the schemes \VIZ{lie} and \VIZ{strang}
associated with the Lie and Strang splittings respectively and let $u(t_n) = \SEMIGRP(t_n) f$ be the exact solution
of \VIZ{ODE}. Then the global error at the time $T=t_n = n h$, %
for the Lie splitting is bounded by
\begin{equation}\label{global_lie}
 \NORM{\SEMIGRPL(t_n) u(0) - u(t_n)} \leq C_1 \max_{k=0,\dots,n} \NORM{[\LOPA,\LOPB]u(t_k)} \DT + \mathcal{R}_L(u) \DT^2\COMMA
\end{equation}
where the remainder is given by
\begin{equation}\label{lie_remainder}
    \mathcal{R}_L(u) \equiv \mathcal{R}_L(u;n,\DT) =  C_2 \max_{k=0,\dots,n}\sum_{|m|=3} \NORM{\LOPA^{m_1}\LOPB^{m_2}u(t_k)}\PERIOD
\end{equation}
and for the Strang scheme
\begin{align}\label{global_strang}
 \NORM{\SEMIGRPS(t_n)u(0) - u(t_n)} \leq & C_3 \max_{k=0,\dots,n}\NORM{ \Bigl( [\LOPA,[\LOPA,\LOPB]] - 2[\LOPB,[\LOPB,\LOPA]]\Bigr)u(t_k)} \DT^2\\
                                          + \mathcal{R}_S(u) \DT^3\COMMA \nonumber
\end{align}
where
\begin{equation}\label{strang_remainder}
      \mathcal{R}_S(u) = \mathcal{R}_S(u;n,\DT) = C_4
\max_{k=0,\dots,n}\sum_{|m|=4}\NORM{\LOPA^{m_1}\LOPB^{m_2}\LOPA^{m_3}u(t_k)}\COMMA
\end{equation}
and $C_1,C_2,C_3$ and $C_4$ are constants, depending only on $T$.
\end{theorem}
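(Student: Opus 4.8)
The plan is to derive the global error from the local error of Lemma~\ref{lem:local_error} by the standard Lady Windermere's fan argument, exploiting the stability (contraction) of the exact semigroup $\SEMIGRP$ and of the splitting operators. First I would write the global error as a telescoping sum over the $n$ time steps: with $\SEMIGRP_{L}(t_n) = (\SEMIGRP_{L}(\DT))^n$ and $u(t_n) = \SEMIGRP(\DT)^n f$, insert intermediate terms so that
\begin{equation*}
  \SEMIGRP_{L}(t_n) u(0) - u(t_n)
   = \sum_{k=0}^{n-1} \SEMIGRP_{L}(\DT)^{\,k}
        \Bigl( \SEMIGRP_{L}(\DT) - \SEMIGRP(\DT) \Bigr) \SEMIGRP(\DT)^{\,n-1-k} u(0) \PERIOD
\end{equation*}
Each summand is the propagated local error at step $n-1-k$: the trailing factor $\SEMIGRP(\DT)^{n-1-k}u(0) = u(t_{n-1-k})$ is an exact solution value, the middle factor is the one-step defect controlled by Lemma~\ref{lem:local_error}, and the leading factor $\SEMIGRP_{L}(\DT)^{k}$ is a product of semigroups that, by Proposition~\ref{SemigroupDef}, is a contraction (or at worst bounded by $\EXP{\Gamma T}$ in the relevant norm), so it does not amplify the error.

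Next I would apply the local error bound to the middle factor acting on $u(t_{n-1-k})$. For the Lie scheme this gives each summand a bound
\begin{equation*}
  c_1 \NORM{[\LOPA,\LOPB] u(t_{n-1-k})}\DT^2
   + c_2 \sum_{|m|=3}\NORM{\LOPA^{m_1}\LOPB^{m_2} u(t_{n-1-k})}\DT^3 \PERIOD
\end{equation*}
Summing over the $n$ indices and using $n\DT = T$ converts one power of $\DT$ into the factor $T$; replacing the individual terms $u(t_{n-1-k})$ by their maximum over $k=0,\dots,n$ produces exactly \VIZ{global_lie}--\VIZ{lie_remainder} with $C_1 = c_1 T$, $C_2 = c_2 T$ (absorbing the $\DT$ discrepancy and the constant $c_2<1$ into $C_2$). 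The Strang case is identical except that the local error is one order higher, so the telescoping sum yields $\DT^2$ and $\DT^3$ terms as in \VIZ{global_strang}--\VIZ{strang_remainder}, again with constants of the form $C_i = c_i T$.

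I would be careful about two points. First, the iterates $u(t_k)$ appearing in the local error estimate must themselves be controlled: one must know $\LOPA^{m_1}\LOPB^{m_2} u(t_k)$ is finite, which follows from the semigroup mapping property of Proposition~\ref{SemigroupDef} (and its iterations) applied to $f$ in a suitable higher-derivative space, so that $u(t_k) = \SEMIGRP(t_k)f$ lies in the domain of the relevant powers of the generators. Second, the stability factor in front must be bounded independently of $n$; on the finite lattice $\LATT$ with bounded rates both $\SEMIGRP$ and the $\EXP{\DT\LOPA}$, $\EXP{\DT\LOPB}$ are genuine contraction (Markov) semigroups in $\NORMB{\cdot}$, so $\NORMB{\SEMIGRP_{L}(\DT)^{k}}\le 1$ and no $\EXP{\Gamma T}$ loss occurs in the sup-norm estimate—this is what keeps $C_i$ dependent only on $T$. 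The main obstacle is not the telescoping itself (which is routine) but bookkeeping the multi-indices: one has to verify that the leftover terms $\sum_{i+j=3} a_{i,3-j}(\DT) b_{j,3}(\DT) f$ from Lemma~\ref{lem:local_error}, after propagation and summation, really collapse into the stated $\sum_{|m|=3}\NORM{\LOPA^{m_1}\LOPB^{m_2} u(t_k)}$ form with a single constant, and similarly the $|m|=4$ sum for Strang; this is where one must be slightly attentive that no term is double-counted and that all constants $c_i<1$ are absorbed cleanly into the $T$-dependent $C_i$.
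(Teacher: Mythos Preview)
Your proposal is correct and follows essentially the same route as the paper: the paper also writes the global error as the telescoping (Lady Windermere's fan) sum $e_n=\sum_{k=0}^{n-1}\SEMIGRPB^k(\DT)(\SEMIGRPB(\DT)-\SEMIGRP(\DT))\SEMIGRP^{n-k-1}(\DT)u(0)$, uses the contraction bound $\NORM{\SEMIGRPB^k}\le 1$, applies the local error Lemma~\ref{lem:local_error} to each summand with $\SEMIGRP^{n-k-1}(\DT)u(0)=u(t_{n-k-1})$, and then passes to the maximum over $k$ while absorbing the factor $n\DT=T$ into the constants. Your additional remarks about domain membership of $u(t_k)$ and the multi-index bookkeeping are cautious but not needed here, since those issues are already absorbed into Lemma~\ref{lem:local_error} and the later Section~\ref{macroscopic}.
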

\begin{proof}
It can be shown by induction that
$$
   e_n = \SEMIGRPB^n(\DT) u(0) - u(t_n) = \sum_{k=0}^{n-1} \SEMIGRPB^k(\DT) 
         \left({\SEMIGRPB(\DT) - \SEMIGRP(\DT)}\right) 
         \SEMIGRP^{(n-k-1)}(\DT) u(0)\PERIOD
$$
where $\SEMIGRPB$ denotes either $\SEMIGRPL$ or $\SEMIGRPS$.
By the assumptions, the operators $\LOPA$ and $\LOPB$ generate
strongly continuous contraction semigroups and thus $\NORM{\SEMIGRPB^k}\leq 1$, the global error is
bounded by
\begin{eqnarray*}
 \NORM{e_n} &\leq&   \sum_{k=0}^{n-1} \NORM{ \left({\SEMIGRPB(\DT) - \SEMIGRP(\DT)}\right) u(t_{n-k-1})} \\
            &\leq& n \max_{k=0,\ldots,n} \NORM{ \left({\SEMIGRPB(\DT) - \SEMIGRP(\DT)}\right)
                   u(t_{k})} \PERIOD
\end{eqnarray*}
Using Lemma~\ref{lem:local_error}, for $\SEMIGRPB=\SEMIGRPL$ and $\SEMIGRPB=\SEMIGRPS$, to estimate
the local error and the fact that $nh=T$ we obtain the estimates
\VIZ{global_lie} and \VIZ{global_strang} for the Lie and the Strang scheme respectively.

\end{proof}

\section{Estimates for macroscopic observables}\label{macroscopic}
In Theorem~\ref{error:global} we have shown that the proposed splitting schemes are convergent
as the time step $\DT$ tends to zero. 
The main idea of the proposed scheme is to control an error for observables, in other words we estimate
the weak error by analyzing solutions of \VIZ{ODE}.
If we restrict the initial data of the problem \VIZ{ODE} to a special class of functions, then
it is possible to show that the error terms are independent of the size of the lattice, $N$. It turns out that
this is a wide class of function containing some of the most common observables in KMC simulations,
such as mean coverage or spatial correlations, we refer to Section~\ref{section:examples_of_observables}.

In order to simplify the notation we suppress the dependence of the discrete derivative operator $\DERIV_{x,\omega}$ on
$\omega$ in Definition~\ref{definition:derivative}.

\begin{definition}\label{definition:high_derivatives}
For $\mathbf{x}=(x_1,\ldots,x_m)\in\LATT^m$ we introduce the notation
$$
\DERIV_{\mathbf{x}} f(\sigma)= \DERIV_{x_1}\ldots\DERIV_{x_m} f(\sigma)=\DERIV_{x_1\ldots
x_m}f(\sigma) \COMMA
$$
and we refer to it as the \textit{discrete derivative} of $f$ with respect to $\mathbf{x}$.
For example if $\mathbf{x}=(x,y)$ then
$$
   \DXY  f(\sigma) = \DX\DY f(\sigma) = f(\sigma^{xy}) - f(\sigma^x) -
                                        f(\sigma^y) + f(\sigma) \PERIOD
$$
\end{definition}

\begin{definition}
Let $\mathbf{x}=(x_1,\ldots,x_m)\in\LATT^m$ and $f\in\CBS$. Then we define the norm
$$
   \NORMA{m}{f} = \sum_{x_1\in\LATT}\ldots\sum_{x_m\in\LATT}
   \NORMA{\infty}{\DERIV_{\mathbf{x}}f} \COMMA
$$
and the function space
$$
   \CSS{m} = \{ f\in\CBS \SEP  \sum_{k=1}^m\NORMA{k}{f} \leq C_f \;\;\mathrm{where}\;\; C_f
             \;\;\mathrm{is\; independent \; of}\;\; N \} \COMMA \forall m \in\mathbb{N} \PERIOD
$$
\end{definition}
We refer to elements of $\CSS{m}$ as {\em macroscopic observables} and we will discuss examples
in Section \ref{section:examples_of_observables}. We now present the main theorem of this paper,  
showing that for such macroscopic observables, or equivalently  under  smoothness
conditions on the initial data, the global error estimates for the Lie and the Strang schemes are
independent of the dimension of the system. The proof of this theorem is contained in the next two
subsections.
\begin{theorem}\label{main}
 (a) Let $u(t)$ be the solution of \VIZ{ODE} with $u(0)= f \in \CSS{3}$.%
Then for the global error estimate of Theorem~\ref{error:global} on  the Lie scheme \VIZ{lie} we have
$$
  \NORM{\SEMIGRPL(t_n) u(0) - u(t_n)} \leq C_1 \max_{k=0,\dots,n} \NORM{[\LOPA,\LOPB]u(t_k)} \DT +
  \mathcal{R}_L(u) \DT^2\COMMA
$$
where
$$
  \NORM{[\LOPA,\LOPB]u(t_k)} < C %
$$
and 
$$ 
  \mathcal{R}_L(u) < \tilde C \COMMA
$$
where both constants $C$ and $\tilde C$ are independent of the system size $N$.
Moreover, if $u(0) \in \CSS{4}$ then for the global error of the Strang scheme
$$
  \NORM{ \Bigl( [\LOPA,[\LOPA,\LOPB]] - 2[\LOPB,[\LOPB,\LOPA]]\Bigr)u(t_k)} < C \COMMA
$$
and
$$ 
  \mathcal{R}_S(u) < \tilde C\COMMA 
$$ 
where the constants $C$ and $\tilde C$ are independent of the system size $N$.

\noindent
(b)  Many macroscopic observables $u(0)=f$ are not just in $\CSS{3}$ but also satisfy a local bound such as
\begin{equation}\label{localbound}
   \max_{z\in\LATT} \NORMINF{\DZ u(0,\cdotp)}+\max_{x, y\in\LATT} \NORM{\DXY  u(0,\cdot)} +\max_{x,
   y,z\in\LATT} \NORM{\DXYZ  u(0,\cdot)} \leq \frac{C}{N}\PERIOD
\end{equation}
Then the bounds for the commutators become
\begin{equation}\label{betterlie}
  \NORMINF{[\LOPA,\LOPB]u(t,\cdotp)} 
     \leq C\; \frac{L^{d+1}}{q} \COMMA
\end{equation}
and
\begin{equation}\label{betterstrang}
  \NORM{ \Bigl( [\LOPA,[\LOPA,\LOPB]] - 2[\LOPB,[\LOPB,\LOPA]]\Bigr)u(t_k)}     \leq C\; \frac{L^{2d+1}}{q} \COMMA
\end{equation}
where  $\tfrac{N}{M}=Q=q^d$ and the constant $C$ is independent of $N$. 
The parameters $L$, $M$, $N$, $q$ are defined in Definition~\ref{decomposition}, 
and $d$ is the dimension of the lattice $\LATT \subset \Lambda=\Z^d$.

\end{theorem}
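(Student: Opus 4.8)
The plan is to reduce the theorem, through Theorem~\ref{error:global}, to estimates on the four quantities that appear there: for the Lie scheme $\NORM{[\LOPA,\LOPB]u(t_k)}$ and $\mathcal{R}_L(u)$, and for the Strang scheme $\NORM{\Bigl([\LOPA,[\LOPA,\LOPB]]-2[\LOPB,[\LOPB,\LOPA]]\Bigr)u(t_k)}$ and $\mathcal{R}_S(u)$, with $t_k\le T$. Part (a) asks that these be bounded uniformly in $N$; part (b), under the extra hypothesis \VIZ{localbound}, asks for the explicit dependence on the interaction range $L$ and on $q$ (where $Q=q^d$). The whole argument rests on three structural facts about the decomposition \VIZ{opdecomp}: (i) composing $\LOPA$ and $\LOPB$ only inserts discrete derivatives, so operator words are controlled by the higher-derivative norms $\NORMA{\ell}{\cdot}$; (ii) in a commutator of $\LOPA$ and $\LOPB$ all contributions from well-separated sites cancel, so $[\LOPA,\LOPB]$ — and the iterated commutator — is supported near the interfaces between the cells of \VIZ{opdecomp}; and (iii) the semigroup $\SEMIGRP$ preserves the relevant function spaces, and (for part (b)) the per-site smallness of the discrete derivatives.

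For part (a) I would first prove a boundedness lemma: for $g\in\CSS{k}$, any word of length at most $k$ in $\LOPA,\LOPB$ applied to $g$ has norm $\le C\sum_{\ell=1}^{k}\NORMA{\ell}{g}$, with $C$ depending only on $\sup_{x,\omega,\sigma}c(x,\omega;\sigma)$, on uniformly bounded discrete-Lipschitz constants of the rates, and on the local combinatorics of the update sets $\SIGMA_x$, but not on $N$. This follows by induction using the discrete Leibniz rule $\DXOMEGA(c\,h)=(\DXOMEGA c)\,h(\CONFNEW)+c\,\DXOMEGA h$: each application of a generator produces at most one further discrete derivative of $g$, while a rate-difference factor $\DXOMEGA c$ is itself a discrete derivative of a rate and hence, by finite range, non-zero for only boundedly many neighbouring sites, so the iterated site sums collapse into the norms $\NORMA{\ell}{g}$. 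Taking words of length three (resp.\ four) bounds $\mathcal{R}_L(u)$ (resp.\ $\mathcal{R}_S(u)$) once $u(t_k)$ is known to lie in $\CSS{3}$ (resp.\ $\CSS{4}$). Next, a commutator-cancellation lemma: in $\LOPA\LOPB g-\LOPB\LOPA g$ every contribution from a pair $x\in\LATTA$, $y\in\LATTB$ with disjoint update neighbourhoods and mutually independent rates cancels — then $\DX$ and $\DY$ commute and the rate-difference factors vanish — so only pairs straddling a cell interface and within interaction range survive, each bounded by local data times a first- or second-order discrete derivative of $g$; hence $\NORM{[\LOPA,\LOPB]g}\le C(\NORMA{1}{g}+\NORMA{2}{g})$, and the same expansion gives $\NORM{\Bigl([\LOPA,[\LOPA,\LOPB]]-2[\LOPB,[\LOPB,\LOPA]]\Bigr)g}\le C\sum_{\ell=1}^{3}\NORMA{\ell}{g}$. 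Finally I would extend Proposition~\ref{SemigroupDef} to the norms $\NORMA{m}{\cdot}$ — the proof is that of \cite[Theorem~3.9]{Liggett}, the extra terms $[\DERIV_{\mathbf{x}},\LOPER]$ being of exactly the local type above — to get $\sum_{\ell\le m}\NORMA{\ell}{\SEMIGRPt f}\le\EXP{\Gamma_m t}\sum_{\ell\le m}\NORMA{\ell}{f}$. Combining the three facts with $f\in\CSS{3}$ (resp.\ $\CSS{4}$) and $t_k\le T$ proves part (a).

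For part (b) the extra ingredient is that \VIZ{localbound} makes the per-site discrete derivatives of orders $1,2,3$ equal to $O(1/N)$ at $t=0$. Writing $\rho_j(u):=\max_{x_1,\ldots,x_j}\NORM{\DERIV_{x_1\ldots x_j}u}$, I would first propagate this: the discrete derivatives solve $\partial_t\DERIV_{\mathbf{x}}u=\LOPER\DERIV_{\mathbf{x}}u+[\DERIV_{\mathbf{x}},\LOPER]u$, where the commutator source couples $\DERIV_{\mathbf{x}}u$ only to boundedly many discrete derivatives of order at most one higher, supported near $\mathrm{supp}\,\mathbf{x}$; using the contraction property $\NORM{\SEMIGRPt\,\cdot}\le\NORM{\cdot}$, Duhamel's formula and Gronwall's inequality on this hierarchy — which terminates for the macroscopic observables of Section~\ref{section:examples_of_observables}, whose discrete derivatives of all sufficiently high order vanish — one gets $\rho_j(u(t))\le C_j(T)/N$ for $j=1,2,3$, with $C_j(T)$ independent of $N$. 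Returning to the surviving sum for $[\LOPA,\LOPB]u(t)$ from the cancellation lemma, each term is now bounded by $C(\rho_2(u(t))+\rho_1(u(t)))\le C/N$, while the number of surviving pairs $(x,y)$ is at most $C\,\tfrac{N}{q}L^{d+1}$: $x$ must lie within distance $O(L)$ of a cell interface — there are $N/q^d$ cells, each with $O(q^{d-1}L)$ such sites — and $y$ must lie within interaction range of $x$, i.e.\ among $O(L^{d})$ sites; this yields \VIZ{betterlie}. The iterated commutator is handled the same way: after cancellation its terms are indexed by bounded clusters of lattice sites straddling an interface (two from one sublattice, one from the other, pairwise within interaction range), each bounded by $C\sum_{\ell=1}^{3}\rho_\ell(u(t))\le C/N$; the extra commutator layer contributes one more factor $O(L^{d})$ from the position of the additional site and another from the range over which the rates depend on it, so the number of clusters is at most $C\,\tfrac{N}{q}L^{2d+1}$, giving \VIZ{betterstrang}.

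The hard part will be the smallness-propagation step of part (b): the hierarchy for the discrete derivatives $\DERIV_{\mathbf{x}}u$ does not formally close at any finite order — the equation at order $j$ involves order $j+1$ — so turning the $O(1/N)$ bounds of \VIZ{localbound} into bounds on $\rho_1(u(t)),\rho_2(u(t)),\rho_3(u(t))$ requires exploiting the termination of the hierarchy for the concrete observables (or a weighted Gronwall estimate over all orders), rather than the plain $\ell^1$-estimate of Proposition~\ref{SemigroupDef}. The remaining work — identifying exactly which site-pairs and site-clusters survive the commutator cancellations and counting them — is routine but is what produces the precise powers $L^{d+1}/q$ and $L^{2d+1}/q$; the following two subsections carry this out.
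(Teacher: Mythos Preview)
Your proposal is correct and follows essentially the same route as the paper: reduce operator words to the discrete-derivative norms $\NORMA{\ell}{\cdot}$ (the paper's Lemma~\ref{bound:L2_from_derivatives} and Remark~\ref{bound:L2_on_u_easy}), localise the commutator to the cell interfaces (the paper's Lemmas~\ref{lem:commutator} and \ref{prop:LieBracketLie}), propagate the derivative norms by Duhamel and Gronwall (Proposition~\ref{bound:L2_on_u} for part (a), Lemma~\ref{bound:derivative_of_u} for part (b)), and count boundary pairs/triples exactly as you describe.

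One correction, which removes the difficulty you flag as ``the hard part'': the hierarchy for the discrete derivatives is \emph{lower}-triangular, not upper-triangular. Writing $\partial_t\DX u=\LOPER\DX u+\sum_{|y-x|\le L}\DX c(y,\sigma)\,\DY u(\sigma^x)$, the source involves only $\DY u$ evaluated at the shifted configuration $\sigma^x$. Since $\sup_\sigma|\DY u(\sigma^x)|=\sup_\sigma|\DY u(\sigma)|$, the sup-norm bound closes at first order: $\gamma(t):=\max_x\NORMINF{\DX u(t,\cdot)}$ satisfies $\gamma(t)\le\gamma(0)+C\int_0^t\gamma(s)\,ds$, and plain Gronwall gives \VIZ{1stlocal}. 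The same happens at each order $j$: the source in the equation for $\DERIV_{\mathbf{x}}u$ involves derivatives of order $\le j$ (evaluated at shifted configurations), so the system for $(\rho_1,\ldots,\rho_j)$ is lower-triangular and the generalised Gronwall of Lemma~\ref{gronwall} applies. No termination assumption and no weighted estimate over all orders is needed; this is exactly the paper's Lemma~\ref{bound:derivative_of_u}. Your instinct that ``order $j$ involves order $j+1$'' comes from expanding $\DY u(\sigma^x)=\DY u(\sigma)+\DXY u(\sigma)$, which is unnecessary here.
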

The proof of this theorem is 
given in Section~\ref{proof_main}, while the supporting  results are proved earlier in Sections~\ref{remainder} and ~\ref{commutator}.
Next, we discuss typical examples of macroscopic observables $f$ which are used in KMC simulations
and  also satisfy the assumptions of Theorem~\ref{main}.

\subsection{Examples of observables}\label{section:examples_of_observables}

There is a wide class of macroscopic observable  functions in $\CSS{m}$, that satisfy
\begin{equation}\label{special_observables}
\DX f(\sigma) :=  
              \frac{1}{N}  \phi\bigl( \sigma(x+k_1), \cdots , \sigma(x+k_\ell) \bigr)\PERIOD
                    k_i\in\LATT \COMMA \forall x\in \LATT\COMMA
\end{equation}
A class of functions that satisfies
\VIZ{special_observables}, or more generally \VIZ{localbound}, includes the coverage, spatial correlations, Hamiltonians
and more generally observables of the type
$$
f(\sigma) = \frac{1}{N}\sum_{y\in\LATT} U\bigl(\sigma(y+k_1),  \ldots  ,\sigma(y+k_\ell)\bigr)
\COMMA k_i\in\LATT \PERIOD
$$
 These functions have the property that their
discrete derivatives depend only on a fixed number of points on the lattice that does not scale
with $N$.
In this section we will show that this class of function belong in $\CSS{m}$, $\forall m \in
\mathbb{N}^+$.

\begin{example}[Coverage]
{\rm 
Let $f(\sigma) = \bar \sigma =  \frac{1}{N}\sum_{x\in\LATT} \sigma(x)$, the observable that measures
the mean coverage of the lattice $\LATT$. Then
$$
   \DX f(\sigma) = \frac{1}{N}(\sigma^x(x)-\sigma(x))\COMMA
$$
and in the case $\sigma (x)\in\{0,1\}$ it takes the simple form
$$
   \DX f(\sigma) = \frac{1}{N}(1-2\sigma(x)) \PERIOD
$$
The {\it local average} over a percentage of the domain, defined as
$$f(\sigma) = \frac{1}{N}\sum_{x\in A\subset\LATT} \sigma(x)\COMMA$$ 
is also in the same class.
}
\end{example}
\begin{example}[Spatial correlations]
{\rm 
Let $f(\sigma ; k) = \frac{1}{N}\sum_{x\in\LATT} \sigma(x)\sigma(x+k)$, the mean spatial
correlation of length $k$. Then, when $\sigma (x)\in\{0,1\}$ it takes the form
$$
   \DX f(\sigma) = \frac{1}{N} \bigl(1-2\sigma(x) \bigr) \bigl(\sigma(x+k)+\sigma(x-k)\bigr)\PERIOD
$$
}
\end{example}

In these examples it is obvious that $f\in\CSS{1}$. To such functions we can apply
Lemma~\ref{derivatives:rates} and easily conclude that they belong to $\CSS{m}$ for
$m\leq m_0$, where $m_0$ depends on the form of the observable.

\begin{example}\label{derivatives:observable}
Let $f$ be an  observable of type \VIZ{special_observables} with $\ell=1$ and $k_1=0$,
then
$$
    \DX \DY f(\sigma) = \DX \frac{1}{N}\phi(\sigma(y)) = \frac{1}{N}\phi(\sigma^x(y)) -
                                       \frac{1}{N}\phi(\sigma(y)) = 0\COMMA\;\;\; |x-y|>1\PERIOD
$$
An analogous result holds when $\ell\geq 1$ and $k_i\neq0$ with $|x-y|>c(\ell)$, where the constant
depends on $\ell$ but not on $N$.
\end{example}

Finally, there are  macroscopic  observables that are not of the type \VIZ{special_observables} but  still  
satisfy \VIZ{localbound}
\begin{example}[Variance]
{\rm
Let $f(\sigma) = \frac{1}{N}\sum_{x\in\LATT}(\sigma(x)-\bar \sigma)^2 = \bar \sigma - {\bar\sigma}^2$. Then
$$
  \DX f(\sigma) =  \frac{1}{N}\bigl(1-2\sigma(x)\bigr)\Bigl( 1 - 2\bar\sigma + \frac{2\sigma(x)-1}{N} \Bigr) \PERIOD
$$
It is easy to verify that variance is in $\CSS{2}$ and satisfies \VIZ{localbound}.}
\end{example}

\subsection{Bounds on the remainder}\label{remainder}
In order to establish that the remainders $\mathcal{R}_L(u)$, \VIZ{lie_remainder}, 
or $\mathcal{R}_S(u)$, \VIZ{strang_remainder},  Theorem~\ref{error:global}, are bounded
by constants independent of $N$ we derive estimates for powers of the operators $\LOPA$, $\LOPB$
and their compositions such as  $\LOPA^2\LOPB$. The idea for such estimates is an easy extension of estimates 
on  $\LOPER^2$ acting on the solution of \VIZ{ODE}, which we present next.
First,  we prove that $\LOPER^2 u$ is bounded by the sum of first and second derivatives of $u$.
\begin{lemma}\label{bound:L2_from_derivatives}
 Let $u$ be the solution of equation  \VIZ{ODE}. Then for the operator $\LOPER^2$ the following bound holds
\begin{eqnarray}\label{L2_semibound}
   \NORMINF{\LOPER^2 u(t,\cdotp)} &\leq&  c_1 \sum_{x\in\LATT} \NORMINF{ \DX u(t,\cdotp)} +  c_2
   \sum_{x,y\in\LATT} \NORMINF{ \DXY u(t,\cdotp)} \nonumber \\
   &=& c_1 \NORMA{1}{u(t,\cdotp)} + c_2\NORMA{2}{u(t,\cdotp)} \PERIOD
\end{eqnarray}
\end{lemma}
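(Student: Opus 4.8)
The plan is to compute $\LOPER^2 u$ explicitly from the generator formula \VIZ{generator} together with a discrete Leibniz rule, and then bound the result term by term using only (i) the uniform bound on the rates and (ii) their finite interaction range. Throughout I suppress the index $\omega$ exactly as in Definition~\ref{definition:high_derivatives}, writing $c(x,\sigma)\,\DX u(\sigma)$ as shorthand for the finite sum $\sum_{\omega\in\SIGMA_x} c(x,\omega;\sigma)\DXOMEGA u(\sigma)$, so that $\LOPER u(\sigma) = \sum_{x\in\LATT} c(x,\sigma)\,\DX u(\sigma)$. Here $\bar c := \sup_{x,\omega,\sigma} c(x,\omega;\sigma)<\infty$ and $\kappa_0 := \max_x |\SIGMA_x|<\infty$ are the standing uniformly-bounded, $N$-independent data of the model.

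First I would apply $\LOPER$ a second time and push the outer discrete derivative $\DY$ through the product $c(x,\cdot)\DX u$ by means of the identity $\DY(gh)(\sigma) = g(\sigma^y)\,\DY h(\sigma) + \DY g(\sigma)\,h(\sigma)$, which follows at once from adding and subtracting $g(\sigma^y)h(\sigma)$. Taking $g = c(x,\cdot)$ and $h = \DX u$, and using $\DY\DX u = \DXY u$, this yields
$$
\LOPER^2 u(\sigma) = \sum_{x,y\in\LATT} c(y,\sigma)\,c(x,\sigma^y)\,\DXY u(\sigma) \;+\; \sum_{x,y\in\LATT} c(y,\sigma)\,\bigl(\DY c(x,\cdot)\bigr)(\sigma)\,\DX u(\sigma)\PERIOD
$$
For the first sum I bound every rate factor by $\bar c$ (and pick up a bounded combinatorial factor $\le \kappa_0^2$ from reinstating the suppressed $\omega$-sums), obtaining $c_2\sum_{x,y}\NORMINF{\DXY u(t,\cdot)} = c_2\,\NORMA{2}{u(t,\cdot)}$. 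For the second sum I use the finite interaction range of the rates (recall the Remark following \VIZ{gendecomp}): $\DY c(x,\cdot)\equiv 0$ unless $y$ lies in the bounded neighborhood of $x$ on which $c(x,\cdot)$ depends, so for each fixed $x$ only a constant number $\kappa=\kappa(L,d)$ of sites $y$ contribute, and $\NORMINF{\DY c(x,\cdot)}\le 2\bar c$; summing over those $\le\kappa$ indices $y$ and then over $x$ gives $c_1\sum_x\NORMINF{\DX u(t,\cdot)} = c_1\,\NORMA{1}{u(t,\cdot)}$. Adding the two estimates is precisely \VIZ{L2_semibound}, with $c_1,c_2$ depending only on $\bar c$, $\kappa_0$, $L$, $d$ — hence independent of $N$.

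The one step that genuinely needs care is the second sum: without the finite range of the rates the inner summation over $y$ would contribute a factor proportional to $N$ and the claimed $N$-independent bound would be lost; it is exactly the short-range structure of $c(x,\cdot)$ (the interaction range $L$ of Definition~\ref{decomposition}) that collapses that sum to a constant number of terms. Everything else — the Leibniz rule and bounding each rate by $\bar c$ — is routine. I would also note in passing that the finitely many diagonal contributions $x=y$ cause no difficulty: in the first sum they reduce to a $\bar c^2$-multiple of $\DX u(\sigma)$ and in the second sum they are already among the $\le\kappa$ counted indices, so in both cases they are absorbed into $\NORMA{1}{u(t,\cdot)}$.
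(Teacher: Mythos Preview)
Your proof is correct and follows essentially the same route as the paper: both compute $\LOPER^2 u$ via the discrete Leibniz rule to obtain a second-derivative term $\sum_{x,y} c\,c\,\DXY u$ and a first-derivative term $\sum_{x,y} c\,\DERIV c\,\DX u$, then bound the former by $\bar c^2$ and collapse the latter using the finite interaction range $L$ of the rates (the paper invokes Lemma~\ref{derivatives:rates} for this, which is exactly your observation that $\DY c(x,\cdot)=0$ outside an $L$-neighborhood). Your treatment is in fact slightly more explicit about the constants and the diagonal $x=y$ terms, but the argument is the same.
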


\begin{proof}
By a straightforward calculation 
$$
   \LOPER^2 u(t,\sigma) = \sum_{x,y\in\LATT} c(x,\sigma)c(y,\sigma^x)\DXY u(t,\sigma) -
                          \sum_{x,y\in\LATT} c(x,\sigma)\DX c(y,\sigma)\DY u(t,\sigma)\COMMA
$$
and by taking norms on both sides
\begin{eqnarray*}
    \NORMINF{u(t,\cdotp)} &\leq&  \NORMINF{\sum_{x\in\LATT}\sum_{|x-y|\leq L} c(x,\cdotp) \DY
    u(t,\cdotp)} + c_2 \sum_{x,y\in\LATT} \NORMINF{\DXY u(t,\cdotp)} \\
    &\leq& c_1  \sum_{x\in\LATT} \NORMINF{\DY u(t,\cdotp)}
                             + c_2  \sum_{x,y\in\LATT} \NORMINF{\DXY u(t,\cdotp)} \COMMA
\end{eqnarray*}
where the first inequality follows from the fact that $\DX c(y,\sigma)=0$ when
$|x-y|>L$, see Lemma~\ref{derivatives:rates}, where we show that the derivatives of the rate functions have compact support
that depends only on the length of the interaction $L$.
\end{proof}

\begin{lemma}\label{derivatives:rates}
Let $c$ be a rate function with interactions of range $L$
$$
  c(a,\sigma) = \tilde c\bigl (\sigma(a-L),\dots,\sigma(a+L)\bigr ),\SPACE a\in\LATT \COMMA
$$
then 
$$
  \DX c(a,\sigma)=0, \;\;\;  \forall x\in\LATT \;\;\mathrm{with} \;\; |x-a|>L \COMMA 
$$ 
and
$$ 
\DERIV_{xy} c(a,\sigma)=0, \;\;\;  \forall x,y\in\LATT \;\;\mathrm{with} \;\; |x-y|>2L+1 \PERIOD 
$$
Moreover, for all higher derivatives holds that
$$
\DXK{1} \DXK{2} \ldots\ \DXK{n} f(\sigma) \equiv \prod_{k=1}^n \DXK{k} f(\sigma) = 0\COMMA\;\;\;\;
|x_i-x_j|>2L+1\COMMA\; i\neq j \PERIOD
$$ 
\end{lemma}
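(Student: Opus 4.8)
The plan is to prove all three statements by the same mechanism: a discrete derivative $\DXOMEGA f$ vanishes whenever the update $\sigma \mapsto \CONFNEW$ does not touch any of the lattice sites on which $f$ actually depends. For the rate function $c(a,\sigma) = \tilde c(\sigma(a-L),\dots,\sigma(a+L))$, the set of relevant sites is the ball $\{z \in \LATT : |z-a| \le L\}$. So the first task is to record the elementary observation: if $g(\sigma) = \tilde g(\sigma(z_1),\dots,\sigma(z_r))$ depends only on sites $z_1,\dots,z_r$, and the update at $x$ (with outcome index $\omega$) changes $\sigma$ only at sites lying outside $\{z_1,\dots,z_r\}$, then $g(\CONFNEW) = g(\sigma)$, hence $\DXOMEGA g(\sigma) = 0$.

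First I would treat the single-derivative case. Recall from Definition~\ref{decomposition} (and the examples of updating mechanisms) that the update indexed by $(x,\omega)$ alters $\sigma$ only at sites within distance $L$ of $x$; more to the point, in the short-range setting the updated sites are contained in a neighborhood of $x$ of diameter at most $L$. If $|x-a| > L$ then none of the sites $a-L,\dots,a+L$ on which $c(a,\cdot)$ depends can be among the updated sites — a site $z$ updated by the move at $x$ satisfies $|z-x|$ small, while any relevant site $z$ of $c(a,\cdot)$ satisfies $|z - a| \le L$, and the triangle inequality gives $|x-a| \le |x-z| + |z-a|$, contradicting $|x-a|>L$ once the update radius is accounted for. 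Hence $c(a,\CONFNEW) = c(a,\sigma)$ and $\DX c(a,\sigma) = 0$.

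For the mixed second derivative $\DERIV_{xy} c(a,\sigma) = \DX \DY c(a,\sigma)$, I would expand using Definition~\ref{definition:high_derivatives}: it is a signed sum of $c(a,\cdot)$ evaluated at $\sigma$, $\sigma^x$, $\sigma^y$, $\sigma^{xy}$. If $|x-y| > 2L+1$ then $x$ and $y$ cannot both lie within distance $L$ of the common site $a$ (if both did, $|x-y| \le |x-a| + |a-y| \le 2L$, and the extra $+1$ absorbs the update radius), so at least one of $x,y$ — say $x$ — is far enough from $a$ that the move at $x$ leaves $c(a,\cdot)$ unchanged whether or not the move at $y$ has also been applied. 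Then $c(a,\sigma^{xy}) = c(a,\sigma^y)$ and $c(a,\sigma^x) = c(a,\sigma)$, so the four-term alternating sum cancels in pairs and $\DERIV_{xy} c(a,\sigma) = 0$. The general $n$-fold case is the same argument carried out inductively: given $|x_i - x_j| > 2L+1$ for all $i \ne j$, at most one of the $x_k$ can lie within the influence range of $a$, so all but (at most) one of the difference operators $\DXK{k}$ acts trivially on $c(a,\cdot)$; peeling off a trivial factor $\DXK{k}$ reduces $n$ to $n-1$, and the induction closes once a single $\DXK{}$ hits a function it does not depend on, giving $0$.

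The main obstacle is purely bookkeeping: pinning down precisely which sites an update $(x,\omega)$ changes and hence the exact constant ($L$ versus $2L$ versus $2L+1$) that separates "influences $c(a,\cdot)$" from "does not." Once the convention on the update support is fixed — namely that a move at $x$ changes only sites within the neighborhood $\Omega_x$ of diameter $\le L$ — every statement follows from the triangle inequality and the cancellation of telescoping alternating sums, with no analysis involved.
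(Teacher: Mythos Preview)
Your proposal is correct and uses the same mechanism as the paper: locality of $c(a,\cdot)$ on the ball $\{z:|z-a|\le L\}$, the triangle inequality, and commutativity of the discrete derivatives. For the first two claims your argument is essentially the paper's verbatim (the paper phrases the second case as $\DX(\DY c(a,\sigma))=\DX(0)=0$ when $|y-a|>L$, and symmetrically in $x$, rather than expanding the four-term alternating sum, but this is the same computation).

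For the higher-order claim there is a small but real difference. The paper reads the hypothesis as requiring only \emph{some} pair $i\ne j$ with $|x_i-x_j|>2L+1$; it then commutes $\DXK{i}\DXK{j}$ to the innermost position and invokes the second-derivative result directly, giving
\[
\prod_{k\ne i,j}\DXK{k}\bigl(\DXK{i}\DXK{j}c(a,\sigma)\bigr)=0
\]
in one stroke. You instead assume \emph{all} pairs are separated by more than $2L+1$ and argue that at most one $x_k$ can lie within distance $L$ of $a$. Your argument is correct under that stronger hypothesis, but the ``inductive peeling'' you describe is not really needed: once a single $x_k$ with $|x_k-a|>L$ is identified and $\DXK{k}$ commuted innermost, the expression vanishes immediately. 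The paper's reduction to the $n=2$ case is both cleaner and yields the stronger conclusion (vanishing as soon as any one pair is far apart), which is what is actually used later in the error analysis.
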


\begin{proof}
For the first discrete derivative it is sufficient to observe that if $x\neq y$ then
$\sigma^y(x)=\sigma(x)$. Thus when $a$ has distance from $x$ greater than $L$ the rate function
$c(a,\sigma)$ is equal to $c(a,\sigma^x)$ and the first derivative is zero.

For the second derivative, based on the calculation for the first derivative, we have
$$
  \DX \Bigl ( \DY c(a,\sigma) \Bigr ) = 0\COMMA\;\;\; |y-a|>L\COMMA
$$
or, if we interchange $x$ and $y$,
$$
  \DY \Bigl ( \DX c(a,\sigma) \Bigr ) = 0\COMMA\;\;\; |x-a|>L\PERIOD
$$
Finally,  the second derivative is always zero when $|x-y|>2L+1$.

For the general case, the proof follows from the fact that $\DX \DY  c(a,\sigma) = \DY \DX  c(a,\sigma)$ 
and from the following observation
$$
  \prod_{\substack{k=1 \\ k\neq i,j}}^n \DXK{k} \Bigl ( \DXK{i}\DXK{j}c(a,\sigma)\Bigr ) =0\COMMA\;\;\;
  |x_i-x_j|>2L+1\COMMA\;\;\; i\neq j\COMMA
$$
which is true by the result for the second derivative.
\end{proof}

\begin{proposition}\label{bound:L2_on_u}
Let $u(t,\sigma)$ be the solution of the equation \VIZ{ODE} with initial data in
$\CSS{2}$. Then the operator $\LOPER^2$ satisfies the bounds,
\begin{equation}
   \NORMINF{\LOPER^2 u(t,\cdotp)} \leq C\COMMA
\end{equation}
and
\begin{equation*}
  \NORMA{1}{u(t,\cdotp)} + \NORMA{2}{u(t,\cdotp)} \le C_1 \NORMA{1}{u(0,\cdotp)} + C_2 \NORMA{2}{u(0,\cdotp)} \COMMA
\end{equation*}
where $C, C_1$ and $C_2$ are constants independent of $N$. 
\end{proposition}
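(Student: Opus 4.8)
The plan is to prove the two estimates in Proposition~\ref{bound:L2_on_u} by combining Lemma~\ref{bound:L2_from_derivatives}, which already controls $\NORMINF{\LOPER^2 u(t,\cdotp)}$ by $\NORMA{1}{u(t,\cdotp)}+\NORMA{2}{u(t,\cdotp)}$, with a Gr\"onwall-type argument for the evolution of the discrete-derivative norms along the flow of \VIZ{ODE}. So the real content is the second inequality: showing that $\NORMA{1}{u(t,\cdotp)}$ and $\NORMA{2}{u(t,\cdotp)}$ stay bounded (with $N$-independent constants) in terms of their initial values. Once that is in hand, the first inequality follows immediately by plugging the bound on $\NORMA{1}{u(t,\cdotp)}+\NORMA{2}{u(t,\cdotp)}$ into \VIZ{L2_semibound}, with $C$ depending on the initial norms, the rate bounds, $L$, and $T$, but not on $N$.

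For the second inequality, first I would derive an evolution equation for $\DXOMEGA u(t,\sigma)$. Since $\partial_t u = \LOPER u$, we get $\partial_t \DXOMEGA u = \DXOMEGA \LOPER u$. The key algebraic step is a ``commutation with product rule'' identity: applying the discrete derivative $\DXOMEGA$ to $\LOPER u(\sigma)=\sum_{y,\omega'} c(y,\omega';\sigma)\DERIV_{y,\omega'} u(\sigma)$ produces a term $\sum_{y,\omega'} c(y,\omega';\sigma)\,\DXOMEGA \DERIV_{y,\omega'} u(\sigma)$ plus lower-order ``source'' terms in which a discrete derivative falls on a rate $c(y,\omega';\sigma)$. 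By Lemma~\ref{derivatives:rates} those source terms are supported on $|x-y|\le L$, so when we sum $\NORM{\cdot}$ over $x$ (and $\omega$) the number of surviving terms is bounded by a constant depending only on $L$ (and the dimension $d$), not on $N$. This yields a differential inequality of the schematic form
$$
\frac{d}{dt}\,\NORMA{1}{u(t,\cdotp)} \;\le\; \Gamma_1\,\NORMA{1}{u(t,\cdotp)} \COMMA
$$
where $\Gamma_1$ depends on $\sup c$, on $\sup \NORM{\DERIV c}$, and on $L,d$. Gr\"onwall's inequality then gives $\NORMA{1}{u(t,\cdotp)}\le \EXP{\Gamma_1 t}\NORMA{1}{u(0,\cdotp)}$, an $N$-independent bound on $[0,T]$; this is essentially Proposition~\ref{SemigroupDef} specialized to $m=1$.

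Next I would run the same argument one level up for $\NORMA{2}{u(t,\cdotp)}$. Here $\partial_t \DXY u = \DXY \LOPER u$, and expanding $\DXY(c\,\DERIV u)$ by the discrete Leibniz rule produces: (i) the ``principal'' term $\sum c\,\DXY\DERIV u$, which is controlled by $\NORMA{2}{u(t,\cdotp)}$ after summing over $x,y$; (ii) terms where exactly one derivative hits a rate, of the form $(\DX c)\,(\DY \DERIV u)$ or $(\DY c)(\DX \DERIV u)$ — by Lemma~\ref{derivatives:rates} each is supported on $|x-\cdot|\le L$ resp.\ $|y-\cdot|\le L$, so summing gives a constant times $\NORMA{1}{u(t,\cdotp)}$ (up to a further bounded sum); and (iii) a term where both derivatives hit the rate, $(\DXY c)\,\DERIV u$, supported on $|x-y|\le 2L+1$, contributing another constant times $\NORMA{1}{u(t,\cdotp)}$. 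Collecting these, one obtains
$$
\frac{d}{dt}\,\NORMA{2}{u(t,\cdotp)} \;\le\; \Gamma_2\,\NORMA{2}{u(t,\cdotp)} + \Gamma_3\,\NORMA{1}{u(t,\cdotp)} \COMMA
$$
again with $N$-independent constants. Combining with the already-established bound on $\NORMA{1}{u(t,\cdotp)}$ and applying Gr\"onwall once more yields $\NORMA{1}{u(t,\cdotp)}+\NORMA{2}{u(t,\cdotp)}\le C_1\NORMA{1}{u(0,\cdotp)}+C_2\NORMA{2}{u(0,\cdotp)}$ for $t\in[0,T]$, with $C_1,C_2$ depending only on $T$, $L$, $d$, and the rate bounds. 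The main obstacle is bookkeeping: carefully carrying out the discrete Leibniz expansions of $\DXY(c\,\DERIV u)$ and verifying — via Lemma~\ref{derivatives:rates} — that every term in which a derivative falls on a rate has compact support of $L$-dependent size, so that the double sum over $x,y\in\LATT$ collapses to a finite, $N$-independent number of summands; there is no analytic difficulty beyond Gr\"onwall, only the need to keep the combinatorics of the supports straight. One should also note, for rigor, that on the finite lattice $\LATT$ all operators are bounded, so differentiating $\DXY u(t,\cdotp)$ in $t$ and interchanging sums is legitimate, and the constants obtained are manifestly independent of $N$ because they are built only from $\sup_{\omegax,\sigma}c(x,\omega;\sigma)$, the finite-support bounds of Lemma~\ref{derivatives:rates}, and $L,d,T$.
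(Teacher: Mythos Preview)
Your strategy matches the paper's: reduce via Lemma~\ref{bound:L2_from_derivatives} to controlling $\NORMA{1}{u(t,\cdot)}$ and $\NORMA{2}{u(t,\cdot)}$, derive evolution equations for $\DX u$ and $\DXY u$, use the compact-support Lemma~\ref{derivatives:rates} on the rate-derivative terms, and close with a Gr\"onwall argument. There is, however, a real gap in your treatment of the principal term (i). You assert that $\sum_z c(z,\sigma)\DZ\DXY u=\LOPER(\DXY u)$ is ``controlled by $\NORMA{2}{u(t,\cdot)}$ after summing over $x,y$'', but $\sum_{x,y}\NORM{\LOPER(\DXY u)}$ is only bounded by $\sup c\cdot\NORMA{3}{u}$ or by an $O(N)$ multiple of $\NORMA{2}{u}$, neither of which is admissible here. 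The paper does not try to bound this term in norm; instead it writes the Duhamel representation $\DXY u(t)=\EXP{t\LOPER}\DXY u(0)+\int_0^t\EXP{(t-s)\LOPER}F(s)\,ds$ and uses the contraction property $\NORM{\EXP{t\LOPER}g}\le\NORM{g}$, so the principal part contributes only $\NORM{\DXY u(0,\cdot)}$ and one obtains directly the \emph{integral} inequality $\vartheta(t)\le\vartheta(0)+\int_0^t(\bar c_2\,\vartheta(s)+\bar c_3\,\varphi(s))\,ds$ with $\vartheta=\NORMA{2}{u}$, $\varphi=\NORMA{1}{u}$, to which the Gr\"onwall Lemma~\ref{gronwall} applies. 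Equivalently, dissipativity of $\LOPER$ gives $D^+\NORM{g(t)}\le\NORM{F(t)}$ for $\partial_t g=\LOPER g+F$; what you may not do is bound $\NORM{\LOPER g}$ pointwise and still keep $N$-independent constants. For $m=1$ you sidestep this by invoking Proposition~\ref{SemigroupDef}; for $m=2$ the same mechanism has to be made explicit.

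A smaller bookkeeping slip: in (ii) the terms with exactly one derivative on a rate, e.g.\ $(\DY c(z,\sigma))\,\DXZ u$, carry a \emph{second} discrete derivative of $u$; after summing over $x,y$ with the constraint $|z-y|\le L$ they contribute a multiple of $\NORMA{2}{u(t,\cdot)}$, not $\NORMA{1}{u(t,\cdot)}$. Only (iii), where both derivatives hit the rate, feeds $\NORMA{1}$ into the inequality for $\NORMA{2}$. This does not change the form of your final differential inequality, but the attribution of the terms as written is incorrect.
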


\begin{proof}
We will bound the right hand side of the equation \VIZ{L2_semibound} thus we need estimates on the
first and the second derivatives of $u$. For the sake of brevity we use a vectorial notation 
$\LOPER f = \mathbf{c}(\sigma)\cdot\nabla_\sigma f(\sigma) \equiv \sum_x c(x,\sigma) \DX f(\sigma)$.
The governing equations for $u$, $v_1\equiv\DX u$, $v_2\equiv\DX u$ and 
$w \equiv\DX\DY u$ are
\begin{eqnarray*}
 \partial_t u   &=& \mathbf{c}(\sigma) \cdot \nabla_\sigma u \\
 \partial_t v_1 &=& \mathbf{c}(\sigma) \cdot \nabla_\sigma v_1 +
            \DX\mathbf{c}(\sigma)\cdot\nabla_\sigma u(\sigma^x) \\
 \partial_t v_2 &=& \mathbf{c}(\sigma) \cdot \nabla_\sigma v_2 +
            \DY\mathbf{c}(\sigma)\cdot\nabla_\sigma u(\sigma^y) \\
 \partial_t w   &=& \mathbf{c}(\sigma) \cdot \nabla_\sigma w +
             \DY\mathbf{c}(\sigma)\cdot\nabla_\sigma v_1(\sigma^y) +
             \DX\mathbf{c}(\sigma)\cdot\nabla_\sigma v_2(\sigma^x) +
             \DXY \mathbf{c}(\sigma)\cdot\nabla_\sigma u(\sigma^{xy})\PERIOD
\end{eqnarray*}

First we bound the first derivative writing the solution for $v(t,\sigma)$
\begin{equation}\label{aux}
    \DX u(t,\sigma) = \EXP{\LOPER t} u(0,\sigma) + \int_0^t \EXP{(t-s)\LOPER} \sum_{|y-x|\leq N} \DX
c(y,\sigma) \DY u(s,\sigma^x)\, ds\PERIOD
\end{equation}
By taking the norms and summing over all $x\in\LATT$
\begin{eqnarray*}
     \sum_{x\in\LATT} \NORM{\DX u(t,\cdotp)} &\leq&  \sum_{x\in\LATT}
                      \NORM{\DX u(0,\cdotp)} + c_1 \int_0^t \sum_{x\in\LATT} \sum_{|y-x|\leq L}\NORM{\DY u(s,\cdotp)}\, ds\PERIOD
\end{eqnarray*}
By setting 
\begin{equation}\label{phi_norm}
  \varphi(t)  = \NORMA{1}{u(t,\cdotp)} = \sum_{x\in\LATT} \NORM{\DX u(t,\cdotp)} \COMMA
\end{equation}
we obtain
$$
 \varphi(t)  \leq  \varphi(0) + \bar c_1\int_0^t \varphi(s)\, ds\PERIOD
$$
Similarly, for the second derivatives we have, by using 
Lemma~\ref{derivatives:rates},
\begin{eqnarray*}
       \partial_t \DXY u(t,\sigma) = \LOPER \DXY u(t,\sigma) &+& \sum_{|z-y|\leq L}  \DY c(z,\sigma) 
                         \DXZ u(t,\sigma^y) + 
                         \sum_{|z-x|\leq L}  \DX c(z,\sigma) \partial_{yz} u(t,\sigma^x) \\
                         &+& \sum_{\substack{|z-x|\leq L \\ |z-y|\leq L}} \DXY c(z,\sigma) 
                         \DZ u(t,\sigma^{xy}) \chi_{C_{2L}}(x,y)\COMMA
\end{eqnarray*}
where $\chi_{C_{2L}}$ is the characteristic function and 
${C_{2L}}=\{ (x,y)\in\LATT^2\SEP |x-y|<2L \}$.
The solution of the above equation is expressed as 
\begin{eqnarray*}
\DXY u(t,\sigma) =\EXP{t \LOPER } \DXY u(0,\sigma) &+& \int_0^t \EXP{(t-s)\LOPER} \Biggl [\sum_{|z-x|\leq L} 
                \DY c(z,\sigma) \DXZ u(s,\sigma^y) + \sum_{|z-y|\leq L}  \DX c(z,\sigma)\partial_{yz} u(s,\sigma^x) \\
            &+& \sum_{\substack{|z-x|\leq L \\ |z-y|\leq L}} \DXY  c(z,\sigma) \partial_{z}
                u(s,\sigma^{xy}) \chi_{C_{2L}}(x,y)   \Biggr ]\, ds\PERIOD
\end{eqnarray*}
Thus, by using the contraction property of the semigroup and the fact that the discrete derivatives
of the rates are bounded functions, we have the estimate
\begin{equation}\label{2ndestimate}
\begin{aligned}
   \NORM{\DXY  u(t,\cdot)} \leq \NORM{\DXY u(0,\cdot)} 
       &+ c_1\int_0^t \sum_{|z-x|\leq L} \NORM{\DXZ u(s,\cdot)} ds\\
       &+ c_2\int_0^t \sum_{|z-y|\leq L} \NORM{\DYZ u(s,\cdot)}\,ds\\
       &+ c_3\int_0^t \sum_{\substack{|z-x|\leq L \\ |z-y|\leq L}}
      \NORM{\DZ u(s,\cdot)} \chi_{C_{2L}}(x,y)\, ds\PERIOD
\end{aligned}
\end{equation}
By summing over all $x,y\in\LATT$ and setting 
\begin{equation}\label{theta_norm}
 \vartheta(t)  = \NORMA{2}{u(t,\cdotp)} = \sum_{x,y\in\LATT} \NORM{\DXY u(t,\cdot)}\COMMA
\end{equation}
we obtain
$$
   \vartheta(t) \leq \vartheta(t) + \bar c_2 \int_0^t \vartheta(s) ds + \bar c_3 \int_0^t\varphi(s)\, ds\COMMA
$$
where both $\bar c_2$ and $\bar c_3$ depend on $L$ but not on $N$.
However, from Lemma~\ref{gronwall} (see Appendix) we have 
$$
    \varphi(t) \leq \tilde c_1 \varphi(0) = \tilde c_1 \NORMA{1}{u(0,\cdotp)} <C\COMMA
$$
where the last inequality follows from the assumption that $u(0,\sigma)\in \CSS{1}$.
Furthermore, from Lemma~\ref{gronwall} 
$$
   \vartheta(t) \leq \tilde c_2 \vartheta(0) + \tilde c_3 \varphi(0)\COMMA
$$
and the second term is bounded from the previous argument whereas the first term is equal to 
$$
   \vartheta(0) = \NORMA{2}{u(0,\cdotp)} < C\COMMA
$$
which is true because the initial data are in $\CSS{2}$.
Finally, we obtain  from Lemma~\ref{bound:L2_from_derivatives},
$$
\NORMINF{\LOPER^2 u(t,\cdotp)} \leq C_1\varphi(t) + C_2\vartheta(t) \leq C \PERIOD
$$
\end{proof}

\begin{remark}\label{bound:L2_on_u_easy}
{\rm
The same result can be obtained if we notice that
the function $v(t,\sigma)=\LOPER^2 u(t,\sigma)$ satisfies the equation \VIZ{ODE}. Then
the solution
can be written as $v(t,\sigma)=\EXP{t\LOPER^2} v(0,\sigma)$ and by taking the norm on both sides we
get the estimate
$$
  \NORMINF{\LOPER^2 u(t,\cdotp)} \leq \NORM{\EXP{t\LOPER^2}u(0,\cdotp)}  \leq \NORM{u(0,\cdotp)} \leq C\COMMA
$$
where the second inequality follows from the fact that
$\LOPER^2$ generates a contraction semigroup.
However, in order to get bounds for quantities like $\LOPA\LOPB u$, it is sufficient to observe from 
Lemma~\ref{bound:L2_from_derivatives} that
\begin{eqnarray*}
     \NORMINF{\LOPA\LOPB u(t,\cdotp)} &\leq& c_1 \sum_{\substack{x\in\LATT^1\\y\in\LATT^2}} 
               \NORMINF{\DXY u(t,\cdotp)} + c_2 \sum_{x\in\LATT^1} \NORMINF{\DX u(t,\cdotp)} \\ 
       &\leq& \NORMA{1}{u(t,\cdotp)} + \NORMA{2}{u(t,\cdotp)}
\end{eqnarray*}
and the norms on the right hand side are bounded from Proposition~\ref{bound:L2_on_u}. 
}
\end{remark}

Our last goal for this section is to prove that the remainders in the Lie and the Strang scheme, 
\VIZ{lie_remainder} and \VIZ{strang_remainder} respectively, are independent of the size of the
lattice. To achieve this, we first have to bound third and fourth powers of combinations of the operators $\LOPA$ and $\LOPB$
arising in \VIZ{lie_remainder} and \VIZ{strang_remainder}. Then, as
in Remark~\ref{bound:L2_on_u_easy}, using a more general form of
Lemma~\ref{bound:L2_from_derivatives} it is easy to prove that all relevant combinations of $\LOPA$ and $\LOPB$
are also bounded by constants independent of $N$.
We will present the general idea of the proof, rather than showing all the technical details that  anyhow
follow the same idea as in Proposition~\ref{bound:L2_on_u}. First we give a definition of the discrete
derivatives that generalizes Definition~\ref{definition:high_derivatives}. 

\begin{definition}
Let $\mathbf{x}=(x_1,\ldots,x_m)\in\LATT^m$ and  $\forall k \in \mathbb{N}\, , k \leq m$ define
$a\in\LATT^k$, the $k$-dimensional multi-index of $k$-tuples of $\mathbf{x}$. 
As in Definition~\ref{definition:derivative}, the discrete derivative with respect to $a$ is
$$
  \DERIV_{a} f(\sigma)= \DERIV_{a_1\ldots a_k} f(\sigma)\COMMA
$$
and we define
$\DERIV_{-a} f(\sigma)$
the derivative with respect to all variables in $\mathbf{x}=(x_1,\ldots,x_m)$ that are not contained in  $a$.
\end{definition}

Using this definition we are able to write a general form of the
governing equation for the $m$-th discrete derivative,
\begin{eqnarray}
    \partial_t \DERIV_\mathbf{x} u(t,\sigma) &=& \sideset{}{'}\sum_{0\leq|\alpha|\leq m}\DERIV_{\alpha} c(\sigma)
     \cdot \nabla_\sigma \DERIV_{-\alpha}  u(t,\sigma^{\alpha})\nonumber \\
    &=& \LOPER \DERIV_\mathbf{x} u(t,\sigma) + \sideset{}{'}\sum_{1\leq|\alpha|\leq m} \DERIV_{\alpha} c(\sigma)
    \cdot \nabla_\sigma \DERIV_{-\alpha} u(t,\sigma^{\alpha}) \COMMA
\end{eqnarray}
where the prime in the summation symbol means that we sum over tuples without distinguishing the
order of the variables, e.g., $\DXY=\DYX$. Using this representation and the
Remark~\ref{remark:gronwall} we can apply the same idea as in
Proposition~\ref{bound:L2_on_u} and prove bounds for the operator $\LOPER^k,k\in\mathbb{N}^+$,
with initial data in $\CSS{k}$.

\subsection{Bounds on the commutators}\label{commutator}
The constants in the local error estimate derived in Lemma~\ref{lem:local_error} involve bounds on
the commutators of the splitting
operators $\LOPER_1$ and $\LOPER_2$. We prove that these commutators are bounded operators on the
spaces $\CSS{m}$, independently of the system size $N$. 

The error analysis quantifies the intuitive link of the approximation error to the commutator
$[\LOPA,\LOPB]$ of the operators
$\LOPA$ and $\LOPB$. The commutator is directly related to the geometric decomposition and the range
of particle interactions. In order to demonstrate this relation more specifically we discuss an
example of Ising-type interacting system in which the events (updates) occur only at a single site $x\in\LATT$.

\smallskip
The error estimates in Lemma~\ref{lem:local_error} link the local error to the commutator of the
operators $\LOPER_1$ and $\LOPER_2$. 
In principle the commutator can be computed explicitly in terms of the rates $c(x,\omega;\sigma)$
although general formulae become too complicated
and impractical. Therefore we give an example for a specific example of single site events, i.e.,
$\omega = \{x\}$. The example also demonstrates
a procedure that is used for more involved cases. First we evaluate the commutators
associated with the decomposition of the lattice into
disjoint sub-lattices (Definition~\ref{decomposition}).

\begin{lemma}\label{lem:commutator}
Let $\LOPA,\LOPB$ be two operators defined by
\begin{equation*}
 \LOPA f(\sigma) = \ISINGOP{\CSET_1}{f}\COMMA\;\;\mbox{and}\;\;
 \LOPB f(\sigma) = \ISINGOP{\CSET_2}{f}\COMMA
\end{equation*}
and $\CSET_1,\CSET_2 \subset\LATT$ with $\mathrm{dist}(\CSET_1,\CSET_2)> L$. Then $\LOPA$
and $\LOPB$ commute, i.e.,
\begin{equation*}
 [\LOPA,\LOPB] = 0\PERIOD
\end{equation*}
\end{lemma}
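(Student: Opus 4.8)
The plan is to expand both compositions $\LOPA\LOPB f$ and $\LOPB\LOPA f$ directly from the definition and show they agree term by term. Writing $\LOPA f(\sigma) = \sum_{x\in\CSET_1} c(x,\sigma)[f(\sigma^x)-f(\sigma)]$ and similarly for $\LOPB$ over $\CSET_2$, I would first compute
$$
\LOPA\LOPB f(\sigma) = \sum_{x\in\CSET_1}\sum_{y\in\CSET_2} c(x,\sigma)\Bigl( c(y,\sigma^x)[f(\sigma^{xy})-f(\sigma^x)] - c(y,\sigma)[f(\sigma^y)-f(\sigma)]\Bigr)\COMMA
$$
and the analogous expression for $\LOPB\LOPA f$ with the roles of $x,y$ and $\CSET_1,\CSET_2$ swapped. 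The commutator is then a double sum over $x\in\CSET_1$, $y\in\CSET_2$ of the difference of the corresponding summands.

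The key observation is that $\mathrm{dist}(\CSET_1,\CSET_2)>L$ forces $|x-y|>L$ for every pair appearing in the sum, and for single-site events $L$ bounds the range of the rate functions (as in Lemma~\ref{derivatives:rates}). Hence: (i) $c(x,\cdot)$ depends on $\sigma$ only through sites within distance $L$ of $x$, none of which is flipped by the update at $y$, so $c(x,\sigma^y)=c(x,\sigma)$; symmetrically $c(y,\sigma^x)=c(y,\sigma)$. (ii) The updates at $x$ and $y$ commute as maps on configurations, $(\sigma^x)^y=(\sigma^y)^x=:\sigma^{xy}$, since $x\neq y$ (distinct sites, because the distance is positive). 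Substituting these identities into the expanded summands, every term of $\LOPA\LOPB f(\sigma)$ becomes $c(x,\sigma)c(y,\sigma)\bigl[f(\sigma^{xy})-f(\sigma^x)-f(\sigma^y)+f(\sigma)\bigr]$, which is manifestly symmetric in the pair $(x,y)$ and therefore equals the corresponding term of $\LOPB\LOPA f(\sigma)$. Thus $[\LOPA,\LOPB]f(\sigma)=0$ for every $\sigma$ and every $f$, giving $[\LOPA,\LOPB]=0$.

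The only mild subtlety — and the step I would be most careful about — is justifying the rearrangement and cancellation inside the (possibly infinite, on $\Lambda=\Z^d$) double sum: one should note that for $f\in\CSS{1}$ the sums converge absolutely by the Proposition following Definition~\ref{definition:derivative} together with the boundedness of the rates, so the termwise manipulation and reindexing are legitimate. On a finite lattice $\LATT$ this is immediate and the computation is purely algebraic. No estimates are needed beyond this; the result is an exact identity.
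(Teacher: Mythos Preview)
Your proposal is correct and follows essentially the same approach as the paper: both expand $\LOPA\LOPB f(\sigma)$, invoke the distance hypothesis to obtain $c(x,\sigma^y)=c(x,\sigma)$ and $c(y,\sigma^x)=c(y,\sigma)$ together with $(\sigma^x)^y=(\sigma^y)^x$, and reduce the composition to the manifestly symmetric double sum $\sum_{x,y} c(x,\sigma)c(y,\sigma)\bigl[f(\sigma^{xy})-f(\sigma^x)-f(\sigma^y)+f(\sigma)\bigr]$. The paper then reassembles this into $\LOPB\LOPA f$ rather than expanding both sides separately, but the content is identical; your remark about absolute convergence on the infinite lattice is a welcome extra care that the paper leaves implicit.
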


\begin{proof} The proof follows from the straightforward calculation based on the fact that
$c(x,\sigma^y)=c(x,\sigma)$ when $x\in
\CSET_1$ and $y\in \CSET_2$ or vice versa and $f(\sigma^{xy})=f(\sigma^{yx})$. 
By a direct calculation we get
\begin{align*}
  \LOPA\LOPB f(\sigma) &= \sum_{x\in \CSET_1} c(x,\sigma) \Bigl[ \LOPB f(\sigma^x)-\LOPB f(\sigma)\Bigr] \\
    &= \sum_{x\in \CSET_1}\sum_{y\in \CSET_2} c(x,\sigma)c(y,\sigma)\Bigl(f(\sigma^{xy})-f(\sigma^x)-f(\sigma^y)+f(\sigma) \Bigr) \\
    &= \sum_{y\in \CSET_2}c(y,\sigma) \sum_{x\in \CSET_1} c(x,\sigma)\Bigl(f(\sigma^{xy})-f(\sigma^x)-f(\sigma^y)+f(\sigma) \Bigr) \\
    &= \sum_{y\in \CSET_2}c(y,\sigma) \Bigl(  \sum_{x\in\CSET_1}c(x,\sigma^y)[f(\sigma^{yx})-f(\sigma^y)] - \sum_{x\in \CSET_1}
        c(x,\sigma)[f(\sigma^x)-f(\sigma)] \Bigr)\\
    &= \sum_{y\in \CSET_2} c(y,\sigma) \Bigl[ \LOPA f(\sigma^y) -\LOPA f(\sigma) \Bigr] \\
    &=  \LOPB\LOPA f(\sigma)\PERIOD
\end{align*}
\end{proof}

\begin{lemma}\label{prop:LieBracketLie}
Let $\CSET_1$ and $\CSET_2$ be such that $\CSET_i = \CSET_i^o \cup \CSET_i^\partial$, where
$\CSET_i^o := \{ x\in\CSET_i \: |  \: \DIST(x,(\CSET_i)^c) >  L \}$, where $A^c$ is the
complement of set $A$. With further decomposition $\CSET_i^o = \CSET^{oo}_i + \CSET^{o\partial}_i$
where 
$\CSET_i^{oo} := \{ x\in\CSET_i \: |  \: \DIST(x,(\CSET_i)^c) >  2L \}$ (see
Figure~\ref{fig:sublattice}).
Let $\LOPER_i = \LOPER_i^o+\LOPER_i^{\partial}$ and $\LOPER_i^o = \LOPER_i^{oo}
+ \LOPER_i^{o\partial},\; i=1,2$ be the corresponding decomposition of the generator $\LOPER$,
then
\begin{equation*}
 [\LOPA,\LOPB] = [\LOPA^{\partial},\LOPB^{\partial}] 
\end{equation*}
and
\begin{equation*}
 [\LOPA,[\LOPA,\LOPB]]  =  [[\LOPA^{o\partial},\LOPA^\partial],\LOPB^\partial]
      + [\LOPA^\partial, [\LOPA^\partial,\LOPB^\partial]] \PERIOD
\end{equation*}
\end{lemma}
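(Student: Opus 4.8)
The plan is to reduce both commutator identities to the locality statement of Lemma~\ref{lem:commutator}: two single-site generators supported on sets separated by more than $L$ commute. The key observation is that the decomposition $\LOPER_i = \LOPER_i^o + \LOPER_i^\partial$ with $\LOPER_i^o = \LOPER_i^{oo} + \LOPER_i^{o\partial}$ isolates, for each sublattice $i$, the ``deep interior'' part $\LOPER_i^{oo}$ whose update sites lie at distance more than $2L$ from the complement $(\CSET_i)^c$, and in particular more than $L$ from \emph{any} update site of the other sublattice (since the other sublattice's sites live in $(\CSET_i)^c$, and even its closure stays at distance $>L$ from $\CSET_i^{oo}$). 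Thus $\LOPER_1^{oo}$ commutes with every piece of $\LOPER_2$, and symmetrically $\LOPER_2^{oo}$ commutes with every piece of $\LOPER_1$.

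First I would prove the first identity. Expand $[\LOPA,\LOPB] = [\LOPA^{oo}+\LOPA^{o\partial}+\LOPA^\partial,\; \LOPB^{oo}+\LOPB^{o\partial}+\LOPB^\partial]$ by bilinearity into nine commutators. By Lemma~\ref{lem:commutator}, any bracket in which at least one of the two operators is a ``deep interior'' piece $\LOPA^{oo}$ or $\LOPB^{oo}$ vanishes, because that piece's support is at distance $>L$ from the entire support of the other sublattice's generator. This kills the six brackets involving an $oo$ superscript. For the remaining three, $[\LOPA^{o\partial},\LOPB^{o\partial}]$, $[\LOPA^{o\partial},\LOPB^\partial]$, $[\LOPA^\partial,\LOPB^{o\partial}]$, I note that $\CSET_1^{o\partial}$ consists of sites at distance in $(L,2L]$ from $(\CSET_1)^c$, so an update based at such a site influences only sites at distance $\le L$ from it, hence at distance $>0$ but in fact still more than... — more carefully, one checks that an $o\partial$ site of $\CSET_1$ together with its $L$-neighborhood stays within $\CSET_1^o$, which is disjoint from (and at distance $>L$ from) every site of $\CSET_2$ \emph{and} its $L$-closure that could be touched by $\LOPB^{o\partial}$ or $\LOPB^\partial$. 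Hence these three brackets also vanish by Lemma~\ref{lem:commutator}, leaving only $[\LOPA^\partial,\LOPB^\partial]$, which proves $[\LOPA,\LOPB] = [\LOPA^\partial,\LOPB^\partial]$.

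Next I would prove the second identity by applying the first one twice. Since $[\LOPA,\LOPB] = [\LOPA^\partial,\LOPB^\partial]$, we have $[\LOPA,[\LOPA,\LOPB]] = [\LOPA^{oo}+\LOPA^{o\partial}+\LOPA^\partial,\; [\LOPA^\partial,\LOPB^\partial]]$. The operator $[\LOPA^\partial,\LOPB^\partial]$ is a (difference of) compositions of generators whose update sites lie in the boundary layers $\CSET_1^\partial$ and $\CSET_2^\partial$; its total ``support'' — the set of sites whose spin values can be read or modified — is contained in an $O(L)$-neighborhood of $\CSET_1^\partial \cup \CSET_2^\partial$. The deep-interior piece $\LOPA^{oo}$, being based at sites more than $2L$ inside $\CSET_1$, commutes with this (its update neighborhood stays $>L$ away from that support), so the $\LOPA^{oo}$-bracket drops out. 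This leaves $[\LOPA^{o\partial},[\LOPA^\partial,\LOPB^\partial]] + [\LOPA^\partial,[\LOPA^\partial,\LOPB^\partial]]$. For the first of these, one uses the Jacobi identity $[\LOPA^{o\partial},[\LOPA^\partial,\LOPB^\partial]] = [[\LOPA^{o\partial},\LOPA^\partial],\LOPB^\partial] + [\LOPA^\partial,[\LOPA^{o\partial},\LOPB^\partial]]$, and then argues that $[\LOPA^{o\partial},\LOPB^\partial] = 0$ because $\CSET_1^{o\partial}$-based updates, together with their $L$-neighborhoods, stay inside $\CSET_1^o$, which is separated by more than $L$ from $\CSET_2^\partial$ — again Lemma~\ref{lem:commutator}. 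Combining, $[\LOPA,[\LOPA,\LOPB]] = [[\LOPA^{o\partial},\LOPA^\partial],\LOPB^\partial] + [\LOPA^\partial,[\LOPA^\partial,\LOPB^\partial]]$, as claimed.

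The main obstacle, and the step requiring genuine care rather than bookkeeping, is the bound on the ``support'' of nested commutators and verifying the distance inequalities precisely: one must track how far composing two or three generators propagates dependence on spin values (each composition with a range-$L$ generator enlarges the relevant site set by $L$), and confirm that the buffer widths $L$ and $2L$ in the definitions of $\CSET_i^o$ and $\CSET_i^{oo}$ are exactly what is needed so that the appropriate pieces remain at distance $>L$ and the single-site commutation lemma applies. Everything else is bilinear expansion of brackets plus the Jacobi identity; I would state the support/distance bookkeeping as a short sublemma (or an explicit remark referencing Figure~\ref{fig:sublattice}) and then let the two identities fall out mechanically.
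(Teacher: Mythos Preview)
Your proposal is correct and rests on the same underlying idea as the paper: reduce everything to pairwise applications of Lemma~\ref{lem:commutator} by checking that the relevant sets $\CSET_i^{oo}$, $\CSET_i^{o\partial}$, $\CSET_i^\partial$ are separated by more than $L$. Two small differences in execution are worth noting.

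For the first identity you expand with the three-fold decomposition $\LOPER_i=\LOPER_i^{oo}+\LOPER_i^{o\partial}+\LOPER_i^\partial$ and examine nine brackets. The paper uses only the two-fold decomposition $\LOPER_i=\LOPER_i^o+\LOPER_i^\partial$: since every point of $\CSET_1^o$ lies at distance $>L$ from $(\CSET_1)^c\supset\CSET_2$ (and symmetrically), three of the four resulting brackets vanish immediately by Lemma~\ref{lem:commutator}, leaving only $[\LOPA^\partial,\LOPB^\partial]$. Your nine-term expansion is not wrong, just unnecessarily fine at this stage; the distinction between $oo$ and $o\partial$ is only needed for the nested commutator.

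For the second identity you invoke the Jacobi identity on $[\LOPA^{o\partial},[\LOPA^\partial,\LOPB^\partial]]$; the paper instead writes $[\LOPA^o,[\LOPA^\partial,\LOPB^\partial]]=[\LOPA^o,\LOPA^\partial\LOPB^\partial]-[\LOPA^o,\LOPB^\partial\LOPA^\partial]$, simplifies each piece by hand using $[\LOPA^o,\LOPB^\partial]=0$, and only then refines to $\LOPER_1^o=\LOPER_1^{oo}+\LOPER_1^{o\partial}$ together with $[\LOPA^{oo},\LOPA^\partial]=0$. Both routes are short and equivalent. One remark on your version: the step where $\LOPA^{oo}$ commutes with the operator $[\LOPA^\partial,\LOPB^\partial]$ is phrased via a ``support'' argument, but Lemma~\ref{lem:commutator} is stated only for single-site generators. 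The clean justification (which you effectively use later anyway) is that $\LOPA^{oo}$ commutes with $\LOPA^\partial$ and with $\LOPB^\partial$ separately, each by Lemma~\ref{lem:commutator}, and therefore commutes with any polynomial in them, in particular with their commutator.
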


\begin{proof} The proof of the first statement follows directly from Lemma~\ref{lem:commutator} by
observing that
$\DIST(\CSET_1^o,\CSET_2^o) = 2L$ and
$$
   \DIST(\CSET_1^o,\CSET_2^\partial) = \mathrm{dist}(\CSET_1^\partial,\CSET_2^o) = L\PERIOD
$$
For the second statement, using the same lemma, we compute
\begin{equation*}
   [\LOPA,[\LOPA,\LOPB]]   =  [\LOPA^o,\LOPA^\partial\LOPB^\partial] -
        [\LOPA^o,\LOPB^\partial\LOPA^\partial] + [\LOPA^\partial,[\LOPA^\partial,\LOPB^\partial]]\PERIOD
\end{equation*}
The first term on the right hand side can be further simplified
\begin{align*}
 [\LOPA^o,\LOPA^\partial\LOPB^\partial] &=  \LOPA^o\LOPA^\partial\LOPB^\partial-
\LOPA^\partial\LOPB^\partial\LOPA^o 
                                         = \LOPA^o\LOPA^\partial\LOPB^\partial -
\LOPA^\partial\LOPA^o\LOPB^\partial \\
                                        &= [\LOPA^o,\LOPA^\partial]\LOPB^\partial
                                         = [\LOPA^{oo} +
\LOPA^{o\partial},\LOPA^\partial]\LOPB^\partial \\
                                        &= [\LOPA^{o\partial},\LOPA^\partial]\LOPB^\partial\COMMA
\end{align*}
where in the second equation we used the fact that $\LOPB^\partial\LOPA^o =\LOPA^o\LOPB^\partial $
and in the last equation
$[\LOPA^{oo},\LOPA^\partial]=0$. The same procedure leads to simplifying
the second term but the third cannot be simplified further. Combining all these steps
we obtain the result of the proposition.
\end{proof}

\begin{figure}[ht]
  \centerline{%
        \includegraphics[scale=0.4]{./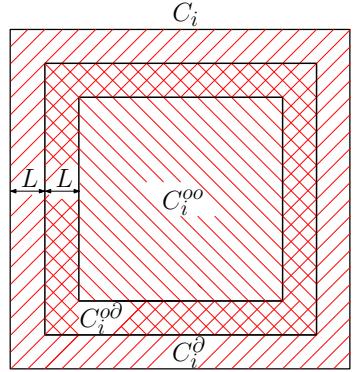}
  }%
  \caption{\label{fig:sublattice} Sub-lattice partitioning. Note that we use the notation
           $\partial C_i$ to denote $\bar C \setminus C$, see also Figure~\ref{fig:lattice_partition}.}
\end{figure}

The estimation of  the commutator in Theorem~\ref{main} requires {\em local estimates} on the first and
second discrete derivatives of the solution to the backward Kolmogorov equation by the discrete derivatives of the
initial data
\begin{lemma}\label{bound:derivative_of_u}
 The solution of the equation
\begin{equation}
   \partial_t u =\LOPER u\COMMA\;\;\; t \in (0,T]\COMMA\;\;\;\; u(0,\sigma)=f(\sigma)\COMMA
\end{equation}
 satisfies the bounds
\begin{equation}\label{1stlocal}
 \max_{x\in\LATT} \NORMINF{\DX u(t,\cdotp)} \leq C   \max_{x\in\LATT} \NORMINF{\DX u(0,\cdotp)}
\end{equation}
and
\begin{equation}\label{2ndlocal}
  \max_{x, y\in\LATT} \NORM{\DXY  u(t,\cdot)} \leq C   \big[
  \max_{x\in\LATT} \NORMINF{\DX u(0,\cdotp)}+\max_{x, y\in\LATT} \NORM{\DXY  u(0,\cdot)} \big]\COMMA
\end{equation}
where $C$ is a constant independent of $N$, however, it may depend exponentially on $t$.
\end{lemma}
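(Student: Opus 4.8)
\emph{Proof strategy.} The plan is to reuse the evolution equations for the first and second discrete derivatives of $u$ that were already derived in the proof of Proposition~\ref{bound:L2_on_u}, but, instead of summing them over all lattice sites (which produced the global norms $\NORMA{1}{\cdot}$ and $\NORMA{2}{\cdot}$), to take the \emph{maximum} over the free site indices. The mechanism that makes this work is Lemma~\ref{derivatives:rates}: the discrete derivatives $\DX c(y,\sigma)$ and $\DXY c(z,\sigma)$ of the rates vanish unless the sites involved lie within a fixed distance (at most $2L+1$) of one another, so the inhomogeneous terms in those evolution equations involve only a \emph{bounded} number of lower-order discrete derivatives of $u$ — a number depending on $L$ and $d$ but not on $N$ — with all site labels confined to an $O(L)$-neighborhood of the free indices. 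After passing to the maximum, these source terms are therefore controlled by the very quantities we are estimating, with an $N$-independent combinatorial prefactor, and the resulting integral inequalities close.

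For \VIZ{1stlocal}, writing $v_1=\DX u$ in Duhamel form,
\[
 \DX u(t,\sigma) = \EXP{t\LOPER}\DX u(0,\sigma) + \int_0^t \EXP{(t-s)\LOPER}\Bigl[\sum_{|y-x|\le L}\DX c(y,\sigma)\,\DY u(s,\sigma^x)\Bigr]\,ds ,
\]
one uses the sup-norm contraction property of $\EXP{t\LOPER}$, the uniform bound on $\NORM{\DX c(y,\cdot)}$, and the fact that the involutive shift $\sigma\mapsto\sigma^x$ leaves any sup-norm invariant, to get
\[
 \NORMINF{\DX u(t,\cdotp)} \le \NORMINF{\DX u(0,\cdotp)} + c_1\int_0^t \sum_{|y-x|\le L}\NORMINF{\DY u(s,\cdotp)}\,ds .
\]
Taking $\max_{x\in\LATT}$ and noting that $\kappa_L:=\#\{y:|y-x|\le L\}$ is independent of $N$, the function $\varphi(t)=\max_{x}\NORMINF{\DX u(t,\cdotp)}$ satisfies $\varphi(t)\le\varphi(0)+c_1\kappa_L\int_0^t\varphi(s)\,ds$, so Gr\"onwall's inequality (Lemma~\ref{gronwall}) yields $\varphi(t)\le\varphi(0)\EXP{c_1\kappa_L t}$, which is \VIZ{1stlocal}.

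For \VIZ{2ndlocal}, I would start from the evolution equation for $w=\DXY u$ displayed in the proof of Proposition~\ref{bound:L2_on_u}, whose three inhomogeneous terms involve $\DY c(z,\sigma)\DXZ u(s,\sigma^y)$, $\DX c(z,\sigma)\DYZ u(s,\sigma^x)$, and $\DXY c(z,\sigma)\DZ u(s,\sigma^{xy})\chi_{C_{2L}}(x,y)$; by Lemma~\ref{derivatives:rates} the summation index $z$ is confined to $L$-neighborhoods of $x$ or of $y$ in each case. Passing to Duhamel form, taking sup-norms (shifts again irrelevant, discrete derivatives of the rates uniformly bounded) and then $\max_{x,y\in\LATT}$, the quantity $\vartheta(t)=\max_{x,y}\NORM{\DXY u(t,\cdot)}$ obeys
\[
 \vartheta(t)\le\vartheta(0)+\bar c_1\int_0^t\vartheta(s)\,ds+\bar c_2\int_0^t\varphi(s)\,ds ,
\]
with constants depending on $L$ and $d$ but not on $N$ (the $\DXZ u$ and $\DYZ u$ terms feed back into $\vartheta$, the $\DZ u$ term into $\varphi$). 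Inserting the bound $\varphi(s)\le\varphi(0)\EXP{c_1\kappa_L s}$ just obtained, and applying Gr\"onwall once more (Lemma~\ref{gronwall}, or its system form in Remark~\ref{remark:gronwall}), gives $\vartheta(t)\le C(t)\bigl[\vartheta(0)+\varphi(0)\bigr]$, i.e.\ \VIZ{2ndlocal}, with $C(t)$ growing exponentially in $t$ but independent of $N$.

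The only real obstacle here is bookkeeping rather than analysis, and it is the same one as in Proposition~\ref{bound:L2_on_u}: after $\DX$ or $\DXY$ hits $\LOPER u$, one must check that every term surviving the compact-support property of Lemma~\ref{derivatives:rates} is either (i) carrying the full derivative $\DXY$ on $u$, hence absorbed by the semigroup into the leading $\LOPER\DXY u$ term and contributing nothing to the source, or (ii) carrying at most a single discrete derivative on $u$ but with site labels localized near $x$ or $y$, so that $\max_{x,y}$ converts it into $\varphi$ or $\vartheta$ times a bounded neighbor count. The one genuinely new point relative to that proposition is the replacement of ``global sum over the lattice'' by ``$\max$ over the free indices $\times$ (fixed number of neighbors)'', together with the trivial but necessary remark that the state shifts $\sigma^x$, $\sigma^y$, $\sigma^{xy}$ occurring inside the source terms are bijections of $\SIGMA$ and hence leave every sup-norm unchanged.
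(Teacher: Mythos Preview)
Your proposal is correct and follows essentially the same approach as the paper's own proof: the paper also starts from the Duhamel representation \VIZ{aux} for $\DX u$, passes to the sup-norm, sets $\gamma(t)=\max_{x\in\LATT}\NORMINF{\DX u(t,\cdotp)}$, bounds the neighbor sum by $\kappa_L\,\gamma(s)$, and closes with Gr\"onwall; for \VIZ{2ndlocal} it simply refers to the second-derivative estimate \VIZ{2ndestimate} and Lemma~\ref{gronwall}, exactly as you outline. Your write-up is more explicit about the bijection-of-shifts remark and the bookkeeping of which terms feed into $\varphi$ versus $\vartheta$, but there is no substantive difference in strategy.
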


\begin{proof}
 Using \VIZ{aux} and Lemma~\ref{derivatives:rates}, we have
\begin{eqnarray}\label{1stbound}
   \NORMINF{\DX u(t,\cdotp)}  \leq \NORMINF{\DX u(0,\cdotp)} &+& \BIGO(1) \int_0^t 
   \NORMINF{\DX u(s,\cdotp)} \, ds \nonumber\\ 
                 &+&  \BIGO(\frac{1}{L}) \int_0^t \sum_{|x-y|\leq L} \NORMINF{\DY u(s,\cdotp)} \, ds \PERIOD
\end{eqnarray}
Here the symbol $\BIGO$ is asymptotic in the size of the system $N\to\infty$. 
Setting $\gamma(t) = \max_{x\in\LATT} \NORMINF{\DX u(t,\cdotp)}$ we have
\begin{eqnarray*}
   \NORMINF{\DX u(t,\cdotp)}  \leq \gamma(0) + \BIGO(1) \int_0^t\gamma(s)ds \nonumber +  
                                                      \BIGO(\frac{1}{L}) L \int_0^t \gamma(s) \,ds \COMMA
\end{eqnarray*}
or 
$$
  \gamma(t)  \leq \gamma(0) + \BIGO(1) \int_0^t \gamma(s)\, ds \PERIOD
$$
Applying Gronwall's inequality we conclude the proof and obtain the bound
\begin{equation*}
  \gamma(t)  \leq \EXP{ct} \gamma(0) \PERIOD 
\end{equation*}
The inequality \VIZ{2ndlocal} follows similarly from \VIZ{2ndestimate} and from Lemma~\ref{gronwall}.
\end{proof}

The commutator, as shown in Lemma~\ref{prop:LieBracketLie}, is a localized quantity that
depends only on the boundary sites of the decomposed sub-lattices. Thus the  localized
estimate in Lemma~\ref{bound:derivative_of_u} gives us  a tool in order to reveal the scaling of
the commutator when acting on macroscopic observables.%

\subsection{Proof of Theorem~\ref{main}}\label{proof_main}
By  Lemma~\ref{prop:LieBracketLie}, the commutator can be written as
$[\LOPA^\partial,\LOPB^\partial]$, which due to Lemma~\ref{lem:commutator}  is expanded to
\begin{eqnarray*}
  [\LOPA^\partial,\LOPB^\partial]u(t,\sigma) =
\sum_{\substack{ x\in\Lambda_1^\partial,y\in\Lambda_2^\partial \\ |x-y|\le L}}
                       && c_1(x,\sigma)c_2(y,\sigma^x)\DY u(\sigma^x,t)  -
                            c_1(x,\sigma)c_2(y,\sigma)\DY u(\sigma,t)  \\
                      &-& c_1(x,\sigma^y)c_2(y,\sigma)\DX u(\sigma^x,t)
                            + c_1(x,\sigma)c_2(y,\sigma)\DX u(\sigma,t) \PERIOD
\end{eqnarray*}
On the other hand, by a straightforward calculation, we have
\begin{eqnarray*}
   \LOPA^\partial\LOPB^\partial u(t,\sigma) =
  && \sum_{\substack{x\in\Lambda_1^\partial,y\in\Lambda_2^\partial \\ |x-y|\le L}}
      c_1(x,\sigma)c_2(y,\sigma^x)\DXY u(t,\sigma) \\
  &&-\sum_{\substack{ x\in\Lambda_1^\partial,y\in\Lambda_2^\partial \\|x-y|\le L}} 
      c_1(x,\sigma)\DX c_2(y,\sigma)\DY u(t,\sigma)\PERIOD
\end{eqnarray*}                          
Taking norms on both sides similarly to Lemma~\ref{bound:L2_from_derivatives} and using the fact that the rates are bounded functions on
$\LATT\times\Sigma$, 
\begin{eqnarray}\label{bound:commutator_max}
  \NORMINF{[\LOPA,\LOPB]u(t,\cdotp)} 
   &\leq& C \sum_{\substack{ x\in\Lambda_1^\partial,y\in\Lambda_2^\partial \\ |x-y|\le L}} 
     \NORMINF{\DXY u(t,\cdotp)} + \NORMINF{\DY u(t,\cdotp)} \leq C \COMMA
\end{eqnarray}
where the second inequality follows from Proposition~\ref{bound:L2_on_u}, using the
fact that the initial data are \textit{macroscopic observables}, i.e., belong to $\CSS{2}$.
Similarly, we obtain the commutator estimate for the Strang scheme.

Next, we turn our attention to \VIZ{betterlie}. Many observables are in $\CSS{2}$,  
but also satisfy the local bound \VIZ{localbound} as one can see in Section~\ref{section:examples_of_observables}. 
Under this assumption, we obtain  from 
\VIZ{bound:commutator_max} the bound for the commutator

\begin{multline}\label{bound:commutator_max2}
  \NORMINF{[\LOPA,\LOPB]u(t,\cdotp)}  \leq C
         \sum_{ \substack{ x\in\Lambda_1^\partial,y\in\Lambda_2^\partial \\ |x-y|\le L}}  \NORMINF{\DXY
                 u(t,\cdotp)}+\NORMINF{\DY u(t,\cdotp)}  \\
                 \leq C \big[ \max_{x, y\in\LATT} \NORMINF{\DXY u(0,\cdotp)}+\max_{y\in\LATT}
                 \NORMINF{\DY u(0,\cdotp)}\big] 
                  \sum_{\substack{ x\in\Lambda_1^\partial,y\in\Lambda_2^\partial \\ |x-y|\le L}}1\COMMA
\end{multline}
where the second inequality follows from Lemma~\ref{bound:derivative_of_u}. Using the fact that
the initial data  belong to $\CSS{2}$ and satisfy \VIZ{localbound}, as well as 
that $|C_m^\partial|=c(d)Lq^{d-1}$, where $d$ is the dimension, we deduce that
\begin{equation}\label{betterlie2}
  \NORMINF{[\LOPA,\LOPB]u(t,\cdotp)} \leq {\frac{\tilde C}{N}} 
   \sum_{ \substack{ x\in\Lambda_1^\partial,y\in\Lambda_2^\partial \\ |x-y|\le L}} 1
     \leq \frac{\tilde C}{N} \times M \times c(d) Lq^{d-1}\times L^d= C\frac{L^{d+1}}{q}\COMMA
\end{equation}
where we used the fact that $\frac{N}{M}=Q=q^d$. We note that for more general, non-square lattices, the estimate is modified
accordingly as the structure of neighbors in the calculation of $|C_m^\partial|$ will evidently change.
Finally, the proof of \VIZ{betterstrang} follows along the same lines, noting that the the summation in \VIZ{betterlie2} 
is now replaced by summations such as 
\begin{equation*}
       \sum_{\substack{x\in\Lambda_1^\partial,y\in\Lambda_2^\partial, z\in \Lambda_1^\partial \\|x-y|\le L, |x-z|\le L}} 1\leq 
       M \times c(d) Lq^{d-1}\times L^d \times L^d \PERIOD
\end{equation*}
\section{Processor communication and error analysis}\label{PCS}
In this Section we examine the  balance between accuracy  and processor communication in the parallel Fractional 
Step KMC algorithms. Our analysis is based on the local and global error analysis tools we have developed in this article.

A key feature of the fractional step methods is what we define as  the  Processor Communication Schedule (PCS), which dictates 
the order with which the hierarchy of operators in \VIZ{opdecomp} are applied and for how long.  
For instance,  for the Lie scheme \VIZ{lie} the  processors corresponding 
to $\LOPA$ (resp. $\LOPB$) do not communicate, hence the processor communication within the
algorithm occurs {\em only} each time we have to apply   $\EXP{\Delta t\LOPA}$ or
 $\EXP{\Delta t\LOPB}$.  For this reason, we characterize the FS-KMC algorithms \VIZ{lie}, \VIZ{strang} as 
{\em partially asynchronous} since there is no processor communication during 
the period $\Delta t$. Furthermore, at every $\Delta t$
we have only local synchronization between processors, i.e., between the sets
$\CLSR\CUBE_m\cap\CLSR\CUBE_{m'}$ when $m\in\INDXE$ and $m'\in\INDXO$.  Hence, 
the bigger the allowable $\Delta t$ in \VIZ{lie} or in \VIZ{strang} the less processor communication we have, in which case 
the error in the approximation \VIZ{lie} or \VIZ{strang} worsens.

In both schemes \VIZ{lie}, and  \VIZ{strang}, the communication schedule is fully deterministic, 
relying on the Trotter Theorem. %
On the other hand, we can construct general randomized PCS based on the  {\em Random Trotter Product} Theorem, \cite{Kurtz}. 
Indeed,   the  sub-lattice parallelization algorithm for KMC,
introduced in \cite{ShimAmar05b},  is a particular example of a fractional step algorithm with stochastic PCS.
In \cite{ShimAmar05b, SPPARKS}
each sub-lattice is selected at random, independently  and  advanced by KMC over a fixed time window
$\Delta t$, subsequently  a new random selection is made and again the sub-lattice is advanced by $\Delta t$, etc. 
This algorithm is easily recast as a fractional step approximation, \cite{AKPTX}. 

Here we compare the deterministic and randomized PCS from the point of view of processor
communication and error analysis: we specify the same error tolerance TOL
for all PCS, which by means of our error analysis selects in each case a possibly different time windows $\Delta t$. 
Larger time windows $\Delta t$ give rise to algorithms that have 
less processor communication for the same error tolerance. %

\subsection{Randomized processor communication schedules}
A generalization by Kurtz, \cite{Kurtz},
of the Trotter Theorem suggests numerically  consistent schemes in which evolutions are applied not in a deterministic,
prescribed, order but as a random composition of individual propagators resulting in a random evolution. 
Given a pure jump process $X(t)$, with stationary measure $\mu(d\xi)$, and given 
the infinitesimal generators $\LOPER_k$ we define  a random evolution by 
$$
 \mathcal{T}_n(t) f = \EXP{\tau_0/n\LOPER_{\xi_0}}\EXP{\tau_1/n\LOPER_{\xi_1}}\dots\EXP{\tau_{N(nt)}/n\LOPER_{\xi_{N(nt)}}} f\COMMA
$$
where $N(t)$ is the number of jumps up to time $t$ and $\tau_k$ are the sojourn (waiting) times at the visited states 
$(\xi_0,\dots,\xi_{N(t)})$. The random Trotter product theorem yields the expectation semigroup
\begin{equation}\label{randomTrotter}
 \lim_{n\to \infty} \mathcal{T}_n(t) f = \EXP{t\bar\LOPER}f\COMMA\;\;\;\mbox{a.s.}
\end{equation}
with the generator $\bar\LOPER$ characterized explicitly 
\begin{equation}\label{averaged_generator}
  \bar\LOPER f = \int \LOPER_{\xi}f \,\mu(d\xi)\PERIOD
\end{equation}
While the random Trotter formula serves as a motivation for constructing schemes in which the evolution
of the system, i.e., the process $\PROCMIC$, is {\it approximated} by a process obtained from a random
composition of propagators $\EXP{\Delta t\LOPER_k}$,  the error analysis in the spirit of Theorem~\ref{main} 
requires more careful inspection of the
approximating process $\PROCAPPRGAM$ on the interval $[0,T]$ with $T = n\Delta t$. 

We present the construction in a simpler case of the independent identically distributed random variables that
index the individual generators $\LOPER_\xi$. We analyze the randomized Lie scheme for the operator splitting
given by $\LOPER = \LOPER_1 + \LOPER_2$.  In the context of the parallel FS-KMC
the random process $X(t)$ can be interpreted as a {\em stochastic} PCS. In \cite{AKPTX} we demonstrated that the  sub-lattice 
parallelization algorithm for KMC,
introduced in \cite{ShimAmar05b},  is a particular example of a fractional step algorithm with stochastic PCS.
In \cite{ShimAmar05b}
each sub-lattice is selected at random, independently  and  advanced by KMC over a fixed time window
$\Delta t=\DT$,
subsequently  a new random selection is made and again the sub-lattice is advanced by $\DT$, etc. 
This algorithm is easily recast as a fractional step approximation, where we can show that %
$\bar\LOPER =\frac{1}{2} \left(\LOPA+\LOPB\right)$
which is a time-rescaling of the original operator $\LOPER$. 
From the numerical analysis viewpoint, our   re-interpretation  of the  algorithm in  \cite{ShimAmar05b} as \VIZ{randomTrotter}  allows us 
to provide a   rigorous justification  that it is a {\em consistent} estimator of the serial KMC algorithm. 
Next we present the local error analysis of randomized PCS
and in analogy to Lemma~\ref{lem:local_error}, we estimate the mean (weak) local  error of the approximating $\gamma$-process. 

\begin{definition}[Random Lie splitting]\label{Lieapproxprocess}
 Let $\SEMIGRP_i(t)$, $i=1,2$, be two Markov semigroups with the infinitesimal generators $\LOPER_i$
 and the transition probability kernels $p_i(t;\gamma,\gamma')$.
 Assume $\{\xi_1,\xi_2,\dots\}$ be a sequence of i.i.d. Bernoulli random variables with values $\xi\in\{1,2\}$.
 We define the random evolution as the process $\PROCAPPRGAM$ by setting for $h>0$, $k=0,1,2,\dots, n$, and $\xi_{2k}$,
 $\xi_{2k-1}$ independent of $\gamma_0,\gamma_h,\dots,\gamma_{(k-1)h}$
 \begin{equation}\label{gammaproc}
    \EXPECT[f(\gamma_{kh})\SEP \gamma_{(k-1)h}] := \SEMIGRP_{\xi_{2k-1}}(h)\SEMIGRP_{\xi_{2k}}(h) f(\gamma_{(2k-1)h})\COMMA
 \end{equation}
 where the transition probability kernel is 
 $$
   [\SEMIGRP_{\xi_1}(h)\SEMIGRP_{\xi_2}(h)f](\eta) = 
     \sum_{\gamma'} \sum_{\gamma''} p_{\xi_1}(h;\eta,\gamma') p_{\xi_2}(h;\gamma',\gamma'') f(\gamma'')\PERIOD
 $$
\end{definition}

For a given $f\in\CBS$
we estimate the quantity $\EXPECT^\sigma[f(\sigma_{kh})]$ and $\EXPECT^\gamma[f(\gamma_{kh})]$ where the 
expected values are computed on the corresponding probability spaces associated with each process and
conditioned on the initial states $\sigma_0=\sigma$ and $\gamma_0 = \gamma$ respectively. We 
denote the initial states by different letters in order to distinguish between these two different probability path
measures, however, the initial state is assumed to be same for both $\PROCMIC$ and $\PROCAPPRGAM$.
\begin{theorem}[Local Error]\label{random_local_error}
 Assume 
 $\PROB{\xi_k=1}=\PROB{\xi_k=2}=\frac{1}{2}$, 
 for  the approximating process $\PROCAPPRGAM$ of Definition~\ref{Lieapproxprocess}. Then for  any $f\in \CBS$ and given  
 $\Delta t=h>0$,  the exact
 process $\PROCMIC$ with $\sigma_0 = \gamma_0=\gamma$ corresponding to the generator $\frac{1}{2}\LOPER$ satisfies
\begin{eqnarray*}\label{localmeanerror2}
  \EXPECT^\gamma[f(\gamma_h)] - \EXPECT^\sigma[f(\sigma_h)] &=& \EXPECT^\xi\left[\left(\SEMIGRP_{\xi_1}(h)\SEMIGRP_{\xi_{2}}(h)f(\gamma) -
    u(\gamma, \DT)\right)\right] \\
    &=& 
    \frac{h^2}{2} \EXPECT^\xi\left[ \LOPER^2_{\xi_1} + \LOPER^2_{\xi_2} + 2\LOPER_{\xi_1}\LOPER_{\xi_2}
   -\frac{1}{4}\LOPER^2\right]f(\gamma) +  \BIGO(h^3)\PERIOD   
\end{eqnarray*}
where $u(\gamma, \DT) = \SEMIGRP(\DT) f(\gamma)$ is the solution
of the rescaled, by $1/2$, equation \VIZ{ODE}
\begin{equation}\label{ODE2}
  \partial_t u(\zeta, t)=\frac{1}{2}\LOPER u(\zeta, t)\, , \quad\quad u(\zeta, 0)=f(\zeta)\PERIOD
\end{equation} 
\end{theorem}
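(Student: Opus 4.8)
This is a one-step estimate ($k=1$ in Definition~\ref{Lieapproxprocess}), so the argument parallels the proof of Lemma~\ref{lem:local_error} with an extra layer of averaging over the Bernoulli variables. The first equality is essentially a matter of unwinding the definitions: conditioning on the pair $(\xi_1,\xi_2)$ and using the tower property together with Definition~\ref{Lieapproxprocess} gives $\EXPECT^\gamma[f(\gamma_h)] = \EXPECT^\xi\big[(\SEMIGRP_{\xi_1}(h)\SEMIGRP_{\xi_2}(h)f)(\gamma)\big]$, while the fact that $\{\sigma_t\}$ is the Markov process with generator $\tfrac12\LOPER$ started at $\gamma_0=\gamma$ gives $\EXPECT^\sigma[f(\sigma_h)] = (\SEMIGRP(h)f)(\gamma) = u(\gamma,\DT)$, the solution of \VIZ{ODE2}. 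Subtracting, and pulling the ($\xi$-independent) term $u(\gamma,\DT)$ inside $\EXPECT^\xi$, produces the first identity.

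For the second equality I would Taylor-expand each exponential factor in the step kernel. By Lemma~\ref{RemainderBound}, applied to the contraction semigroups generated by $\LOPER_1$, $\LOPER_2$ and by $\LOPER$,
\begin{equation*}
 \SEMIGRP_{\xi_i}(h)g = g + h\,\LOPER_{\xi_i} g + \tfrac{h^2}{2}\,\LOPER_{\xi_i}^2 g + \DOPER_3(h\LOPER_{\xi_i})g, \qquad \NORM{\DOPER_3(h\LOPER_{\xi_i})g}\le \tfrac{h^3}{6}\NORM{\LOPER_{\xi_i}^3 g},
\end{equation*}
and $u(\gamma,\DT)=\SEMIGRP(h)f$ is expanded to second order in the same way. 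Multiplying the two expansions of $\SEMIGRP_{\xi_1}(h)\SEMIGRP_{\xi_2}(h)f$ --- and, crucially, keeping the operators in order, since $\LOPER_{\xi_1}$ and $\LOPER_{\xi_2}$ need not commute --- yields
\begin{equation*}
 \SEMIGRP_{\xi_1}(h)\SEMIGRP_{\xi_2}(h)f = f + h\big(\LOPER_{\xi_1}+\LOPER_{\xi_2}\big)f + \tfrac{h^2}{2}\big(\LOPER_{\xi_1}^2+\LOPER_{\xi_2}^2+2\LOPER_{\xi_1}\LOPER_{\xi_2}\big)f + r_\xi(h),
\end{equation*}
where on the finite lattice $\LATT$ (bounded generators) $\NORM{r_\xi(h)}\le C h^3$ with $C$ depending only on $\max_i\NORM{\LOPER_i^3 f}$, uniformly in $(\xi_1,\xi_2)$; the third- and higher-order mixed terms (e.g.\ $h\,\LOPER_{\xi_1}\tfrac{h^2}{2}\LOPER_{\xi_2}^2 f$ together with the $\DOPER_3$ contributions) are bounded exactly as the operator-exponential bookkeeping in the proof of Lemma~\ref{lem:local_error}.

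Finally I would subtract $u(\gamma,\DT)$, take $\EXPECT^\xi$, and invoke that $\xi_1,\xi_2$ are i.i.d.\ with $\PROB{\xi_k=1}=\PROB{\xi_k=2}=\tfrac12$: then $\EXPECT^\xi[\LOPER_{\xi_i}] = \tfrac12(\LOPER_1+\LOPER_2) = \tfrac12\LOPER$ and, by independence, $\EXPECT^\xi[\LOPER_{\xi_1}\LOPER_{\xi_2}] = \tfrac14\LOPER^2$. Collecting by order in $h$, the zeroth-order terms cancel, the first-order terms cancel against the expansion of $u(\gamma,\DT)$ (this is where the $\tfrac12$-rescaling of \VIZ{ODE2} enters, consistently with $\bar\LOPER = \int \LOPER_\xi\,\mu(d\xi) = \tfrac12\LOPER$ from the random Trotter formula \VIZ{averaged_generator}), and the second-order terms assemble into $\tfrac{h^2}{2}\EXPECT^\xi\big[\LOPER_{\xi_1}^2+\LOPER_{\xi_2}^2+2\LOPER_{\xi_1}\LOPER_{\xi_2}-\tfrac14\LOPER^2\big]f$, the $-\tfrac14\LOPER^2$ being precisely the $O(h^2)$ coefficient of $\SEMIGRP(h)f=\EXP{\frac h2\LOPER}f$. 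Averaging $r_\xi(h)$ against the finitely supported law of $(\xi_1,\xi_2)$ preserves the $\BIGO(h^3)$ bound, which completes the estimate.

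\textbf{Main obstacle.} The one delicate point is the remainder: here we expand a \emph{composition} of two exponentials rather than a single one, so the cubic error is not simply $\DOPER_3$ of one generator but a sum of mixed terms, and the bound on it must be made uniform over the values of each $\xi_i$ so that it survives the expectation $\EXPECT^\xi$. On $\LATT$ this uniformity is automatic from boundedness of $\LOPER_1,\LOPER_2$; on the infinite lattice one would instead have to re-run the argument using the macroscopic-observable estimates of Section~\ref{macroscopic} to control $\NORM{\LOPER_{\xi_i}^3 f}$. Apart from scrupulously preserving the order of the non-commuting operators, the computation is the same expansion already carried out for the deterministic Lie scheme.
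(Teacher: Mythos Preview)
Your proposal is correct and follows essentially the same approach as the paper: unwind the definitions to get the first identity, then Taylor-expand $\SEMIGRP_{\xi_1}(h)\SEMIGRP_{\xi_2}(h)f$ and $\SEMIGRP(h)f$ to second order exactly as in the deterministic Lie case, and observe that the $\BIGO(h)$ term $\EXPECT^\xi[\LOPER_{\xi_1}+\LOPER_{\xi_2}-\tfrac12\LOPER]f$ vanishes under the symmetric Bernoulli assumption. The paper's proof is in fact terser than yours; your explicit bookkeeping of the mixed cubic remainder and the uniformity-in-$\xi$ remark are accurate elaborations rather than departures.
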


\begin{proof}
We  estimate the local truncation error following similar steps as in the deterministic case.
From the definition of the $\gamma$-process we have
$$
 \EXPECT^\gamma[f(\gamma_{h})] = \EXPECT^\xi[\SEMIGRP_{\xi_{1}}(h)\SEMIGRP_{\xi_{2}}(h) f(\gamma)]\COMMA
$$
and similarly, using the fact that the initial states are same, $\sigma_0 = \gamma_0 = \gamma$, 
$$
 \EXPECT^\sigma[f(\sigma_{h})] = \SEMIGRP(h) f(\gamma)=u(\gamma, \DT)\PERIOD
$$
Hence we obtain a representation of the mean local error
\begin{equation}\label{meanerror}
  \EXPECT^\gamma[f(\gamma_h)] - \EXPECT^\sigma[f(\sigma_h)] = \EXPECT^\xi\left[\left(\SEMIGRP_{\xi_1}(h)\SEMIGRP_{\xi_{2}}(h) -
    \SEMIGRP(h)\right)f(\gamma)\right]\PERIOD
\end{equation}
Now for given realizations of $\xi_1$, $\xi_2$ we have the expansion of 
$\SEMIGRP_{\xi_1}(h)\SEMIGRP_{\xi_{2}}(h) - \SEMIGRP(h)$ as in the deterministic case, thus obtaining
\begin{equation}\label{errorrep}
\begin{aligned} 
 & [\SEMIGRP_{\xi_1}(h)\SEMIGRP_{\xi_{2}}(h) - \SEMIGRP(h)]f = \\ 
 & h[\LOPER_{\xi_1} + \LOPER_{\xi_2} - \frac{1}{2}\LOPER] f + \frac{h^2}{2}
      [\LOPER^2_{\xi_1} + \LOPER^2_{\xi_2} + 2\LOPER_{\xi_1}\LOPER_{\xi_2} - \frac{1}{4}\LOPER^2] f + \BIGO(h^3)\PERIOD
\end{aligned}
\end{equation}
Note that 
$\frac{1}{2}\LOPER=\frac{1}{2}\LOPER_1+\frac{1}{2}\LOPER_2$ is associated with  the process
$\PROCMIC$.
We have that the leading term of the local truncation error is $\EXPECT^\xi[\LOPER_{\xi_1} + \LOPER_{\xi_2} - \frac{1}{2}\LOPER]$ 
and thus this term vanishes whenever
$\frac{1}{2}\LOPER = \EXPECT^\xi[\LOPER_{\xi_1} + \LOPER_{\xi_2}]$, which holds true when
$P(\xi_k=1)=P(\xi_k=2)=\frac{1}{2}$. 
\end{proof}

\begin{remark}
{\rm
For  Bernoulli variables $\xi_i$ with probabilities $p$ this means that the $\gamma$-process
approximates a process with the generator $p\LOPER_1 + (1-p)\LOPER_2$ instead of $\frac{1}{2}
\LOPER=\frac{1}{2} \LOPER_1 + \frac{1}{2} \LOPER_2$. 
This indicates that the usual order
of the Lie splitting is achieved by properly weighing the time steps, i.e., applying $\SEMIGRP_1(h_1)$ and $\SEMIGRP_2(h_2)$ 
with different time steps $h_1$ and $h_2$ respectively. This calculation also shows that if we want to obtain the
generator $\LOPER$ instead of $\frac{1}{2}\LOPER$ in Lemma~\ref{random_local_error}, then in order  to evolve the process 
$\sigma$ by the time step $h$,  each semigroup $\SEMIGRP_{\xi_1}, \SEMIGRP_{\xi_{2}}$ needs to be 
applied with the time step $2h$, giving rise to the approximating process $\gamma_h$. In this case we have the local error representation
\begin{eqnarray}\label{localmeanerror3}
  \EXPECT^\gamma[f(\gamma_h)] - \EXPECT^\sigma[f(\sigma_h)] :&=& \EXPECT^\xi\left[\left(\SEMIGRP_{\xi_1}(2h)\SEMIGRP_{\xi_{2}}(2h)f(\gamma) -
     u(\gamma, \DT)\right)\right] \nonumber \\
    &=& 
 \frac{h^2}{2} \EXPECT^\xi\left[ 4\LOPER^2_{\xi_1} + 4\LOPER^2_{\xi_2} + 8\LOPER_{\xi_1}\LOPER_{\xi_2} -\LOPER^2\right]f(\gamma)\\
 &&+  \BIGO(h^3)   \nonumber \COMMA 
\end{eqnarray}
where $u(\gamma, \DT) = \SEMIGRP(\DT) f(\gamma)$ is the solution of \VIZ{ODE}.
}
\end{remark}

\subsection{Comparison of deterministic and random schedules}
The presented error analysis allows us to evaluate and compare   deterministic (Lie and Strang) PCS introduced in \cite{AKPTX}, 
as well as randomized PCS such as the one in Lemma~\ref{localmeanerror2},
introduced earlier in  \cite{ShimAmar05b}. We compare the deterministic and randomized PCS from the
point of view of processor communication and error analysis by specifying the same error tolerance TOL
for all PCS which, by means of our error analysis, selects in each case a possibly different time window $\Delta t$. 
Larger time windows  give rise to algorithms that have 
less processor communication for the same error tolerance. We start with the Lie and Strang schemes. 

We fix the same error  tolerance level TOL in the Lie and Strang global errors
\VIZ{global_lie} and \VIZ{global_strang} respectively.
We also fix  the same time window $T=n_L \Delta t_L$ and $T=n_S\Delta_S$ where $\Delta t_L$ and $\Delta t_S$ are 
the respective time steps of the Lie and the Strang schemes that will ensure  
the same tolerance level TOL up to time $T$.  Based on Theorems~\ref{error:global} and~\ref{main}
we have that the leading errors are governed by the commutators
\begin{equation}\label{error_communication_lie}
    \mathrm{TOL}  \sim C_{\mathrm{Lie}}(T) \Delta t_{\mathrm{Lie}}\COMMA \;\quad
    C_{\mathrm{Lie}}(T)=\max_{k=0,\dots,n} \NORM{[\LOPA,\LOPB]u(t_k)}\COMMA
\end{equation}
and
\begin{eqnarray}\label{error_communication_strang}
  \mathrm{TOL}  &\sim& C_{\mathrm{Strang}}(T) \Delta t_{\mathrm{Strang}}^2\, , \\\quad C_{\mathrm{Strang}}(T)&=&
  \max_{k=0,\dots,n}\NORM{ \Bigl( [\LOPA,[\LOPA,\LOPB]] - 2[\LOPB,[\LOPB,\LOPA]]\Bigr)u(t_k)}\COMMA\nonumber
\end{eqnarray}
where  $u=u(t)$ solves \VIZ{ODE}. Furthermore,  due to \VIZ{betterlie} and \VIZ{betterstrang} we have that
\begin{equation}\label{error_communication_lie_strang}
   \mathrm{TOL}  \sim \BIGO\Big(\frac{L^{d+1}}{q}\Big) \Delta t_{\mathrm{Lie}}\COMMA \quad  \mathrm{TOL} 
   \sim \BIGO(\frac{L^{2d+1}}{q}\Big) \Delta t_{\mathrm{Strang}}^2\PERIOD
\end{equation}
In the case of the randomized PCS the same reasoning as in Theorem~\ref{error:global} allows us
to iterate   the mean local error \VIZ{localmeanerror3} to obtain
\begin{align}\label{error_communication_random}
   \mathrm{TOL}  & \sim C_{\mathrm{Random}}(T) \Delta t_{\mathrm{Random}}\, \COMMA \nonumber \\
   \quad C_{\mathrm{Random}}(T) & =\max_{k=0,\dots,n}\EXPECT^\xi\left[ 4\LOPER^2_{\xi_1} +
    4\LOPER^2_{\xi_2} + 8\LOPER_{\xi_1}\LOPER_{\xi_2} -\LOPER^2\right]u(t_k)\COMMA 
\end{align}
where $u=u(t)$ solves \VIZ{ODE}. We now easily obtain that 
$$
  \EXPECT^\xi\left[ 4\LOPER^2_{\xi_1} + 4\LOPER^2_{\xi_2} + 8\LOPER_{\xi_1}\LOPER_{\xi_2} -\LOPER^2\right]u(t)
    =\left[4\LOPER^2_1+4\LOPER^2_2+\LOPER^2\right]u(t) \PERIOD
$$
Thus, due to the rigorous remainder bounds in Section~\ref{macroscopic} on the solution of \VIZ{ODE} such as Lemma~\ref{bound:L2_on_u},
we have that the term $\NORM{\left[4\LOPER^2_1+4\LOPER^2_2+\LOPER^2\right]u}$ is  of order $\BIGO(1)$ in the system size $N$, and we have 
\begin{equation}\label{error_communication_random_2}
   \mathrm{TOL}  \sim \BIGO(1) \Delta t_{\mathrm{Random}}\PERIOD
\end{equation}

In order to achieve the same  error tolerance TOL, \VIZ{error_communication_lie_strang} and
\VIZ{error_communication_random_2} imply the following relation between the respective time steps
\begin{equation} \label{order2}
  \delta t_{\mathrm{SSA}} \ll \Delta t_{\mathrm{Random}} \sim   \frac{L^{d+1}}{q}\Delta t_{\mathrm{Lie}}
             < \Delta t_{\mathrm{Lie}}\sim L^{d}\Delta t_{\mathrm{Strang}}^2 < \Delta t_{\mathrm{Strang}}\PERIOD
\end{equation}
Here $q$ is the diameter of each of the cells $C_k$ in Figure~\ref{fig:lattice_partition}, and $\delta t_{\mathrm{SSA}}=
\BIGO(1/N)$  is the stochastic time step (the waiting time) of the SSA algorithm \cite{Gillespie76}, which  is exponentially
distributed according to  \VIZ{totalrate}. 

\medskip
The relation \VIZ{order2} has several practical implications.
\begin{enumerate}[(i)]
\item The selection of the time window $\Delta t$ in each PCS is  intrinsically goal-oriented in the sense that it depends 
      directly on the macroscopic observable $f(\sigma)$ 
      through the commutator estimates of the solution to \VIZ{ODE}.
\item The random and deterministic PCS studied here are rigorously partially asynchronous as their
      respective time windows are much larger than  the SSA time step $\delta t_{\mathrm{SSA}}$
      for a given error  tolerance.
\item  The Lie scheme \VIZ{lie} is expected to parallelize better than the randomized  
       PCS  in  \cite{ShimAmar05b} when $L^{d+1}\ll q$, since it allows a $q$-times larger time step
       $\Delta t$ for the same accuracy. This outcome is also demonstrated in Figure~\ref{order}.
\item Finally, among the PCS we studied,  the Strang PCS yields  parallel schemes with 
      the least processor communication, at least when $L \sim \BIGO(1)$,  due to its higher order accuracy 
      and the commutator estimate \VIZ{betterstrang}.
\end{enumerate}
\medskip

\begin{example}\label{example:order}
{\rm
We demonstrate this comparison in a computational example in which a jump process defined by Arrhenius spin-flip dynamics
on a one-dimensional lattice was simulated. The simulated system corresponds to the Ising model with nearest-neighbor interactions
and spins taking values in $\{0,1\}$. The rate of the process is give by
$$
  c(x,\sigma) = c_d(1-\sigma(x)) + c_a\sigma(x)\EXP{-\beta U(x)}\COMMA
$$
where $U(x) = J (\sigma(x-1) + \sigma(x+1))+ \bar h$, and $c_d$, $c_a$, $\beta$, $J$, $h$ are the parameters of the model.

We verified the theoretical order of convergence by computing the error
$$
   \int_0^T |\EXPECT[C(t)] - \EXPECT[\tilde C(t)]|\, dt
$$
where $C(t)$ and $\tilde C(t)$ are the reference KMC and the FS-KMC solution, respectively, obtained by
averaging the spatial mean coverage process $C(t) = \sum_{x\in\LATT}\sigma_t(x)$ of the system 
over $K$ independent realizations. 
For the reference solution, the classical stochastic simulation algorithm (SSA) was used. 
In order to eliminate the impact of the statistical averaging error $K=10^5$ independent samples were used.
The error bars are below resolution of the graph depicted in Figure~\ref{order}.
In Figure~\ref{order} the error behavior is compared for different values of the splitting
time step $h \equiv \Delta t$ for the randomized PCS and the Lie splitting.
The lattice size is $N=800$ and the parameters of the system are $\beta=15$, $J=0.37$, $h=0.5$ and
$c_a=c_d=1$. For the fractional step algorithm four processors were used, thus the size of the sub-lattice is
$q=100$. The final time is chosen to be $T=4$. 
\begin{figure}[ht]
  \centerline{%
        \includegraphics[scale=0.4]{./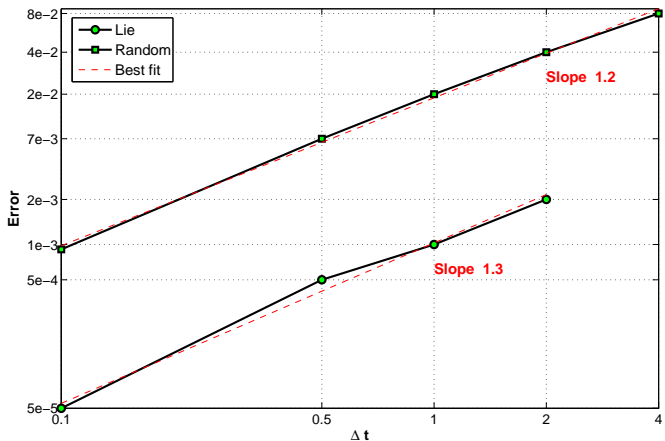}
  }%
  \caption{\label{order} Convergence of the weak error for deterministic and randomized Lie splitting.}
\end{figure}

}
\end{example}

\begin{example}
{\rm 
In this example we investigate the dependence of the weak error, as defined in the previous
example, on the sub-lattice parameter $q$. The model we used to run the simulation is Ising model,
as described in Example~\ref{example:order}. The parameters for the model are $\beta=5$, $J=1$,
$h=0.5$, and $c_a=c_d=1$. The final time is chosen to be $T=5$ and the dimension of the lattice
$N=480$. For the FS-KMC algorithm a constant, and rather large, time step parameter $\Delta t=5$
was used. For the FS-KMC algorithm we used $K=10^4$ samples to compute the mean value of the
solution on the interval $[0,T]$ and for the reference solution, which was obtained with the SSA
algorithm, $K=10^5$ samples were used. 
\begin{figure}[ht]
  \centerline{%
        \includegraphics[scale=0.5]{./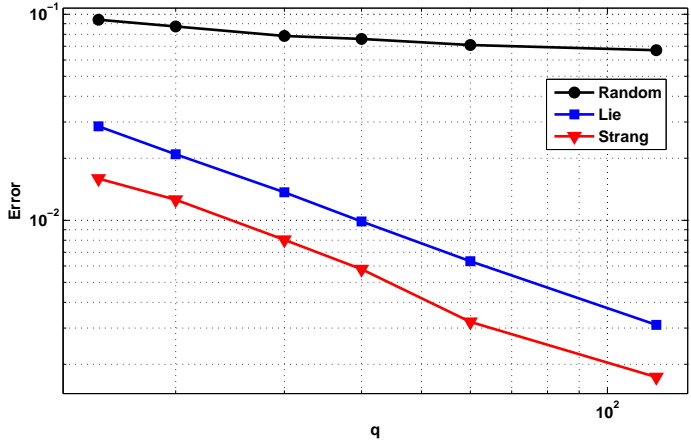}
  }%
  \caption{\label{fig:errorVSq} Dependence of the weak error on the sub-lattice size parameter $q$, see also \VIZ{order2}.}
\end{figure}
In Figure~\ref{fig:errorVSq} we can observe that the deterministic schedules of Lie and Strang give
better results than those of the random PCS. Also the Strang scheme has lower error than the Lie
scheme as expected from the theoretical analysis. Finally, the dependence of the error on
$\frac{1}{q}$ is also revealed, which in logarithmic scale is shown as a straight line. 
}
\end{example}

 \section{The infinite volume limit}\label{infinite_volume}

In this paper we considered interacting particle systems defined on a $d$-dimensional lattice
$\LATT$, as the numerical analysis and simulations for the parallel fractional step Kinetic Monte Carlo
are performed on a finite lattice of size $N$.
However, given the size of real
molecular systems it is  necessary that numerical estimates are independent of the system size $N$
as we showed in Section~\ref{macroscopic}.
Alternatively we can consider the case $N\to\infty$, e.g., by setting up our analysis on the infinite
lattice $\Lambda = \Z^d$. We outline the latter approach here for completeness of our analysis.
We refer to \cite{Liggett} for a comprehensive study of interacting particle systems set on infinite
lattices.

First, we consider the configuration space $\SIGMA=\SPINSP^{\Lambda}$, where $\Lambda = \Z^d$ and
the space of bounded continuous functions $\CBS$. Then the generator \VIZ{generator}
is defined on a suitable domain $\cal D(\LOPER)$,
$$
  \LOPER: D(\LOPER) \subset \CBS \mapsto \CBS \, .
$$
In this case, Theorem~\ref{error:global} is restated similarly to Theorem~3 in \cite{jahnke},
provided the solution $u=u(t)$ of \VIZ{ODE} satisfies 
$u(t_k) \in {\cal D}(\LOPA^{m_1}\LOPB^{m_2})$
for $|m|\le 3$ and $k=0,\dots,n$. As it was also pointed out in \cite{jahnke} this is in principle an
uncheckable hypothesis. However, this is not the case here:
due to  the results of Section~\ref{macroscopic} we have that if $f \in  \CSS{2}$ is a macroscopic
observable, where
\begin{equation*}
 \CSS{m} := \{ f\in\CBS \SEP  \sum_{k=1}^m\NORMA{k}{f} <\infty   \} \COMMA \forall m \in\mathbb{N}\COMMA
\end{equation*}
then $u(t) \in {\cal D}(\LOPA^{m_1}\LOPB^{m_2})$ due to  Theorem~\ref{main} and
Remark~\ref{bound:L2_on_u_easy}. Therefore, all estimates  of Section~\ref{macroscopic} hold also
true in the infinite lattice $\Lambda$, which is certainly not unexpected since all previous results
in $\LATT$ were independent of the system size $N$.

\section{Conclusions}

In this paper, we derived numerical error estimates for the Fractional Step Kinetic Monte Carlo (FS-KMC) algorithms 
proposed in \cite{AKPTX} for the parallel simulation
of interacting  particle systems on a lattice. These  algorithms have the capacity to simulate a wide range of spatio-temporal 
scales  of  spatially distributed, non-equilibrium physiochemical 
processes with complex chemistry and transport micro-mechanisms, while they can be tailored to specific hierarchical parallel 
architectures such as clusters of Graphical Processing Units.
A key aspect of 
our approach  relies on emphasizing a {\em goal-oriented} error analysis for macroscopic observables (e.g.,
density, energy, correlations, surface roughness), rather than 
focusing on strong topology estimates for individual trajectories or estimating probability distributions solving 
the Master Equation (Forward Kolmogorov Equation). 
Our analysis also addresses earlier work on parallel KMC algorithms \cite{ShimAmar05b, SPPARKS} 
that fit into the FS-KMC framework. Furthermore, moving  beyond  the parallelization problems discussed here, 
it appears that these  methodologies, introduced  in Section~\ref{macroscopic},   can be generally useful  in the development and study of
numerical approximations of molecular and other extended systems. 
Our error analysis allows us to address systematically the processor communication of
different parallelization strategies for KMC by  comparing their (partial) asynchrony, which in turn is measured by 
their respective  fractional step time-step for a prescribed  error tolerance.

\appendix

\begin{center}
\vspace*{0.2cm}
  {\bf APPENDIX}
\end{center}

\section{A general form of Gronwall's inequality}

For the sake of completeness we prove a variant of Gronwall's lemma for a particular case that appears in
the proof of Proposition~\ref{bound:derivative_of_u}. We prove it in the presence of two
equations, but the result can be easily generalized for a system of equations.

\begin{lemma} [Gronwall's inequality]\label{gronwall}
 Let $\vartheta$ and $\varphi$ satisfy the following inequalities
\begin{eqnarray*}
  \varphi(t) &\leq& \varphi(0) + \int_0^t \varphi(s)\, ds  \\ 
  \vartheta(t) &\leq& \vartheta(0) + \int_0^t\vartheta(s)\, ds + \int_0^t\varphi(s)\, ds
\end{eqnarray*}
then
\begin{eqnarray}
  \varphi(t)   &\leq& e^t\varphi(0) \\ 
  \vartheta(t) &\leq& e^t\vartheta(0) + (e^t+te^t-1)\varphi(0)
\end{eqnarray}
\end{lemma}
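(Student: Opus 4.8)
The plan is to decouple the two inequalities: first close the estimate for $\varphi$ on its own by the classical Gronwall argument, then substitute the resulting bound into the inequality for $\vartheta$ and close that one by a second application of the same argument. This ``triangular'' strategy is exactly what makes the lemma go through, and it generalizes verbatim to a system of several such coupled inequalities (solve from the top component down).

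First I would handle $\varphi$. Setting $\Phi(t) = \varphi(0) + \int_0^t \varphi(s)\,ds$, the hypothesis gives $\varphi \le \Phi$, while $\Phi(0) = \varphi(0)$ and $\Phi'(t) = \varphi(t) \le \Phi(t)$; hence $(e^{-t}\Phi)'(t) = e^{-t}(\Phi'(t) - \Phi(t)) \le 0$, so $e^{-t}\Phi(t) \le \varphi(0)$ and therefore $\varphi(t) \le \Phi(t) \le e^t\varphi(0)$, which is the first claimed bound. Next I would feed this into the second inequality: since $\int_0^t \varphi(s)\,ds \le \varphi(0)\int_0^t e^s\,ds = (e^t-1)\varphi(0)$, we obtain
$$\vartheta(t) \le \vartheta(0) + (e^t-1)\varphi(0) + \int_0^t \vartheta(s)\,ds.$$
Introduce $G(t) := \vartheta(0) + (e^t-1)\varphi(0) + \int_0^t \vartheta(s)\,ds$, so that $\vartheta \le G$, $G(0) = \vartheta(0)$, and $G'(t) = e^t\varphi(0) + \vartheta(t) \le e^t\varphi(0) + G(t)$. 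Multiplying by the integrating factor $e^{-t}$ gives $(e^{-t}G)'(t) \le \varphi(0)$; integrating from $0$ to $t$ yields $e^{-t}G(t) - \vartheta(0) \le t\varphi(0)$, hence
$$\vartheta(t) \le G(t) \le e^t\vartheta(0) + t e^t\varphi(0).$$
Since the quantities $\varphi$, $\vartheta$ arising in the applications are sums of norms and hence nonnegative (so $\varphi(0)\ge 0$), and since $t e^t \le e^t + t e^t - 1$ for all $t \ge 0$ (equivalently $e^t \ge 1$), this implies the stated bound $\vartheta(t) \le e^t\vartheta(0) + (e^t + t e^t - 1)\varphi(0)$.

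There is no genuine obstacle here: this is the standard two-step Gronwall estimate and every step is a one-line verification. The only point that needs a little attention is bookkeeping of the constant multiplying $\varphi(0)$ -- the integrating-factor computation naturally produces the slightly sharper factor $t e^t$, and one simply records that it is dominated by the factor $e^t + t e^t - 1$ appearing in the statement (alternatively, one can keep the sharper $t e^t$, which is all that is used in the proof of Proposition~\ref{bound:derivative_of_u}). Should one wish to remain fully general without assuming $\varphi,\vartheta \ge 0$, it suffices to impose nonnegativity in the hypotheses, as is the case in every application in this paper.
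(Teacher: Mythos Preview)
Your proof is correct and follows essentially the same approach as the paper: apply the classical Gronwall argument to $\varphi$, substitute the resulting bound $\int_0^t\varphi\le(e^t-1)\varphi(0)$ into the second inequality, and then close the estimate for $\vartheta$ by the integrating-factor method. Your computation in fact yields the slightly sharper coefficient $te^t$ in front of $\varphi(0)$ (which, as you correctly note, is dominated by the paper's $e^t+te^t-1$ when $\varphi(0)\ge0$); the paper's own calculation, carried out carefully, also produces $te^t$, so the stated constant is simply a non-sharp upper bound.
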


\begin{proof}
The first estimate follows directly from Gronwall's inequality. 
By integrating this inequality on $[0,t]$
$$
   \int_0^t \varphi(s) \leq (e^t -1)\varphi(0)\COMMA
$$
and by substituting this to the second inequality we obtain
$$
  \vartheta(t) \leq \vartheta(0) + (e^t-1)\varphi(0) + \int_0^t\vartheta(s)\, ds\PERIOD
$$
If we multiply by $e^{-t}$ and integrate on $[0,t]$ we have
$$
   \int_0^t \Bigl [e^{-r} \int_0^r \vartheta(r) \Bigr ]'\, dr \leq (1-e^{-t})\vartheta(0) +
    (e^t+te^t-1)\varphi(0)\COMMA
$$
and after straightforward calculations
$$
   \vartheta(t) \leq e^t \vartheta(0) + (e^t+te^t-1)\varphi(0)\PERIOD
$$
\end{proof}

\begin{remark}\label{remark:gronwall}
{\rm 
Let $\Phi(t)=(\varphi_1(t),\dots,\varphi_n(t))$ satisfying 
$$
\Phi(t) \leq \Phi(0) + \int_0^t A\Phi(s)\,ds
$$
where $A$ is a constant lower triangular matrix and the inequality has the meaning that it is true
component-wise, then
$$
  \Phi(t) \leq B(t)\Phi(0)\COMMA
$$
where $B$ is a lower triangular matrix with elements exponentially depending on $t$.
}
\end{remark}


\begin{thebibliography}{10}

\bibitem{AKPTX}
{\sc G.~{Arampatzis}, M.~A. {Katsoulakis}, P.~{Plech\'a\v{c}}, M.~{Taufer}, and
  L.~{Xu}}, {\em {Hierarchical fractional-step approximations and parallel
  kinetic {M}onte {C}arlo algorithms}}, J. Comp. Phys.,  (2012), {\tt doi: 10.1016/j.jcp.2012.07.017}.

\bibitem{Auerbach}
{\sc S.~M. Auerbach}, {\em Theory and simulation of jump dynamics, diffusion
  and phase equilibrium in nanopores.}, Int. Rev. Phys. Chem., 19 (2000).

\bibitem{BKL75}
{\sc A.~B. Bortz, M.~H. Kalos, and J.~L. Lebowitz}, {\em A new algorithm for
  {M}onte {C}arlo simulation of {I}sing spin systems}, J. Comp. Phys., 17
  (1975), pp.~10--18.

\bibitem{ACDV}
{\sc A.~Chatterjee and D.~G. Vlachos}, {\em {An overview of spatial microscopic
  and accelerated kinetic Monte Carlo methods}}, {J. Comput.-Aided Mater.
  Design}, {14} ({2007}), pp.~{253--308}.

\bibitem{Lubachevsky93}
{\sc S.~G. Eick, A.~G. Greenberg, B.~D. Lubachevsky, and A.~Weiss}, {\em
  Synchronous relaxation for parallel simulations with applications to
  circuit-switched networks}, ACM Trans. Model. Comput. Simul., 3 (1993),
  pp.~287--314.

\bibitem{Gardiner04}
{\sc Crispin Gardiner}, {\em Handbook of Stochastic Methods: for Physics,
  Chemistry and the Natural Sciences}, Springer, 4th~ed., 2009.

\bibitem{Gillespie76}
{\sc D.~T. Gillespie}, {\em A general method for numerically simulating the
  stochastic time evolution of coupled chemical reactions}, J. of Comp. Phys.,
  22 (1976), pp.~403--434.

\bibitem{Hairer}
{\sc Ernst Hairer, Christian Lubich, and Gerhard Wanner}, {\em Geometric
  numerical integration}, vol.~31 of Springer Series in Computational
  Mathematics, Springer-Verlag, Berlin, second~ed., 2006.
\newblock Structure-preserving algorithms for ordinary differential equations.

\bibitem{Nicol}
{\sc P.~Heidelberger and D.~M. Nicol}, {\em Conservative parallel simulation of
  continuous time markov chains using uniformization}, IEEE Trans. Parallel
  Distrib. Syst., 4 (1993), pp.~906--921.

\bibitem{jahnke}
{\sc T.~Jahnke and D.~Alt{\i}ntan}, {\em Efficient simulation of discrete
  stochastic reaction systems with a splitting method}, BIT, 50 (2010),
  pp.~797--822.

\bibitem{KL}
{\sc C.~Kipnis and C.~Landim}, {\em Scaling Limits of Interacting Particle
  Systems}, Springer-Verlag, 1999.

\bibitem{Korniss99}
{\sc G.~Korniss, M.~A. Novotny, and P.~A. Rikvold}, {\em Parallelization of a
  dynamic {M}onte {C}arlo algorithm: {A} partially rejection-free conservative
  approach}, J. Comp. Phys., 153 (1999), pp.~488--508.

\bibitem{Kurtz}
{\sc T.~G. Kurtz}, {\em A random {T}rotter product formula}, Proc. Amer. Math.
  Soc., 35 (1972), pp.~147--154.

\bibitem{binder}
{\sc D.~P. Landau and K.~Binder}, {\em A Guide to {M}onte {C}arlo Simulations
  in Statistical Physics}, Cambridge University Press, Cambridge, 2000.

\bibitem{Liggett}
{\sc Thomas~M. Liggett}, {\em Interacting Particle Systems}, vol.~276 of
  Grundlehren der mathematischen {W}issenschaften, Springer-Verlag, New York,
  Berlin, Heidelberg, Tokyo, 1985.

\bibitem{evans09}
{\sc Da-Jiang Liu and J.~W. Evans}, {\em {Atomistic and multiscale modeling of
  CO-oxidation on Pd(100) and Rh(100): From nanoscale fluctuations to mesoscale
  reaction fronts}}, {Surf. Science}, {603} ({2009}), pp.~{1706--1716}.

\bibitem{Lubachevsky88}
{\sc B.~D. Lubachevsky}, {\em Efficient parallel simulations of dynamic {I}sing
  spin systems}, J. Comput. Phys., 75 (1988), pp.~103--122.

\bibitem{MerickFichthorn07}
{\sc M.~Merrick and K.~A. Fichthorn}, {\em Synchronous relaxation algorithm for
  parallel kinetic {M}onte {C}arlo simulations of thin film growth}, Phys. Rev.
  E, 75 (2007), p.~011606.

\bibitem{ShimAmar09}
{\sc G.~Nandipati, Y.~Shim, J.~G. Amar, A.~Karim, A.~Kara, T.~S. Rahman, and
  O.~Trushin}, {\em Parallel kinetic {M}onte {C}arlo simulations of {A}g(111)
  island coarsening using a large database}, Journal of Physics Condensed
  Matter, 21 (2009), p.~084214.

\bibitem{SPPARKS}
{\sc S.~Plimpton, C.~Battaile, M.~Chandross, L.~Holm, A.~Thompson, V.~Tikare,
  G.~Wagner, E.~Webb, X.~Zhou, C.~Garcia Cardona, and A.~Slepoy}, {\em
  {Crossing the Mesoscale No-Man's Land via Parallel Kinetic Monte Carlo}},
  Tech. Report SAND2009-6226, Sandia National Laboratory, 2009.

\bibitem{Reuter1}
{\sc K~Reuter, D~Frenkel, and M~Scheffler}, {\em {The steady state of
  heterogeneous catalysis, studied by first-principles statistical mechanics}},
  {Physical Review Letters}, {93} ({2004}).

\bibitem{ShimAmar05}
{\sc Y.~Shim and J.~G. Amar}, {\em Rigorous synchronous relaxation algorithm
  for parallel kinetic {M}onte {C}arlo simulations of thin film growth}, Phys.
  Rev. B, 71 (2005), p.~115436.

\bibitem{ShimAmar05b}
\leavevmode\vrule height 2pt depth -1.6pt width 23pt, {\em Semirigorous
  synchronous relaxation algorithm for parallel kinetic {M}onte {C}arlo
  simulations of thin film growth}, Phys. Rev. B, 71 (2005), p.~125432.

\bibitem{Szabo}
{\sc G.~Szabo and G.~Fath}, {\em {Evolutionary games on graphs}}, {Physics
  Reports}, {446} ({2007}), pp.~{97--216}.

\bibitem{Trotter}
{\sc H.~F. Trotter}, {\em On the product of semi-groups of operators}, Proc.
  Amer. Math. Soc., 10 (1959), pp.~545--551.

\end{thebibliography}
\end{document}